\newcounter{sat}
\newcommand*{\@old@slash}{}\let\@old@slash\slash
\def\slash{\relax\ifmmode\delimiter"502F30E\mathopen{}\else\@old@slash\fi}
\DeclareRobustCommand*\cal{\@fontswitch\relax\mathcal}
\newcommand*\numbercircled[1]{\tikz[baseline=(char.base)]{
            \node[shape=circle,draw,inner sep=1pt] (char) {#1};}}
\newtheoremstyle{plaindotless}{}{}{\itshape}{}{\bfseries}{}{1em}{}
\theoremstyle{plaindotless}
\newtheorem{thm}[sat]{Theorem}
\newtheorem{lem}[sat]{Lemma}
\newtheorem{cor}[sat]{Corollary}
\newtheorem{pro}[sat]{Proposition}
\newtheoremstyle{definitiondotless}{}{}{}{}{\bfseries}{}{1em}{}
\theoremstyle{definitiondotless}
\newtheorem{dfi}[sat]{Definition}
\newtheorem{exa}[sat]{Example}
\newtheorem{rem}[sat]{Remark}
\providecommand{\inj}{\hookrightarrow}
\providecommand{\Z}{\ensuremath{\mathbb{Z}}}
\providecommand{\Q}{\ensuremath{\mathbb{Q}}}
\providecommand{\eg}{\mbox{e.\,g.}\xspace}
\providecommand{\ie}{\mbox{i.\,e.}\xspace}
\DeclareMathOperator{\im}{im}
\DeclareMathOperator{\hm}{Hom}
\newcommand{\smallucarre}{\, \xy <0cm,-.05cm> ;<.12cm,-.05cm>: 
(0,0) ; (0,1) **\dir{-},
(1,1) ; (0,1) **\dir{-}, \endxy \, }
\newcommand{\smalldcarre}{ \xy <0cm,-.05cm> ;<.12cm,-.05cm>: 
(0,0) ; (1,0) **\dir{-},
(1,1) ; (1,0) **\dir{-}, \endxy \, }
\newcommand{\carre}{\, \xy <0cm,-.10cm> ;<.18cm,-.10cm>: 
(0,0) ; (1,0) **\dir{-},
(0,0) ; (0,1) **\dir{-},
(1,1) ; (1,0) **\dir{-},
(1,1) ; (0,1) **\dir{-}, \endxy \, }
\newcommand{\Id}{\ensuremath{1}}
\newcommand{\unit}{\ensuremath{1}}
\newcommand{\catid}{\ensuremath{1}}
\newcommand{\Idtransf}{\ensuremath{1}}
\newcommand{\D}{\ensuremath{\mathbb{D}}}
\newcommand{\pt}{\ensuremath{\star}}
\newcommand{\one}{\ensuremath{\mathbb{1}}}
\newcommand{\ehm}[2]{\ensuremath{\langle#1,#2\rangle}}
\newcommand{\tw}[1]{\ensuremath{\mathrm{tw}(#1)}}
\newcommand{\disc}[1]{\ensuremath{\mathrm{2tw}(#1)}}
\newcommand{\eqrel}{\ensuremath{\sim}}
\newcommand{\emor}[1]{\ensuremath{\mathrm{end}(#1)}}
\newcommand{\plho}{{-}}
\newcommand{\Fun}{\ensuremath{\mathbf{Fun}_{\mathrm{lax}}}}
\DeclareMathOperator{\Cha}{char}
\DeclareMathOperator{\rad}{rad}
\newcommand{\op}{\ensuremath{\circ}}
\newcommand\restr[2]{{
  \left.\kern-\nulldelimiterspace 
  #1 
  \right|_{#2} 
  }}
\title{Traces in monoidal derivators, and homotopy colimits}
\author{Martin Gallauer Alves de Souza}\thanks{The author was
  supported by the Swiss National Science Foundation (SNSF)}
\address{Institut für Mathematik, Universität Zürich, Switzerland}
\email{martin.gallauer@math.uzh.ch}
\subjclass[2000]{18D10, 
18E30, 
18D15, 
55P42
}
\keywords{monoidal category, monoidal derivator, triangulated derivator, trace,
  homotopy colimit, EI-category}
\begin{document}
\begin{abstract}
  \noindent{}A variant of the trace in a monoidal category is given in
  the setting of closed monoidal derivators, which is applicable to
  endomorphisms of fiberwise dualizable objects. Functoriality of this
  trace is established. As an application, an explicit formula is
  deduced for the trace of the homotopy colimit of endomorphisms over
  finite categories in which all endomorphisms are invertible. This
  result can be seen as a generalization of the additivity of traces
  in monoidal categories with a compatible triangulation.
\end{abstract}
\maketitle{}
\section{Introduction}
\label{sec:intro}

\subsection{The additivity of traces.}

Let ${\cal C}$ be a symmetric monoidal category which in addition is
triangulated. Examples include various ``stable homotopy categories''
(such as the classical and equivariant in algebraic topology, the
motivic in algebraic geometry) or all kinds of ``derived categories''
(of modules, of perfect complexes on a scheme, etc.). Let $X$, $Y$ and
$Z$ be dualizable objects in ${\cal C}$,
\begin{equation*}
  D:\qquad X\to Y \to Z\to^{+}
\end{equation*}
a distinguished triangle, and $f$ an endomorphism of $D$. The
\emph{additivity of traces} is the statement that the following
relation holds among the traces of the components of
$f$:
\begin{equation}\label{eq:triangle}
  \mathrm{tr}(f_{Y})=\mathrm{tr}(f_{X})+\mathrm{tr}(f_{Z}).
\end{equation}
Well-known examples are the additivity of the Euler characteristic of
finite CW\nobreakdash-com\-plexes ($\chi(Y)=\chi(X)+\chi(Y/X)$ for
$X\subset Y$ a subcomplex) or the additivity of traces in short exact
sequences of finite dimensional vector spaces. The additivity of
traces should be considered as a \emph{principle}: Although incorrect
as it stands, it embodies an important idea. One should therefore try
to find the right context to formulate this idea precisely and prove
it.

In \cite{may_additivity}, J.\,Peter May made an important step in this
direction. He gave a list of axioms expressing a compatibility of the
monoidal and the triangulated structure, and proved that if they are
satisfied, then one can always replace $f$ by an endomorphism $f'$
with $f'_{X}=f_{X}$ and $f'_{Y}=f_{Y}$ such that~(\ref{eq:triangle})
holds for the components of $f'$. This result has two drawbacks
though: Firstly, there is this awkwardness of $f'$ replacing $f$, and
secondly, the axioms are rather complicated.

As noted in~\cite{GrothPontoShulman-additivity}, both these drawbacks
are related to the well-known deficiencies of triangulated
categories. Since the foremost example of a situation in which May's
compatibility axioms hold, is when $\mathcal{C}$ is the homotopy
category of a stable monoidal model category, it should not come as a
surprise that May's result can be reproved in the setting of
triangulated derivators. Moreover, since triangulated derivators
eliminate some of the problems encountered in triangulated categories,
a more satisfying formulation of the additivity of traces should be
available. We will describe it now.

Let $\D$ be a closed symmetric monoidal triangulated
derivator\footnote{See section~\ref{sec:ntt} for the definition of
  this notion.}, and
$\smallucarre$ the free category on the following graph:
\begin{equation}\label{eq:pushout-category}
  \xymatrix{(1,1)&(0,1)\ar[l]\\
    (1,0)\mathrlap{.}\ar[u]}
\end{equation}
Let $A$ be an object of $\D(\smallucarre)$ with underlying diagram
\begin{equation*}
  \xymatrix{X\ar[r]\ar[d]&Y\\
    0\mathrlap{,}}
\end{equation*}
and suppose that both $X$ and $Y$ are dualizable objects of $\D(\pt)$,
$\pt$ denoting the terminal category. Let $f$ be an endomorphism of
$A$, and denote by $p_{\smallucarre}$ the unique functor
$\smallucarre\to \pt$. Then there is a distinguished triangle
\begin{equation*}
  X\to Y\to p_{\smallucarre!}A\to^{+}
\end{equation*}
in $\D(\pt)$, $p_{\smallucarre!}A$ is also dualizable, and the
following relation holds:
\begin{equation*}
  \mathrm{tr}(f_{(0,1)})=\mathrm{tr}(f_{(1,1)})+\mathrm{tr}(p_{\smallucarre!}f).
\end{equation*}
This is the main theorem of~\cite{GrothPontoShulman-additivity}.

\subsection{The trace of the homotopy colimit.}

Another advantage of the formulation in the context of derivators is
that it immediately invites us to consider the additivity of traces as
a mere instance of a more general principle. As a first step, we see
that the condition $A_{(1,0)}=0$ is not essential. Indeed, if $A$ is
an object of $\D(\smallucarre)$ whose fibers are all dualizable
objects in $\D(\pt)$ and if $f$ is an endomorphism of $A$ then the
formula above generalizes to
\begin{equation}\label{eq:hty-pushout-result}
  \mathrm{tr}(p_{\smallucarre!}f)=\mathrm{tr}(f_{(0,1)})+\mathrm{tr}(f_{(1,0)})-\mathrm{tr}(f_{(1,1)}).
\end{equation}
And now, in the second step, it is natural to replace the category
$\smallucarre$ by other categories $I$ and try to see whether there
still is an explicit formula for $\mathrm{tr}(p_{I!}f)$. The main
result of the present article states that this is the case for finite
EI-categories, \ie{} finite categories in which all endomorphisms are
invertible (such as groups or posets), provided that the derivator is
$\Q$-linear. For each of these categories the trace of the homotopy
colimit of an endomorphism of a fiberwise dualizable object can be
computed as a linear combination of ``local traces'' (depending only
on the fibers of the endomorphism and the action of the automorphisms
of the objects in the category) with coefficients which depend only on
the category and can be computed combinatorially.

As for the proof of this result, the idea is to define the trace of
endomorphisms of objects not only living in $\D(\pt)$ but in $\D(I)$
for general categories $I$. This trace should contain enough
information to relate the trace of the homotopy colimit to the local
traces of the endomorphism. The naive approach of considering $\D(I)$
as a monoidal category and taking the usual notion of the trace
doesn't lead too far though since few objects in $\D(I)$ will be
dualizable in general even if in $\D(\pt)$ all of them are; in other
words, being fiberwise dualizable does not imply being dualizable.

This is why we will replace the ``internal'' tensor product by an
``external product''
\begin{equation*}
  \boxtimes:\D(I)\times\D(I)\to \D(I\times I)
\end{equation*}
and the internal hom by an ``external hom''
\begin{equation*}
  \ehm{\plho}{\plho}:\D(I)^{\op}\times\D(I)\to \D(I^{\op}\times I),
\end{equation*}
which has the property that for any object $A$ of $\D(I)$ and objects
$i,j$ of $I$
\begin{equation*}
  \ehm{A}{A}_{(i,j)}=[A_{i},A_{j}]
\end{equation*}
(implying that fiberwise dualizable objects will be ``dualizable with
respect to the external hom'') and which also contains enough
information to compute $[A,A]$ (among other desired formal
properties). As soon as this bifunctor is available we can mimic the
usual definition of the ``internal'' trace in a closed symmetric
monoidal category to define an ``external'' trace for any endomorphism
of a fiberwise dualizable object, replacing the internal by the
external hom everywhere. It will turn out that this new trace encodes
all local traces, and in good cases allows us to relate these to the
trace of the homotopy colimit, thus yielding the sought after formula.

A ``general additivity theorem'' for traces, supposedly similar to the
main result in this article, has been obtained by Kate Ponto and
Michael Shulman. However, their proof should be quite different from
the one presented here, relying on the technology of bicategorical
traces (personal communication, December 2012).

\subsection{Outline of the paper.}

We do not include an introduction to the theory of derivators (see for
this the references given in section~\ref{par:derivator}). However, as
the definition of a derivator varies in the literature we give the
axioms we use in section~\ref{par:derivator}. Moreover, the few
results on derivators we need in the article are either proved or
justified by a reference to where a proof can be found. In
section~\ref{sec:monder} we define the notion of a (closed) monoidal
derivator and describe its relation to the axiomatization available in
the literature. We also discuss briefly linear structures on
derivators (\ref{sec:linder}), triangulated derivators
(\ref{sec:trder}), and the interplay between triangulated and monoidal
structures on derivators (\ref{sec:trmonder}). Apart from this,
section~\ref{sec:ntt} is meant to fix the notation used in the
remainder of the article.

The main body of the text starts with section~\ref{sec:ehom} where the
construction of the external hom mentioned above is given. The proofs
of the desired formal properties of this bifunctor are lengthy and not
needed in the rest of the article so they are deferred to
appendix~\ref{sec:app1}. Next we define the external trace
(section~\ref{sec:trace-dfi}) and prove its functoriality
(section~\ref{sec:trace-nat}). As a corollary we deduce that this
trace encodes all local traces.

The main result is to be found in section~\ref{sec:formula}. First we
prove that in good cases the trace of the homotopy colimit is a
function of the external trace (again, the uninteresting part of the
story is postponed to the appendix; specifically to
appendix~\ref{sec:app2}). In the case of finite EI-categories and a
$\Q$-linear triangulated derivator, this function can be made
explicit, and this leads to the formula for the trace of the homotopy
colimit in terms of the local traces. Some technical hypotheses used
to prove this result will be eliminated in section~\ref{sec:q}.

At several points in the article the need arises for an explicit
description of an additive derivator evaluated at a finite
group. Although this description is certainly well-known, we haven't
been able to find it in the literature and have thus included it as
appendix~\ref{sec:group}.

\section*{Acknowledgments}
Joseph Ayoub not only suggested to me that there should be an explicit
formula for the trace of the homotopy colimit of an endomorphism in
the setting of a closed symmetric monoidal triangulated derivator but
also contributed several crucial ideas and constructions. In
particular, the important definition of the ``external hom'' mentioned
above is due to him. I am also grateful to Michael Shulman for
pointing out that an earlier formula was too simplistic. Finally, I
was lucky to receive a very competent and detailed report by the
referee which led to many improvements in the article. I would like to
thank him or her warmly.

\section{Conventions and preliminaries}
\label{sec:ntt}

In this section, we recall some notions and facts (mostly related to
derivators) and fix the notation used in the remainder of the article.
\subsection{}By a 2-category we mean a strict 2-category. The
2-category of (small) categories is denoted by $\mathbf{CAT}$
($\mathbf{Cat}$). Given a 2-category $\mathcal{C}$ (encompassing the
special case of a category), $\mathcal{C}^{\op}$ denotes the
2-category with the same objects, and ${\cal C}^{\op}(x,y)={\cal
  C}(y,x)$ for all objects $x,y$. The 2-category
$\mathcal{C}^{\op,\op}$ also has the same objects as ${\cal C}$ but
${\cal C}^{\op,\op}(x,y)={\cal C}(y,x)^{\op}$
(see~\cite[p.~82]{Kelly-Street}). The (possibly large) sets of
objects, 1-morphisms and 2-morphisms in a 2-category $\mathcal{C}$ are
sometimes denoted by $\mathcal{C}_{0}$, $\mathcal{C}_{1}$, and
$\mathcal{C}_{2}$ respectively.

By a 2-functor we mean a \emph{strict} 2-functor between
2-categories. Modifications are morphisms of lax natural
transformations between 2-functors
(see~\cite[p.~82]{Kelly-Street}). For fixed 2-categories $\mathcal{C}$
and $\mathcal{D}$, the 2-functors from $\mathcal{C}$ to
$\mathcal{D}$ together with lax natural transformations and
modifications form a 2-category
$\Fun(\mathcal{C},\mathcal{D})$.

\subsection{}\label{par:cat-ntt-constr}Counits and units of adjunctions are usually denoted by
$\mathrm{adj}$. Given a functor $u:I\to J$, and an object $j\in
J_{0}$, the category of \emph{objects $u$-under $j$} is (abusively)
denoted by $j\backslash I$ and the category of \emph{objects $u$-over
  $j$} by $I/j$ (see~\cite[2.6]{MacLane71-cat-work-math}). We
also need the following construction
(\cite[p.~223]{MacLane71-cat-work-math}): Given a category $I$, we
define the \emph{twisted arrow category} associated to $I$, denoted by
$\tw{I}$, as having objects the arrows of $I$ and as morphisms from $i\to
j$ to $i'\to j'$ pairs of morphisms making the following square in $I$
commutative:
\begin{equation*}
  \xymatrix{i\ar[r]&j\ar[d]\\
    i'\ar[u]\ar[r]&j'\mathrlap{.}}
\end{equation*}
There is a canonical functor $\tw{I}\to I^{\op}\times I$. In fact,
this extends canonically to a functor
$\tw{\plho}:\mathbf{Cat}\to\mathbf{Cat}$ together with a natural
transformation $\tw{\plho}\to(\plho)^{\op}\times (\plho)$.

\subsection{}\label{par:derivator}
Let us recall the notion of a derivator. For the basic theory we refer
to \cite{maltsiniotis01-intro-derivators}, \cite{cisinski-neeman},
\cite{groth_pt_stable_derivator}. For an outline of the history of the
subject see~\cite[p.~1385]{cisinski-neeman}.

A full sub-2-category $\mathbf{Dia}$ of $\mathbf{Cat}$ is called a
\emph{diagram category} if:
\begin{enumerate}[label=(Dia\arabic*)]
\item $\mathbf{Dia}$ contains the totally ordered set
  $\underline{2}=\{0<1\}$;
\item $\mathbf{Dia}$ is closed under finite products and coproducts,
  and under taking the opposite category and
  subcategories;\label{dia:closed}
\item if $I\in\mathbf{Dia}_{0}$ and $i\in I_{0}$, then $I/i\in
  \mathbf{Dia}_{0}$;
\item if $p:I\to J$ is a fibration (to be understood in the sense
  of~\cite[exposé~VI]{sga1}) whose fibers are all in $\mathbf{Dia}$,
  and if $J\in\mathbf{Dia}_{0}$, then also
  $I\in\mathbf{Dia}_{0}$.
\end{enumerate}
By~\ref{dia:closed}, the initial category $\emptyset$ and the terminal
category $\pt$ are both in $\mathbf{Dia}$. We will often use that
$\mathbf{Dia}$ is closed under pullbacks (as follows
from~\ref{dia:closed}). The smallest diagram category consists of
finite posets, other typical examples include finite categories,
finite-dimensional categories, all small posets or $\mathbf{Cat}$
itself.

A \emph{prederivator (of type $\mathbf{Dia}$)} is a 2-functor
$\D:\mathbf{Dia}^{\op,\op}\to\mathbf{CAT}$ from a diagram category
$\mathbf{Dia}$ to $\mathbf{CAT}$. If $\D$ is fixed in a context,
$\mathbf{Dia}$ always denotes the domain of $\D$.  Given a
prederivator $\D$, categories $I, J\in\mathbf{Dia}_{0}$ and a functor
$u:I\to J$, we denote by $u^{*}:\D(J)\to\D(I)$ the value of $\D$ at
$u$; if $u$ is clear from the context, we sometimes denote $u^{*}$ by
$\restr{}{I}$. Its left and right adjoint (if they exist) are denoted
by $u_{!}$ and $u_{*}$ respectively. The unique functor $I\to\pt$ is
denoted by $p_{I}$. Given an object $i\in I_{0}$, we denote also by
$i:\pt\to I$ the functor pointing $i$. Thus for an object
$A\in\D(I)_{0}$ and a morphism $f\in\D(I)_{1}$, their \emph{fiber
  over} $i$ is $i^{*}A$ and $i^{*}f$, respectively, sometimes also
denoted by $A_{i}$ and $f_{i}$, respectively. Given a natural
transformation $\eta:u\to v$ in $\mathbf{Dia}$, we denote by
$\eta^{*}$ the value of $\D$ at $\eta$. It is a natural transformation
from $v^{*}$ to $u^{*}$. In particular, if $h:i\to j$ is an arrow in
$I$ then we can consider it as a natural transformation from the
functor $i:\pt\to I$ to $j:\pt\to I$, and therefore it makes sense to
write $h^{*}$; evaluated at an object $A\in\D(I)_{0}$, it yields a
morphism of the fibers $A_{j}\to A_{i}$. The canonical ``underlying
diagram'' functor $\D(I)\to\mathbf{CAT}(I^{\op},\D(\pt))$ is denoted
by $\mathrm{dia}_{I}$. Finally, if $\D$ is a prederivator and
$J\in\mathbf{Dia}_{0}$, we denote by $\D_{J}$ the prederivator
$\D_{J}(\plho)=\D(\plho\times J)$.

A \emph{derivator (of type $\mathbf{Dia}$)} is a prederivator (of type
$\mathbf{Dia}$) $\D$ satisfying the following list of axioms:
\begin{enumerate}[label=(D\arabic*), series=derivator]
\item $\D$ takes arbitrary coproducts to products up to equivalence of
  categories.\label{der:coprod}
\item For every $I\in \mathbf{Dia}_{0}$, the family of functors
  $i^{*}:\D(I)\to\D(\pt)$ indexed by $I_{0}$ is jointly conservative.\label{der:conservative}
\item For all functors $u\in\mathbf{Dia}_{1}$, the left and right
  adjoints $u_{!}$ and $u_{*}$ to $u^{*}$ exist.
\item Given a functor $u:I\to J$ in $\mathbf{Dia}$ and an object $j\in
  J_{0}$, the ``Beck-Chevalley'' transformations associated to both comma
  squares
  \begin{align*}
    \xymatrix{j\backslash I\ar[r]^{t}\ar[d]_{p_{j\backslash I}}&I\ar[d]^{u}\\
      \pt\ar[r]_{j}&J\xtwocell[-1,-1]{}\omit}&&\text{and}&&
    \xymatrix{\xtwocell[1,1]{}\omit I/j\ar[r]^{s}\ar[d]_{p_{I/j}}&I\ar[d]^{u}\\
      \pt\ar[r]_{j}&J}
  \end{align*}
  are invertible: $p_{j\backslash
    I!}t^{*}\xrightarrow{\sim}j^{*}u_{!}$ and
  $j^{*}u_{*}\xrightarrow{\sim}p_{I/j*}s^{*}$.\label{der:kanextptwise}
\end{enumerate}
The derivator $\D$ is called \emph{strong} if in addition
\begin{enumerate}[resume*=derivator]
\item For every $J\in\mathbf{Dia}_{0}$, the functor
  $\mathrm{dia}_{\underline{2}}:\D_{J}(\underline{2})\to\mathbf{CAT}(\underline{2}^{\op},\D_{J}(\pt))$
  is full and essentially surjective.
\end{enumerate}

As an important example, if ${\cal M}$ is a model category then the
association
\begin{align*}
  \D^{{\cal M}}:\mathbf{Cat}^{\op,\op}&\longrightarrow \mathbf{CAT}\\
  I&\longmapsto {\cal M}^{I^{\op}}[{\cal W}_{I}^{-1}]
\end{align*}
defines a strong derivator, where ${\cal M}^{I^{\op}}[{\cal
  W}_{I}^{-1}]$ denotes the category obtained from ${\cal
  M}^{I^{\op}}$ by formally inverting those morphisms of presheaves
which are pointwise weak equivalences. This result is due to
Denis-Charles Cisinski (see~\cite{cisinski-imagesdirectes}). If $\D$
is a (strong) derivator and $J\in\mathbf{Dia}_{0}$ then also $\D_{J}$
is a (strong) derivator.

One consequence of the axioms we shall often have occasion to refer to
is the following result on (op)fibrations:
\begin{lem}\label{lem:fibration-exact}
  Given a derivator $\D$ of type $\mathbf{Dia}$ and given a pullback
  square
\begin{equation*}
  \label{eq:hty_exact_square}
  \xymatrix{\ar[r]^{w}\ar[d]_{v}&\ar[d]^{u}\\
    \ar[r]_{x}&}
\end{equation*}
in $\mathbf{Dia}$ with either $u$ a fibration or $x$ an opfibration,
the canonical ``Beck-Chevalley'' transformation
\begin{equation*}\label{eq:beck-chevalley}
  v_{!}w^{*}\longrightarrow x^{*}u_{!} \qquad \text{(or,
    equivalently, }
  u^{*}x_{*}\longrightarrow w_{*}v^{*}\text{)}
\end{equation*}
is invertible.
\end{lem}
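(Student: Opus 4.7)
The plan is to reduce to a pointwise statement using (D2), apply (D4) to both sides to obtain expressions as Kan extensions over comma categories, and then use the cartesian lifting property of $u$ to collapse these comma categories to strict fibers. By a standard 2-categorical manipulation the two forms of the Beck-Chevalley map are equivalent, so it suffices to treat $v_{\#}w^{\ast}\to x^{\ast}u_{\#}$ when $u$ is a fibration; the opfibration case then follows by passage to opposites, which preserves $\mathbf{Dia}$ by~\ref{dia:closed}, sends pullbacks to pullbacks, and exchanges fibrations with opfibrations.

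Fix $y\in K_{0}$ in the source of $x:K\to J$. By (D2), it suffices to check invertibility after applying $y^{\ast}$. The right-hand side becomes $(xy)^{\ast}u_{\#}\cong p_{xy\backslash I,\#}\,t^{\ast}$ by (D4), with $t:xy\backslash I\to I$ the canonical functor. For the left-hand side, the pullback $v$ of the fibration $u$ is again a fibration (with the same fibers, which therefore lie in $\mathbf{Dia}$ by hypothesis), so (D4) applied to $v$ at $y$ gives $y^{\ast}v_{\#}w^{\ast}\cong p_{y\backslash P,\#}\,(wt')^{\ast}$, where $t':y\backslash P\to P$ and $P$ is the pullback. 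Tracing through the construction, the Beck-Chevalley natural transformation is induced by the evident functor $F:y\backslash P\to xy\backslash I$ sending $(k,i,\alpha:y\to k)$ to $(i,x(\alpha))$.

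The key step is to show that $F$ induces an invertible map on these left Kan extensions. For this, exhibit the inclusion $J_{u}:u^{-1}(xy)\hookrightarrow xy\backslash I$, $i\mapsto(i,\mathrm{id}_{xy})$, as a left adjoint: its right adjoint sends $(i,\beta:xy\to u(i))$ to the source $\beta^{\ast}i$ of the chosen cartesian lift $\widetilde{\beta}:\beta^{\ast}i\to i$ of $\beta$ along $u$; the counit is $\widetilde{\beta}$ and the unit is the identity. The analogous inclusion $J_{v}:v^{-1}(y)\hookrightarrow y\backslash P$ is a left adjoint for the same reason, and the canonical identification $v^{-1}(y)=u^{-1}(xy)$ is compatible with the inclusions into $I$. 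Invoking the standard fact that a left adjoint $\ell:A\to B$ is cofinal in any derivator, i.e., $p_{A,\#}\,\ell^{\ast}\xrightarrow{\sim}p_{B,\#}$, both sides of the Beck-Chevalley map collapse to $p_{u^{-1}(xy),\#}$ applied to the restriction of the argument along $u^{-1}(xy)\hookrightarrow I$; the comparison $F$ restricts to the identity on the strict fibers, so the two collapsed expressions agree, and the induced isomorphism is precisely the Beck-Chevalley map.

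The main obstacle is the cofinality of left adjoints in an abstract derivator. This is a standard result I would either invoke from the derivator literature (Maltsiniotis, Cisinski--Neeman) or derive directly from (D4) via the observation that the slice $\ell/b$ of a left adjoint $\ell\dashv r$ has the terminal object $(r(b),\epsilon_{b})$, so that $p_{\ell/b,\#}$ evaluates at this terminal object; combined with (D4) this identifies $b^{\ast}\ell_{\#}$ with evaluation at $r(b)$. The remainder of the argument is organized bookkeeping of cartesian lifts and comma-category constructions, without further conceptual difficulty.
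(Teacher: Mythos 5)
The paper does not prove this lemma --- it simply refers to \cite[\nopp 1.30]{groth_pt_stable_derivator} and \cite[\nopp 2.7]{heller-homotopytheories} --- and your argument is, in substance, the standard proof given there: reduce pointwise by~\ref{der:conservative}, rewrite both sides via~\ref{der:kanextptwise} as Kan extensions over the comma categories $y\backslash P$ and $xy\backslash I$, check that the Beck--Chevalley map is induced by the comparison functor $F$ (pasting of mates), and observe that the strict fiber includes into each comma category as a left adjoint (the cartesian lifts supplying the right adjoint and counit), so that both sides collapse to $p_{u^{-1}(xy)\#}$ of the restriction to the fiber. The dualization step (opposite derivator $I\mapsto\D(I^{\op})^{\op}$, transposing the square, and switching between the two mates) is also the standard reduction of the opfibration case. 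Note that the fibers lie in $\mathbf{Dia}$ simply because $\mathbf{Dia}$ is closed under subcategories by~\ref{dia:closed}, not ``by hypothesis''.

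The one place you should be careful is your fallback derivation of the finality statement $p_{A\#}\ell^{*}\xrightarrow{\sim}p_{B\#}$ for a left adjoint $\ell\dashv r$. With the paper's conventions, axiom~\ref{der:kanextptwise} applied to $\ell$ at $b$ computes $b^{*}\ell_{\#}$ over the coslice $b\backslash A=\{(a,\,b\to\ell a)\}$, which is \emph{not} the category $\ell/b=\{(a,\,\ell a\to b)\}$ in which you exhibit the terminal object $(r(b),\epsilon_{b})$; moreover $b^{*}\ell_{\#}$ is not the functor whose invertibility you need. So that sentence, as written, proves the wrong statement over the wrong comma category. The fact you need is nonetheless true and has a cleaner proof than either of your suggestions: applying the 2-functor $\D$ to the adjunction $\ell\dashv r$ yields an adjunction $\ell^{*}\dashv r^{*}$, hence $\ell^{*}\cong r_{\#}$ by uniqueness of left adjoints, and therefore $p_{A\#}\ell^{*}\cong p_{A\#}r_{\#}\cong (p_{A}r)_{\#}=p_{B\#}$; one then checks this isomorphism is the canonical comparison map. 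With that substitution your argument is complete.
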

For a proof see~\cite[1.30]{groth_pt_stable_derivator} or
\cite[2.7]{heller-homotopytheories}.

\subsection{}\label{sec:monder}
By a \emph{monoidal category} we always mean a symmetric unitary
monoidal category. \emph{Monoidal functors} between monoidal
categories are functors which preserve the monoidal structure up to
(coherent) natural isomorphisms; in the literature, these are
sometimes called \emph{strong} monoidal functors. \emph{Monoidal
  transformations} are natural transformations preserving the monoidal
structure in an obvious way. We thus arrive at the 2-category of
monoidal categories $\mathbf{MonCAT}$. The monoidal product is always
denoted by $\otimes$ and the unit by $\one$. If internal hom functors
exist, we arrive at its closed variant $\mathbf{ClMonCAT}$. (Notice
that functors between closed categories are not required to be
closed. In other words, $\mathbf{ClMonCAT}$ is a \emph{full}
sub-2-category of $\mathbf{MonCAT}$.) The internal hom functor is
always denoted by $[\plho,\plho]$.

A \emph{(closed) monoidal prederivator (of type $\mathbf{Dia}$)} is a
prederivator with a factorization
\begin{equation*}
  \D:\mathbf{Dia}^{\op,\op}\to\mathbf{(Cl)MonCAT}\to \mathbf{CAT},
\end{equation*}
where $\mathbf{(Cl)MonCAT}\to \mathbf{CAT}$ is the forgetful
functor. (Closed) monoidal prederivators were also discussed
in~\cite[2.1.6]{ayoub07-thesis-1} and \cite{Groth-monder}.

Let us now define the ``external product'' mentioned in the
introduction. Given a monoidal prederivator $\D$ and categories $I$,
$J$ in $\mathbf{Dia}$ we define the bifunctor
\begin{align*}
  \boxtimes:\D(I)\times\D(J)&\to\D(I\times J)\\
  (A,B)&\mapsto \restr{A}{I\times J}\otimes \restr{B}{I\times J}.
\end{align*}
Given two functors $u:I'\to I$ and $v:J'\to J$ in $\mathbf{Dia}$,
$A\in \D(I)_{0}$ and $B\in \D(J)_{0}$, we define a morphism
\begin{equation}\label{eq:eprod-nat}
  (u\times v)^{*}(A\boxtimes B)\to u^{*}A\boxtimes v^{*}B
\end{equation}
as the composition
\begin{align*}
  (u\times v)^{*}(\restr{A}{I\times J}\otimes \restr{B}{I\times J})&\xrightarrow{\sim}(u\times
  v)^{*}\restr{A}{I\times J}\otimes (u\times v)^{*}\restr{B}{I\times J}\\
  &= \restr{(u^{*}A)}{I'\times J'}\otimes \restr{(v^{*}B)}{I'\times J'}.
\end{align*}
Hence we see that~(\ref{eq:eprod-nat}) is in fact an isomorphism, and
it is clear that it is also natural in $A$ and $B$. Putting these and
similar properties together one finds that the external product
defines a pseudonatural transformation of 2-functors
\begin{equation*}
   \D\times \D\to \D\circ (\plho\times \plho),
\end{equation*}
\ie{} a 1-morphism in
$\Fun(\mathbf{Dia}^{\op,\op}\times\mathbf{Dia}^{\op,\op},\mathbf{CAT})$
with invertible 2-cell components.

Now fix a monoidal prederivator $\D$, a functor $u:I\to
J\in\mathbf{Dia}_{1}$, and $A\in \D(I)_{0}$, $B\in\D(J)_{0}$. We can
define the \emph{projection morphism}
\begin{equation}\label{eq:projection}
  u_{!}(A\otimes u^{*}B)\to u_{!}A\otimes B
\end{equation}
by adjunction as the composition
\begin{equation*}
  A\otimes u^{*}B\xrightarrow{\mathrm{adj}}u^{*}u_{!}A\otimes u^{*}B\xleftarrow{\sim} u^{*}(u_{!}A\otimes B).
\end{equation*}
It is clearly natural in $A$ and $B$. Fix a second functor $v:I'\to
J'$ in $\mathbf{Dia}$ and consider the following morphism
($A\in\D(I)_{0}$, $B\in\D(I')_{0}$):
\begin{equation}\label{eq:mon-der-external}
  (u\times v)_{!}(A\boxtimes B)\to u_{!}A\boxtimes v_{!}B,
\end{equation}
obtained by adjunction from
\begin{equation*}
  A\boxtimes B\xrightarrow{\mathrm{adj}} u^{*}u_{!}A\boxtimes
  v^{*}v_{!}B\xleftarrow{\sim}(u\times v)^{*}(u_{!}A\boxtimes
  v_{!}B).
\end{equation*}
Of course, it is also natural in $A$ and $B$.

\begin{lem}\label{lem:mon-der} Let $\D$ be a monoidal prederivator which
  satisfies the axioms of a derivator. Then the following conditions
  are equivalent:
  \begin{enumerate}
  \item The projection morphism~(\ref{eq:projection}) is invertible
    for all $u=p_{I}$, $I\in\mathbf{Dia}_{0}$.\label{lem:mon-der.proj}
  \item The projection morphism~(\ref{eq:projection}) is invertible
    for all fibrations $u$ in $\mathbf{Dia}$.\label{lem:mon-der.proj-fib}
  \item (\ref{eq:mon-der-external}) is invertible for all
    $u,v\in\mathbf{Dia}_{1}$.\label{lem:mon-der.external}
  \end{enumerate}
  If $\D$ is a closed monoidal prederivator then
  condition~\ref{lem:mon-der.proj-fib} is also equivalent to each of the
  following ones:
  \begin{enumerate}[resume]
  \item $u^{*}[B,B']\to [u^{*}B,u^{*}B']$ is invertible for all
    fibrations $u$ in $\mathbf{Dia}$.\label{lem:mon-der.closed}
  \item $[u_{!}A,B]\to u_{*}[A,u^{*}B]$ is invertible for all
    fibrations $u$ in $\mathbf{Dia}$.\label{lem:mon-der.adjunction}
  \end{enumerate}
\end{lem}

\begin{dfi} A \emph{(closed) monoidal derivator} is a (closed)
  monoidal prederivator which satisfies the axioms of a derivator as
  well as the equivalent conditions of Lemma~\ref{lem:mon-der}.
\end{dfi}

\begin{proof}[Proof of Lemma~\ref{lem:mon-der}.]
  Assume condition~\ref{lem:mon-der.proj}. Let $u:I\to J$ be a
  fibration in $\mathbf{Dia}$ and consider, for any $j\in J_{0}$, the
  following pullback square:
  \begin{equation*}
    \xymatrix{I_{j}\ar[r]^{w}\ar[d]_{p_{I_{j}}}&I\ar[d]^{u}\\
      \pt\ar[r]_{j}&J\mathrlap{.}}
  \end{equation*}
  Since $u$ is a fibration the base change morphism
  $p_{I_{j}!}w^{*}\to j^{*}u_{!}$ is an isomorphism, by
  Lemma~\ref{lem:fibration-exact}. Hence for any $A\in\D(I)_{0}$,
  $B\in\D(J)_{0}$, all vertical morphisms in the commutative diagram
  below are invertible:
  \begin{equation*}
    \xymatrix{j^{*}u_{!}(A\otimes u^{*}B)\ar[r]&j^{*}(u_{!}A\otimes B)\ar[d]\\\ar[u]
      p_{I_{j}!}w^{*}(A\otimes u^{*}B)\ar[d]&j^{*}u_{!}A\otimes
      j^{*}B\\
      p_{I_{j}!}(w^{*}A\otimes
      w^{*}u^{*}B)\ar@{=}[d]&p_{I_{j}!}w^{*}A\otimes j^{*}B\ar@{=}[d]\ar[u]\\
      p_{I_{j}!}(w^{*}A\otimes p_{I_{j}}^{*}j^{*}B)\ar[r]&p_{I_{j}!}w^{*}A\otimes j^{*}B\mathrlap{.}}
  \end{equation*}
  By assumption, the bottom horizontal arrow is an isomorphism hence
  so is the top one. Condition~\ref{lem:mon-der.proj-fib} now follows
  from~\ref{der:conservative}.

  For condition~\ref{lem:mon-der.external}, write $u\times v= (u\times
  \catid)\circ (\catid\times v)$ hence by symmetry of the monoidal
  product we reduce to the case where $u=\catid_{I}$, $v:J'\to J$. We
  use~\ref{der:conservative}, thus let $i\in I_{0}$, $j\in J_{0}$. The
  fiber of~(\ref{eq:mon-der-external}) over $(i,j)$ is easily seen to
  be the following composition ($w$ denotes the fibration $i\backslash
  I\times j\backslash J'\to i\backslash I$):
  \begin{align*}
    (i,j)^{*}(\catid_{I}\times v)_{!}(\restr{A}{I\times
        J'}\otimes\restr{B}{I\times
        J'})\xleftarrow{\sim}&\ p_{i\backslash I!}w_{!}(\restr{A}{i\backslash I\times
        j\backslash J'}\otimes\restr{B}{i\backslash I\times
        j\backslash
        J'})\\
      \xrightarrow{\sim}&\ p_{i\backslash I!}(\restr{A}{i\backslash I}\otimes w_{!}\restr{B}{i\backslash I\times
        j\backslash J'})\\
      \xrightarrow{\sim}&\ p_{i\backslash I!}(\restr{A}{i\backslash I}\otimes
      p_{i\backslash I}^{*}p_{j\backslash J'!}\restr{B}{j\backslash
        J'})\\
      \xrightarrow{\sim}&\ p_{i\backslash I!}\restr{A}{i\backslash I}\otimes
      p_{j\backslash J'!}\restr{B}{j\backslash J'}\\
      \xrightarrow{\sim}&\ i^{*}A\otimes j^{*}v_{!}B\\
      \xleftarrow{\sim}&\ (i,j)^{*}(\restr{A}{I\times
        J}\otimes \restr{(v_{!}B)}{I\times J})
  \end{align*}
  The first, the third and the fifth arrows come
  from~\ref{der:kanextptwise}, while the second and the fourth are
  invertible by condition~\ref{lem:mon-der.proj-fib}, the last one is
  clearly invertible.

  Putting $u=p_{I}$, $v=\catid_{\pt}$ in
  condition~\ref{lem:mon-der.external}, one obtains precisely
  condition~\ref{lem:mon-der.proj}. This finishes the proof of the
  first statement in the lemma.

  From now on we assume that $\D$ is a closed monoidal
  prederivator. For condition~\ref{lem:mon-der.closed}, notice that
  $u^{*}\circ [B,\plho]\to [u^{*}B,\plho]\circ u^{*}$ corresponds via
  the adjunctions
  \begin{equation*}
    u_{!}\circ (\plho\otimes u^{*}B)\dashv [u^{*}B,\plho]\circ
    u^{*}\quad\text{and}\quad (\plho\otimes B)\circ u_{!}\dashv u^{*}\circ[B,\plho]
  \end{equation*}
  to the projection morphism
  \begin{equation*}
    u_{!}(\plho\otimes u^{*}B)\to u_{!}\plho\otimes B.
  \end{equation*}
  And similarly, the morphism $[u_{!}A,\plho]\to u_{*}\circ
  [A,\plho]\circ u^{*}$ corresponds via the adjunctions
  \begin{equation*}
     u_{!}A\otimes\plho\dashv [u_{!}A,\plho]\quad\text{and}\quad
    u_{!}\circ(A\otimes \plho)\circ u^{*}\dashv u_{*}\circ [A,\plho]\circ u^{*}
  \end{equation*}
  to the projection morphism
  \begin{equation*}
    u_{!}(A\otimes u^{*}\plho)\to u_{!}A\otimes \plho.
  \end{equation*}
  Hence conditions~\ref{lem:mon-der.closed} and
  \ref{lem:mon-der.adjunction} are both equivalent to
  condition~\ref{lem:mon-der.proj-fib}. (For more details,
  see~\cite[2.1.144, 2.1.146]{ayoub07-thesis-1}.)
\end{proof}

In contrast to this, in a closed monoidal prederivator, the canonical
morphism
\begin{equation}\label{eq:pbpf-adjunction}
  [A,u_{*}B]\to u_{*}[u^{*}A,B]
\end{equation}
is \emph{always} invertible, even if $u$ is not a fibration.

If $\D$ is a (strong) derivator of type $\mathbf{Cat}$ then
precomposition with the 2-functor
$(\plho)^{\op}:\mathbf{Cat}^{\op}\to\mathbf{Cat}^{\op,\op}$ defines a
(strong) derivator $\overline{\D}$ in the sense
of~\cite{GrothPontoShulman-additivity}, and conversely starting with a
(strong) derivator in their sense, precomposition with $(\plho)^{\op}$
yields a (strong) derivator of type $\mathbf{Cat}$. By
Lemma~\ref{lem:mon-der}, $\D$ being monoidal corresponds to
$\overline{\D}$ being symmetric
monoidal. By~\cite[8.8]{GrothPontoShulman-additivity} then, $\D$ being
closed monoidal corresponds to $\overline{\D}$ being closed symmetric
monoidal. In particular, \cite[9.13]{GrothPontoShulman-additivity}
establishes that if ${\cal M}$ is a symmetric monoidal cofibrantly
generated model category then the induced derivator $\D^{{\cal M}}$ is
a closed monoidal, strong derivator (of type $\mathbf{Cat}$).

Again, if $\D$ is a (closed) monoidal derivator, then so is $\D_{J}$
for any $J\in\mathbf{Dia}_{0}$.

\subsection{}\label{sec:linder}
A few words on linear structures on derivators
(see~\cite[section~3]{Groth-monder} for the details). An
\emph{additive derivator} is a derivator $\D$ such that $\D(\pt)$ is
an additive category. It follows that $\D(I), u^{*}, u_{*}, u_{!}$
are additive for all $I\in\mathbf{Dia}_{0}$,
$u\in\mathbf{Dia}_{1}$. We define $R_{\D}$ to be the unital ring
$\D(\pt)\left(\one,\one\right)$.

If $\D$ is additive and monoidal, then $R_{\D}$ is a commutative ring
and $\D(I)$ is canonically endowed with an $R_{\D}$-linear structure
for any $I\in\mathbf{Dia}_{0}$, making $u^{*}, u_{*}, u_{!}$ all
$R_{\D}$-linear functors, $u\in\mathbf{Dia}_{1}$. Given $f\in
\D(I)\left(A,B\right)$, $\lambda\in R_{\D}$, $\lambda f$ is defined by
\begin{equation*}
  A\xleftarrow{\sim} p_{I}^{*}\one\otimes A\xrightarrow{p_{I}^{*}\lambda\otimes f}p_{I}^{*}\one\otimes B\xrightarrow{\sim} B.
\end{equation*}

\subsection{}\label{sec:trder} We now recall the notion of a
triangulated derivator. Let $\boxempty$ be the partially ordered set
considered as a category:
\begin{equation*}
  \xymatrix{(1,1)&(0,1)\ar[l]\\
    (1,0)\ar[u]&(0,0)\ar[u]\ar[l]\mathrlap{,}}
\end{equation*}
and $\smalldcarre$ the full subcategory defined by the complement of
$(1,1)$. Thus there are two canonical embeddings
$i_{\smalldcarre}:\smalldcarre\to\boxempty$ and
$i_{\smallucarre}:\smallucarre\to\boxempty$. We say that an object
$A\in\D(\boxempty)_{0}$ is cartesian (resp.\ cocartesian) if the unit
\begin{equation*}
  A\to i_{\smalldcarre*}i_{\smalldcarre}^{*}A\qquad (\text{resp.\ 
  the counit } i_{\smallucarre!}i_{\smallucarre}^{*}A\to A)
\end{equation*}
is an isomorphism.

A \emph{triangulated derivator} is a strong derivator $\D$ such that
$\D(\pt)$ is pointed and objects in $\D(\carre)$ are cartesian if and
only if they are cocartesian. If ${\cal M}$ is a stable model
category, then the derivator $\D^{{\cal M}}$ associated to ${\cal M}$
is triangulated. Also, if $\D$ is a triangulated derivator then so is
$\D_{J}$ for any $J\in\mathbf{Dia}_{0}$. The name comes from the fact
that any triangulated derivator factors canonically through the
forgetful functor $\mathbf{TrCAT}\to\mathbf{CAT}$ from triangulated
categories to $\mathbf{CAT}$. In particular, every triangulated
derivator is additive. This result is due to Georges Maltsiniotis
(\cite[Théorème~1]{maltsiniotis-k-theory-derivator}; see
also~\cite[4.15, 4.19]{groth_pt_stable_derivator}), and the
triangulated structure is given explicitly. We will need the
description of it on $\D(\pt)$. (The description on $\D(I)$ can then
be deduced by replacing $\D$ by $\D_{I}$.) Thus given an object
$A\in\D(\pt)_{0}$ one defines canonically an object in $\D(\boxempty)$
with underlying diagram
\begin{equation*}
  \xymatrix{A\ar[r]\ar[d]&0\ar[d]\\
    0\ar[r]&\Sigma A\mathrlap{,}}
\end{equation*}
some $\Sigma A\in\D(\pt)_{0}$, as $i_{\smallucarre!}(1,1)_{*}A$. Then
we can define the suspension functor $\Sigma:\D(\pt)\to\D(\pt)$ as
$(0,0)^{*}i_{\smallucarre!}(1,1)_{*}$. Moreover, if we denote by
$\boxbar$ the partial order considered as a category
\begin{equation*}
  \xymatrix{(2,1)&\ar[l](1,1)&(0,1)\ar[l]\\
    (2,0)\ar[u]&\ar[l](1,0)\ar[u]&(0,0)\ar[u]\ar[l]\mathrlap{,}}
\end{equation*}
there are three canonical embeddings $i:\boxempty\to\boxbar$ and we
say that an object $A\in\D(\boxbar)_{0}$ is a triangle if
$A_{(2,0)}\cong A_{(0,1)}\cong 0$ and $i^{*}A$ is (co)cartesian for
all three embeddings. It then follows that one has a canonical
isomorphism $A_{(0,0)}\cong\Sigma A_{(2,1)}$ (see the proof of
Lemma~\ref{lem:mon-triangulated} below) and therefore a triangle in
$\D(\pt)$:
\begin{equation}\label{eq:d.t.}
  A_{(2,1)}\to A_{(1,1)}\to A_{(1,0)}\to \Sigma A_{(2,1)}.
\end{equation}
The distinguished triangles are those isomorphic to one of the
form of~(\ref{eq:d.t.}).

\subsection{}
\label{sec:trmonder}We are also interested in some aspects of the
interplay between monoidal and triangulated structures on derivators.

\begin{dfi}
  A \emph{(closed) monoidal triangulated derivator} is a (closed)
  monoidal and triangulated derivator.
\end{dfi}

Under the correspondence $\D\leftrightsquigarrow \overline{\D}$ above,
a closed monoidal triangulated derivator of type $\mathbf{Cat}$
corresponds to a ``closed symmetric monoidal, strong, stable
derivator'' in~\cite{GrothPontoShulman-additivity}. Translating the
results in~\cite{GrothPontoShulman-additivity} back to our setting we
see that every such derivator factors canonically through
$\mathbf{ClMonTrCAT}$, the 2-category of closed monoidal categories
with a ``compatible'' triangulation (in the sense
of~\cite{may_additivity}), such that the following diagram commutes:
\begin{equation*}
  \xymatrix@R=10pt{&&\mathbf{TrCAT}\ar[rd]\\
    \mathbf{Dia}^{\op,\op}\ar[r]&\mathbf{ClMonTrCAT}\ar[ru]\ar[rd]&&\mathbf{CAT}\mathrlap{.}\\
    &&\mathbf{ClMonCAT}\ar[ur]}
\end{equation*}
Here, it is understood that following the path on the upper part of
the diagram yields the canonical factorization of the triangulated
derivator, while the path through the lower part yields the
factorization of the monoidal prederivator. All we will need from this
statement is the following lemma.
\begin{lem}[{\cite[4.1, 4.8]{GrothPontoShulman-additivity}}]\label{lem:mon-triangulated}
  Let $\D$ be a monoidal triangulated derivator and
  $I\in\mathbf{Dia}_{0}$. Then the monoidal product
  $\otimes:\D(I)\times\D(I)\to\D(I)$ is canonically triangulated in
  both variables.
\end{lem}
\begin{proof} 
  First of all, replacing $\D$ by $\D_{I}$ we reduce to the case
  $I=\pt$. Moreover, by symmetry of the monoidal product we may fix
  $B\in\D(\pt)_{0}$ and only prove $\plho\otimes B$ to be
  triangulated. Then the condition that the projection morphism
  $p_{J!}(A\otimes p_{J}^{*}B)\to p_{J!}A\otimes B$ be invertible for
  all $A\in \D(J)_{0}$ in the case of a finite discrete category $J$
  says precisely that $\plho\otimes B$ is additive.

  The following claim is also a consequence of our definition of a
  monoidal derivator:
  \begin{itemize}
  \item[(*)] Let $A\in\D(\boxempty)_{0}$ be a cocartesian object. Then
    also $A\boxtimes B$ is cocartesian.
  \end{itemize}
  Indeed, this follows from the following factorization of the counit
  morphism:
  \begin{align*}
    i_{\smallucarre!}i_{\smallucarre}^{*}(A\boxtimes
    B)&\xrightarrow{\sim}
    i_{\smallucarre!}(i_{\smallucarre}^{*}A\boxtimes B)\\
    &\xrightarrow[\sim]{(\ref{eq:mon-der-external})}i_{\smallucarre!}i_{\smallucarre}^{*}A\boxtimes
    B\\
    &\xrightarrow[\sim]{\mathrm{adj}}A\boxtimes B.
  \end{align*}

  Now let $A\in\D(\pt)_{0}$ be an arbitrary object and consider
  $C=i_{\smallucarre!}(1,1)_{*}A\in\D(\boxempty)_{0}$. Since
  $i_{\smallucarre!}$ is fully faithful (this is an easy computation;
  see \cite[7.1]{cisinski-neeman}), $C$ is cocartesian, and by (*),
  this is also true of $C\boxtimes B$. Moreover $(C\boxtimes
  B)_{(1,0)}\cong C_{(1,0)}\otimes B\cong 0$ and, similarly,
  $(C\boxtimes B)_{(0,1)}\cong 0$. It follows from the following claim
  (**) that $\Sigma(A\otimes B)\cong \Sigma(C_{(1,1)}\otimes B)$ is
  isomorphic to $C_{(0,0)}\otimes B\cong \Sigma A\otimes B$, naturally
  in $A$.
  \begin{itemize}
  \item[(**)] Let $A\in\D(\boxempty)_{0}$ be a cocartesian object with
    $A_{(1,0)}\cong A_{(0,1)}\cong 0$. Then there is a canonical
    isomorphism $\Sigma (A_{(1,1)})\cong A_{(0,0)}$, natural in $A$.
  \end{itemize}
  The condition that those fibers vanish implies that the counit of
  adjunction $i_{\smallucarre}^{*}A\to
  (1,1)_{*}(1,1)^{*}i_{\smallucarre}^{*}A$ is invertible (again, an
  easy computation, cf.~\cite[8.11]{cisinski-neeman}). But the left
  hand side becomes isomorphic to $A$ after applying
  $i_{\smallucarre!}$ by assumption, so we get the required
  isomorphism after applying $(0,0)^{*}i_{\smallucarre!}$.

  Now let $D$ be a distinguished triangle in $\D(\pt)$, \ie{}
  $D$ is associated to a triangle
  $A\in\D(\boxbar)_{0}$. Essentially by (*), $A\boxtimes B$ is again a
  triangle, and essentially by (**), the distinguished triangle
  associated to $A\boxtimes B$ is isomorphic to $D\otimes B$.
\end{proof}

\subsection{}\label{par:basic-diagram}For $I$ an object of
$\mathbf{Cat}$, throughout the article we fix the notation as in the
following diagram where both squares are pullback squares:
\begin{equation}\label{eq:pullbackdiagram}\tag{$\Delta_{I}$}\hypertarget{delta-diagram}{}
  \xymatrix{\disc{I}\ar[r]^-{r_{2}}\ar[d]_{r_{1}}&\tw{I}^{\op}\ar[d]^{q_{2}}\\
    \tw{I}\ar[r]^-{q_{1}}&I^{\op}\times
    I\ar[r]^-{p_{2}}\ar[d]_{p_{1}}&I\ar[d]^{p_{I}}\\
    &I^{\op}\ar[r]^{p_{I^{\op}}}&\pt\mathrlap{.}}
\end{equation}
Explicitly, the objects of $\disc{I}$
are pairs of arrows in $I$ of the form
\begin{equation*}
  \xymatrix{i\ar@<.5ex>[r]&i'\ar@<.5ex>[l]\mathrlap{,}}
\end{equation*}
and morphisms from this object to
$\xymatrix{j\ar@<.5ex>[r]&j'\ar@<.5ex>[l]}$ are pairs of morphisms
$(i\leftarrow j, i'\to j')$ rendering the following two squares
commutative:
\begin{align*}
  \xymatrix{i\ar[r]&\ar[d]i'\\
    j\ar[r]\ar[u]&j'\mathrlap{,}}&&  \xymatrix{i&\ar[d]i'\ar[l]\\
    j\ar[u]&\ar[l]j'\mathrlap{.}}
\end{align*}
Note that if $I$ lies in some diagram category then so does the
whole diagram~(\ref{eq:pullbackdiagram}).
\section{External hom}
\label{sec:ehom}

Fix a closed monoidal derivator $\D$ of type $\mathbf{Dia}$. As
explained in the introduction we would like to define an ``external
hom'' functor which will play an essential role in the definition of
the trace. It should behave with respect to the external product as
does the internal hom with respect to the internal product (\ie{} the
monoidal structure). As a first indication of its nature, the external
hom of $A\in \D(I)_{0}$ and $B\in\D(J)_{0}$ should be an object of
$\D(I^{\op}\times J)$, denoted by $\ehm{A}{B}$. Additionally, we would
like the fibers of $\ehm{A}{B}$ to compute the internal hom of the
fibers of $A$ and $B$, because fiberwise dualizability should imply
dualizability with respect to $\ehm{\plho}{\plho}$; moreover, $[A,B]$
should be expressible in terms of $\ehm{A}{B}$ in the case
$I=J$. These and other desired properties of the external product are
satisfied by the following construction which is due to Joseph Ayoub.

Given small categories $I$ and $J$ in $\mathbf{Dia}$, we fix the
following notation, all functors being the obvious ones:
\begin{equation*}\tag{\ensuremath{\Pi_{I,J}}}\label{eq:ehomdiagram}\hypertarget{pi-diagram}{}
  \xymatrix{\tw{I}\times J\ar[r]^-{r}\ar[rd]^{q}\ar[d]_{p}&J\\
  I^{\op}\times J&I\mathrlap{.}}
\end{equation*}
For any $A$ in $\D(I)_{0}$ and
$B$ in $\D(J)_{0}$ set
\begin{equation*}
  \ehm{A}{B}:=p_{*}[q^{*}A,r^{*}B].
\end{equation*}
This defines a bifunctor
\begin{equation*}
  \ehm{\plho}{\plho}:\D(I)^{\op}\times \D(J)\to \D(I^{\op}\times J),
\end{equation*}
whose properties we are going to list now. For the proofs the reader
is referred to appendix~\ref{sec:app1}.

\subsection*{Naturality}

For functors $u:I'\to I$ and $v:J'\to J$ in $\mathbf{Dia}$ there is an
invertible morphism
\begin{equation*}
  \Psi:(u^{\op}\times v)^{*}\ehm{A}{B}\xrightarrow{\sim}\ehm{u^{*}A}{v^{*}B},
\end{equation*}
natural in $A\in \D(I)_{0}$, $B\in \D(J)_{0}$. Moreover, $\Psi$
behaves well with respect to functors and natural transformations in
$\mathbf{Dia}$. In other words, $\ehm{\plho}{\plho}$ defines a
1-morphism in $\Fun(\mathbf{Dia}^{\op}\times
\mathbf{Dia}^{\op,\op}, \mathbf{CAT})$ from
$\D(\plho)^{\op}\times\D(\plho)$ to $\D(\plho^{\op}\times\plho)$ with
invertible 2-cell components (\ie{} a pseudonatural transformation).

\subsection*{Internal hom}
In the case $I=J$ there is an invertible morphism
\begin{equation*}
  \Theta:[A,B]\xrightarrow{\sim}
  p_{2*}q_{2*}q_{2}^{*}\ehm{A}{B}\qquad\text{(with the notation
    of~(\ref{eq:pullbackdiagram})),}
\end{equation*}
natural in $A$ and $B\in\D(I)_{0}$. Moreover, for any functor $u:I'\to
I$ in $\mathbf{Dia}$, the canonical arrow $u^{*}[A,B]\to
[u^{*}A,u^{*}B]$ is compatible with $\Psi$ via $\Theta$. In other
words, $\Theta$ defines an invertible 2-morphism in
$\Fun((\mathbf{Dia}_{\leq 1})^{\op},\mathbf{CAT})$ between
1-morphisms from $\D(\plho)^{\op}\times\D(\plho)$ to
$\D(\plho)$. Here, $\mathbf{Dia}_{\leq 1}$ is the 2-subcategory of
$\mathbf{Dia}$ obtained by removing all non-identity 2-cells.

\subsection*{External product}
Given categories $I_{(k)}$, $k=1,\ldots,4$, in $\mathbf{Dia}$, $A_{k}\in
\D(I_{(k)})_{0}$, there is a morphism
\begin{equation*}
  \Xi:\ehm{A_{1}}{A_{2}}\boxtimes \ehm{A_{3}}{A_{4}}\to \tau^{*}\ehm{A_{1}\boxtimes A_{3}}{A_{2}\boxtimes A_{4}},
\end{equation*}
natural in all four arguments, where
\begin{equation*}
  \tau:I_{(1)}^{\op}\times
  I_{(2)}\times I_{(3)}^{\op}\times I_{(4)}\to I_{(1)}^{\op}\times
  I_{(3)}^{\op}\times I_{(2)}\times I_{(4)}
\end{equation*}
interchanges the two categories in the middle. Moreover, $\Xi$ is
compatible with $\Psi$ and~(\ref{eq:eprod-nat}). In other words, it
defines a 2-morphism in
$\Fun(\mathbf{Dia}^{\op}\times\mathbf{Dia}^{\op,\op}\times\mathbf{Dia}^{\op}\times\mathbf{Dia}^{\op,\op},
\mathbf{CAT})$ between 1-morphisms from
$\D(\plho)^{\op}\times\D(\plho)\times\D(\plho)^{\op}\times\D(\plho)$
to $\D(\plho^{\op}\times\plho\times\plho^{\op}\times\plho)$.

\subsection*{Adjunction}
Given three categories in $\mathbf{Dia}$, there is an invertible
morphism
\begin{equation*}
  \Omega: \ehm{A}{\ehm{B}{C}}\xrightarrow{\sim}\ehm{A\boxtimes B}{C},
\end{equation*}
natural in all three arguments. Moreover, $\Omega$ is compatible with
$\Psi$ and~(\ref{eq:eprod-nat}). In other words, it defines an
invertible 2-morphism in
$\Fun(\mathbf{Dia}^{\op}\times\mathbf{Dia}^{\op}\times\mathbf{Dia}^{\op,\op},\mathbf{CAT})$
between 1-morphisms from
$\D(\plho)^{\op}\times\D(\plho)^{\op}\times\D(\plho)$ to
$\D(\plho^{\op}\times\plho^{\op}\times\plho)$.

\subsection*{Biduality}
For fixed $B\in \D(\pt)_{0}$, there is a morphism
\begin{equation*}
  \Upsilon:A\xrightarrow{} \ehm{\ehm{A}{B}}{B},
\end{equation*}
natural in $A\in\D(I)_{0}$. Moreover, $\Upsilon$ defines a 2-morphism
in $\Fun(\mathbf{Dia}^{\op,\op}, \mathbf{CAT})$ between
1-endomorphisms of $\D$.

\subsection*{Normalization}
Given $J\in\mathbf{Dia}_{0}$, there is an invertible morphism
\begin{align*}
  \Lambda:[p_{J}^{*}A,B]\xrightarrow{\sim}\ehm{A}{B},
\end{align*}
natural in $A\in\D(\pt)_{0}$ and $B\in \D(J)_{0}$. Again, $\Lambda$ is
compatible with $v^{*}$ for any $v:J'\to J$ in $\mathbf{Dia}$,
therefore it defines an invertible 2-morphism in
$\Fun(\mathbf{Dia}^{\op,\op}, \mathbf{CAT})$ between
1-morphisms from $\D(\pt)^{\op}\times\D(\plho)$ to
$\D(\plho)$. Moreover under this identification, all the morphisms in
the statements of the previous properties reduce to the canonical
morphisms in closed monoidal categories. (These morphisms are made
explicit in appendix~\ref{sec:app1}; see
p.~\pageref{sec:ehom-properties-proof-normalization}.)

\section{Definition of the trace}\label{sec:trace-dfi}
Recall that in a closed monoidal category ${\cal C}$, an object $A$ is
called \emph{dualizable} (sometimes also \emph{strongly dualizable})
if the canonical morphism
\begin{equation}\label{eq:dualizability}
  [A,\one]\otimes B\to [A,\one\otimes B]
\end{equation}
is invertible for all $B\in\mathcal{C}_{0}$, and in this case one
defines a \emph{coevaluation}
\begin{equation}\label{eq:coev-classical}
  \mathrm{coev}:\one\xrightarrow{\mathrm{adj}}[A,\one\otimes A]\xleftarrow{\sim} [A,\one]\otimes
  A.
\end{equation}
It has the characterizing property that the following diagram commutes
(see~\cite[1.4]{lewis-may}):
\begin{equation}\label{eq:coev-ev}
  \xymatrix@C=4pc{\ar[d]_{\sim}[A,\one]\otimes
    A\ar[r]^-{\mathrm{ev}}&\one\ar[d]^{\sim}\\
    [[A,\one]\otimes A,\one]\ar[r]_-{[\mathrm{coev},\one]}&[\one,\one]\mathrlap{.}}
\end{equation}
Here the vertical morphism on the left is defined as the composition
\begin{equation}
  \label{eq:delta}
  [A,\one]\otimes A\to [A,\one]\otimes [[A,\one],\one]\to [A\otimes
  [A,\one],\one\otimes\one]\xrightarrow{\sim}[[A,\one]\otimes A,\one],
\end{equation}
while the one on the right is
\begin{equation*}
  \one\xrightarrow{\mathrm{adj}}[\one,\one\otimes\one]\xrightarrow{\sim}[\one,\one].
\end{equation*}
$[A,\one]$ is called the \emph{dual} of $A$, and is often denoted by
$A^{\ast}$. Dualizability of $A$ implies that the canonical morphism
\begin{equation}
  \label{eq:bidual-classical}
  A\to (A^{*})^{*}
\end{equation}
is invertible.

For dualizable $A$, the \emph{trace map}
\begin{equation*}
  \mathrm{tr}:\mathcal{C}(A,A)\to \mathcal{C}(\one, \one)
\end{equation*}
sends an endomorphism $f$ to the composition
\begin{equation*}
  \one\xrightarrow{\mathrm{coev}} A^{\ast}\otimes
  A\xrightarrow{\catid\otimes f}A^{\ast}\otimes A\xrightarrow{\mathrm{ev}}\one.
\end{equation*}
More generally, \cite{Maltsiniotis1995} and \cite{jsv1996traced}
independently introduced a \emph{(twisted) trace map} for any $S$ and
$T$ in ${\cal C}$ ($A$ still assumed dualizable),
\begin{equation*}
  \mathrm{tr}:\mathcal{C}(A\otimes S,A\otimes T)\to \mathcal{C}(S,T),
\end{equation*}
which sends a ``twisted endomorphism'' $f$ to the composition
\begin{equation*}
  S\xleftarrow{\sim} \one\otimes S\xrightarrow{\mathrm{coev}\otimes \catid} A^{\ast}\otimes
  A\otimes S\xrightarrow{\catid\otimes f}A^{\ast}\otimes A\otimes
  T\xrightarrow{\mathrm{ev}\otimes \catid}\one\otimes T\xrightarrow{\sim} T.
\end{equation*}

We will mimic this definition in our derivator setting. So fix a
closed monoidal derivator $\D$ of type $\mathbf{Dia}$. First of all,
here is our translation of dualizability:

\begin{dfi}
  Let $I\in \mathbf{Dia}_{0}$, $A\in \D(I)_{0}$. We say that $A$ is
  \emph{fiberwise dualizable} if $A_{i}$ is dualizable for all $i\in
  I_{0}$. The \emph{dual} of $A$ is defined to be
  $\ehm{A}{\one_{\D(\pt)}}\in\D(I^{\op})_{0}$, also denoted by $A^{\vee}$.
\end{dfi}

Let $I$ and $A$ as in the definition, $A$ fiberwise dualizable. Then,
as was the case for dualizable objects in closed monoidal categories,
the morphisms corresponding to~(\ref{eq:dualizability})
and~(\ref{eq:bidual-classical}) are invertible (for any
$B\in\D(I)_{0}$):
\begin{align}
  \label{eq:application}
  A^{\vee}\boxtimes
  B&\cong\ehm{A}{\one}\boxtimes[p_{I}^{*}\one,B]\xrightarrow[\sim]{\Lambda}\ehm{A}{\one}\boxtimes\ehm{\one}{B}\xrightarrow[\sim]{\Xi}\ehm{A\boxtimes\one}{\one\boxtimes
    B}\cong\ehm{A}{B},\\
  \Upsilon&:A\xrightarrow{\sim} (A^{\vee})^{\vee}.
\end{align}
This follows from the naturality and the normalization properties of
the external hom. We now go about defining a coevaluation and an
evaluation morphism. This will rely on the results of the previous
section.

Using the relation between internal and external hom, we can consider
the composition
\begin{align*}
  \one_{\D(I)}\xrightarrow{\mathrm{adj}} [A,\one\otimes
  A]\xrightarrow{\sim}[A,A]\xrightarrow[\sim]{\Theta} p_{2*}q_{2*}q_{2}^{*}\ehm{A}{A}
\end{align*}
and, by adjunction, we obtain
\begin{align}\label{eq:coev-new}
  \mathrm{coev}:q_{2!}\one\to\ehm{A}{A}\xleftarrow[\sim]{(\ref{eq:application})}A^{\vee}\boxtimes A.
\end{align}

Next, inspired by~(\ref{eq:coev-ev}), we define the evaluation
morphism to be simply the dual of the coevaluation morphism. For this,
notice that $A$ being fiberwise dualizable implies that also
$A^{\vee}$ is. Hence there is an analogue of~\eqref{eq:delta}:
\begin{equation}\label{eq:ev-new-pre}
  A\boxtimes
  A^{\vee}\xrightarrow[\sim]{\Upsilon}(A^{\vee})^{\vee}\boxtimes
  A^{\vee}\xrightarrow[\sim]{\text{(\ref{eq:application})}}\ehm{A^{\vee}}{A^{\vee}}\xrightarrow[\sim]{\Omega}\ehm{A^{\vee}\boxtimes
  A}{\one_{\D(\pt)}}.
\end{equation}
Denote by $\mu:I\times I^{\op}\to I^{\op}\times I$ the canonical
isomorphism. Then we define
\begin{align*}
  \xymatrix@R=4pt@C=20pt{\mathllap{\mathrm{ev}:A^{\vee}\boxtimes A}\ar[r]^{\sim}&\mathrlap{\mu_{*}(A\boxtimes A^{\vee})}\\
  \ar[r]^{\eqref{eq:ev-new-pre}}_{\sim}&\mathrlap{\mu_{*}\ehm{A^{\vee}\boxtimes A}{\one}}\\
  \ar[r]^{\ehm{\mathrm{coev}}{\one}}&\mathrlap{\mu_{*}\ehm{q_{2!}\one}{\one}}\\
  \ar[r]^{\overline{\Psi}}& \mathrlap{\mu_{*}(q_{2})^{\op}_{*}\ehm{\one}{\one}}\\
  \ar[r]^{\sim}& \mathrlap{q_{1*}\one.}}
\end{align*}
Here, $\overline{\Psi}$ is obtained by adjunction from
$\Psi$:
\begin{align*}
  \xymatrix@R=4pt@C=15pt{\mathllap{\overline{\Psi}:\ehm{q_{2!}\one}{\one}}\ar[r]^{\mathrm{adj}}&\mathrlap{q_{2*}^{\op}q_{2}^{\op*}\ehm{q_{2!}\one}{\one}}\\
  \ar[r]_{\sim}^{\Psi}& \mathrlap{q_{2*}^{\op}\ehm{q_{2}^{*}q_{2!}\one}{\one}}\\
  \ar[r]^{\mathrm{adj}}&\mathrlap{q_{2*}^{\op}\ehm{\one}{\one}.}}
\end{align*}
It follows immediately that the following diagram commutes for any
$u:I'\to I$ in $\mathbf{Dia}$:
\begin{equation}
  \label{eq:ehom-pw}
  \xymatrix{(u\times
    u^{\op})^{*}\ehm{q_{2!}\one}{\one}\ar[r]\ar[d]_{\overline{\Psi}}&\ehm{q'_{2!}\tw{u}^{\op*}\one}{\one}\ar[d]^{\overline{\Psi}}\\
    (u\times u^{\op})^{*}q_{2*}^{\op}\ehm{\one}{\one}\ar[r]&q_{2*}'^{\op}\ehm{\tw{u}^{\op*}\one}{\one}\mathrlap{.}}
\end{equation}
In the sequel we will sometimes denote by the same symbol
$\overline{\Psi}$ other morphisms obtained by adjunction from $\Psi$
in a similar way. It is hoped that this will not cause any confusion.

Finally we can put all the pieces together and define the trace:
\begin{dfi}
  Let $I\in\mathbf{Dia}_{0}$, $A\in \D(I)_{0}$ fiberwise dualizable, and
  $S,T\in \D(I)_{0}$ arbitrary. Then we define the \emph{(twisted) trace map}
  \begin{equation*}
    \mathrm{Tr}:\D(I)\left(A\otimes S,A\otimes T\right)\to\D(I^{\op}\times I)\left(q_{2!}\one\otimes
    p_{2}^{*}S,q_{1*}\one\otimes p_{2}^{*}T\right)
  \end{equation*}
  as the association which sends a twisted endomorphism $f$ to the
  composition
  \begin{equation*}
   \xymatrix@C=40pt{q_{2!}\one\otimes
    p_{2}^{*}S\ar[r]^-{\mathrm{coev}\otimes\catid}&(A^{\vee}\boxtimes A) \otimes
    p_{2}^{*}S\ar[r]^-{\sim}& A^{\vee}\boxtimes (A\otimes
    S)\ar[d]^{\catid\boxtimes f}\\
    q_{1*}\one\otimes p_{2}^{*}T& (A^{\vee}\boxtimes A) \otimes
    p_{2}^{*}T\ar[l]^-{\mathrm{ev}\otimes\catid}&\ar[l]^-{\sim}A^{\vee}\boxtimes (A\otimes
    T)}
  \end{equation*}
  called the \emph{(twisted) trace of $f$}.
\end{dfi}

\begin{rem}\label{rem:components}
  Although defined in this generality, we will be interested mainly in
  traces of endomorphisms twisted by ``constant'' objects, \ie{}
  coming from objects in $\D(\pt)$. In this case
  ($S,T\in\D(\pt)_{0}$), the trace map is an association
  \begin{equation*}
    \D(I)
    \left(A\boxtimes S,A\boxtimes T\right)\to \D(I^{\op}\times
    I)
    \left(q_{2!}\one\boxtimes S,q_{1*}\one\boxtimes T\right).
  \end{equation*}

  Now, let $g$ be an element of the target of this map. It induces the
  composite
  \begin{equation*}
    q_{2!}\restr{S}{\tw{I}^{\op}}\xleftarrow{\sim}q_{2!}\one\boxtimes S\xrightarrow{g}q_{1*}\one
\boxtimes T\to q_{1*}\restr{T}{\tw{I}}
  \end{equation*}
  and by adjunction
  \begin{equation*}
    q_{2}^{*}\restr{S}{I^{\op}\times I}\to
    q_{2}^{*}q_{1*}q_{1}^{*}\restr{T}{I^{\op}\times
      I}\xrightarrow{\sim}
    r_{2*}r_{1}^{*}q_{1}^{*}\restr{T}{I^{\op}\times I}
  \end{equation*}
  or, by another adjunction, a morphism
  \begin{equation}\label{eq:input2}
    \restr{S}{\disc{I}}\to \restr{T}{\disc{I}}.
  \end{equation}
  Applying the functor $\mathrm{dia}_{\disc{I}}$ we obtain an element
  of
  \begin{equation*}
    \D(\pt)^{(\disc{I})^{\op}}\left(\mathrm{dia}_{\disc{I}}(\restr{S}{\disc{I}}),
    \mathrm{dia}_{\disc{I}}(\restr{T}{\disc{I}})\right)\cong\prod_{\pi_{0}(\disc{I})}\D(\pt)\left(S,T\right).
  \end{equation*}
  The component corresponding to $\gamma\in\pi_{0}(\disc{I})$ is called
  the \emph{$\gamma$-component of $g$}.
\end{rem}

\begin{lem}\label{lem:input_eq}
  Suppose that the following hypotheses are satisfied:
  \begin{enumerate}[label=(H\arabic*), series=hypotheses]
  \item\label{hyp:q_1} $q_{1*}\one\boxtimes A\to q_{1*}(\one\boxtimes
    A)$ is invertible for all $A\in\D(\pt)_{0}$;
  \item for each connected component $\gamma$ of $\disc{I}$, the
    functor $p_{\gamma}^{*}$ is fully faithful.\label{hyp:dia}
  \end{enumerate}
  Then the map
  \begin{equation*}
    \D(I^{\op}\times
    I)
    \left(q_{2!}\one\boxtimes S,q_{1*}\one\boxtimes T\right)\to\prod_{\pi_{0}(\disc{I})}\D(\pt)\left(S,T\right)
  \end{equation*}
  defined above is a bijection. In particular, any morphism
  $q_{2!}\one\boxtimes S\to q_{1*}\one\boxtimes T$ is uniquely
  determined by its $\gamma$-components, $\gamma\in\pi_{0}(\disc{I})$.
\end{lem}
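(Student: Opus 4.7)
The plan is to identify the map in question with a composition of natural bijections. First I would observe that the canonical functor $q_{1}:\tw{I}\to I^{\op}\times I$ is a Grothendieck opfibration: given $\alpha:i\to j$ in $\tw{I}$ and $(f,g):(i,j)\to(i',j')$ in $I^{\op}\times I$ (that is, $f:i'\to i$ and $g:j\to j'$ in $I$), the morphism $(f,g):\alpha\to g\alpha f$ is opcartesian by direct inspection. Consequently $q_{2}$, which equals $q_{1}^{\op}$ post-composed with the swap isomorphism $I\times I^{\op}\xrightarrow{\sim}I^{\op}\times I$, is a fibration. Thus \cref{cor:fibration} provides a natural projection-formula isomorphism $q_{2\#}\restr{S}{\tw{I}^{\op}}\xrightarrow{\sim}q_{2\#}\one\boxtimes S$, and \cref{lem:fibration-exact} applied to the upper pullback square of~\eqref{eq:pullbackdiagram} makes the base-change morphism $q_{2}^{*}q_{1*}\xrightarrow{\sim}r_{2*}r_{1}^{*}$ invertible.

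Combining these two isomorphisms with hypothesis~\ref{hyp:q_1} (which supplies $q_{1*}\one\boxtimes T\xrightarrow{\sim}q_{1*}\restr{T}{\tw{I}}$) and the adjunctions $q_{2\#}\dashv q_{2}^{*}$ and $r_{2}^{*}\dashv r_{2*}$, I would assemble the natural bijections
\begin{align*}
  \D(I^{\op}\times I)(q_{2\#}\one\boxtimes S,\,q_{1*}\one\boxtimes T)
  &\cong \D(I^{\op}\times I)(q_{2\#}\restr{S}{\tw{I}^{\op}},\,q_{1*}\restr{T}{\tw{I}})\\
  &\cong \D(\tw{I}^{\op})(\restr{S}{\tw{I}^{\op}},\,q_{2}^{*}q_{1*}\restr{T}{\tw{I}})\\
  &\cong \D(\tw{I}^{\op})(\restr{S}{\tw{I}^{\op}},\,r_{2*}\restr{T}{\disc{I}})\\
  &\cong \D(\disc{I})(\restr{S}{\disc{I}},\,\restr{T}{\disc{I}}),
\end{align*}
where I have used $r_{1}^{*}\restr{T}{\tw{I}}=\restr{T}{\disc{I}}$ in view of $p_{\disc{I}}=p_{\tw{I}}\circ r_{1}$.

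To conclude, I would decompose $\disc{I}=\coprod_{\gamma\in\pi_{0}(\disc{I})}\gamma$ and apply~\ref{der:coprod} to get $\D(\disc{I})(\restr{S}{\disc{I}},\restr{T}{\disc{I}})\cong\prod_{\gamma}\D(\gamma)(p_{\gamma}^{*}S,p_{\gamma}^{*}T)$; hypothesis~\ref{hyp:dia} then identifies each factor with $\D(\pt)(S,T)$. The only remaining task is to verify that the composite of all these bijections coincides with the map defined in \cref{rem:components}. This is a direct unravelling, since each step above is precisely the natural isomorphism used there — the two projection-formula inverses, the $q_{2\#}\dashv q_{2}^{*}$ adjunction, the base change, and the $r_{2}^{*}\dashv r_{2*}$ adjunction — and on each connected component $\gamma$ the inverse of $p_{\gamma}^{*}$ agrees with the evaluation $i^{*}$ at any $i\in\gamma$, which is exactly how the projection to $\pi_{0}(\disc{I})$ via $\mathrm{dia}_{\disc{I}}$ extracts components. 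I expect this compatibility check to be the only genuine bookkeeping step and foresee no deeper obstacle.
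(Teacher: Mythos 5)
Your proposal is correct and follows essentially the same route as the paper: hypothesis~\labelcref{hyp:q_1} (together with the projection formula for the fibration $q_{2}$, the base change along the upper pullback square of~\eqref{eq:pullbackdiagram}, and the two adjunctions) identifies the source with $\D(\disc{I})(\restr{S}{\disc{I}},\restr{T}{\disc{I}})$, and then \ref{der:coprod} plus \labelcref{hyp:dia} identify this with the product over $\pi_{0}(\disc{I})$. The paper compresses the first half into one sentence and phrases the second half as a commutative square involving $\mathrm{dia}_{\disc{I}}$, but the ingredients and their roles are identical to yours.
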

\begin{proof}
  \ref{hyp:q_1} implies that the morphism $g:q_{2!}\one\boxtimes
  S\to q_{1*}\one\boxtimes T$ in the remark above can equivalently be
  described by~(\ref{eq:input2}). Moreover, the following square commutes:
  \begin{equation*}
    \xymatrix@C=45pt{\D(\disc{I})(\restr{S}{\disc{I}},\restr{T}{\disc{I}})\ar[r]^-{\mathrm{dia}_{\disc{I}}}\ar[d]_{\sim}&\D(\pt)^{\disc{I}^{\op}}(S_{\mathrm{cst}},T_{\mathrm{cst}})\ar[d]^{\sim}\\
    \displaystyle\prod_{\gamma\in\pi_{0}(\disc{I})}\D(\gamma)(\restr{S}{\gamma},\restr{T}{\gamma})&\displaystyle\prod_{\gamma\in\pi_{0}(\disc{I})}\D(\pt)(S,T)\ar[l]_-{(p_{\gamma}^{*})_{\gamma}}\mathrlap{.}}
  \end{equation*}
  Here, the left vertical arrow is invertible
  by~\ref{der:coprod}. \ref{hyp:dia} now implies that the horizontal
  arrow on the top is a bijection.
\end{proof}

  In particular we see that in favorable cases 
  (and these are the only ones we will have much to say about) the
  seemingly complicated twisted trace is encoded simply by a family of
  morphisms over the terminal category. The goal of the following
  section is to determine these morphisms.

\section{Functoriality of the trace}
\label{sec:trace-nat}
Our immediate goal is to describe the components $S\to
T\in\D(\pt)_{1}$ associated to the trace of a (twisted) endomorphism
of a fiberwise dualizable object as explained in the previous
section. However, we take the opportunity to establish a more general
functoriality property of the trace
(Proposition~\ref{pro:functoriality}).
Our immediate goal will be achieved as a corollary to this result.

Throughout this section we fix a category $I\in\mathbf{Dia}_{0}$. An
object of $\disc{I}$ is a pair of arrows
\begin{equation}\label{eq:dI-object}
  \xymatrix{i\ar@<.5ex>[r]^{h_{1}}&j\ar@<.5ex>[l]^{h_{2}}}
\end{equation}
in $I$ (cf.~\ref{par:basic-diagram}). There is always a
morphism in $\disc{I}$ from an object of the form
\begin{equation*}
  (i,h):\qquad \xymatrix{i\ar@<.5ex>[r]^{\catid_{i}}&i\ar@<.5ex>[l]^{h}}
\end{equation*}
to (\ref{eq:dI-object}), given by the pair of arrows
$(\catid_{i},h_{1})$ if $h=h_{2}h_{1}$. Hence we can take some of the
$(i,h)$ as representatives for $\pi_{0}(\disc{I})$ and it is sufficient
to describe the component $S\to T$ corresponding to $(i,h)$. This
motivates the following more general functoriality statement.

Let $u:I'\to I$ be a functor, $\eta:u\to u$ a natural transformation in
$\mathbf{Dia}$; consider the basic
diagram~(\ref{eq:pullbackdiagram}). Notice that this diagram is
functorial in $I$ hence there is a canonical morphism of diagrams
\hyperlink{delta-diagram}{$(\Delta_{I'})$}$\to$\hyperlink{delta-diagram}{$(\Delta_{I})$}
and we will use the convention that the arrows in
\hyperlink{delta-diagram}{$(\Delta_{I'})$} will be distinguished from
their $I$-counterparts by being decorated with a prime.
\begin{dfi}
  Let $S$, $T\in \D(I)_{0}$. Define a pullback map
  \begin{multline*}
    (u,\eta)^{*}:\D(I^{\op}\times I)\left(q_{2!}\one\otimes
    p_{2}^{*}S,q_{1*}\one\otimes p_{2}^{*}T\right)\longrightarrow\\
    \D(I^{\prime\op}\times I^{\prime})\left(q'_{2!}\one\otimes
    p_{2}'^{*}u^{*}S,q'_{1*}\one\otimes p_{2}'^{*}u^{*}T\right)
  \end{multline*}
  by sending a morphism $g$ to the composition
  \begin{multline*}
    q'_{2!}\tw{u}^{\op*}\one\otimes p_{2}'^{*}u^{*}S\to
    (u^{\op}\times u)^{*}(q_{2!}\one\otimes
    p_{2}^{*}S)\xrightarrow{g}(u^{\op}\times u)^{*}(q_{1*}\one\otimes
    p_{2}^{*}T)\\
    \xrightarrow{(\Id\times \eta)^{*}}(u^{\op}\times u)^{*}(q_{1*}\one\otimes
    p_{2}^{*}T)\to q'_{1*}\tw{u}^{*}\one\otimes p_{2}'^{*}u^{*}T.
  \end{multline*}
\end{dfi}

\begin{pro}\label{pro:functoriality}
  Let $u$, $\eta$, $S$, $T$ as above, assume $A\in \D(I)_{0}$ is
  fiberwise dualizable. For any $f:A\otimes S\to A\otimes T$, we have
  \begin{equation}\label{eq:pro-functoriality}
    (u,\eta)^{*}\mathrm{Tr}(f)=\mathrm{Tr}(\eta^{*}\circ u^{*}f),
  \end{equation}
  where $\eta^{*}\circ u^{*}f$ is any of the two paths from the top
  left to the bottom right in the following commutative square:
  \begin{equation*}
    \xymatrix{u^{*}A\otimes
      u^{*}S\ar[r]^{u^{*}f}\ar[d]_{\eta^{*}_{A\otimes
          S}}&u^{*}A\otimes u^{*}T\ar[d]^{\eta^{*}_{A\otimes
          T}}\\
      u^{*}A\otimes u^{*}S\ar[r]_{u^{*}f}&u^{*}A\otimes u^{*}T\mathrlap{.}}
  \end{equation*}
\end{pro}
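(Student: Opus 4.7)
The plan is to split the verification into two complementary cases: pure naturality under a functor $u:I'\to I$ (with $\eta=\catid_u$), and the effect of the 2-morphism $\eta$ in the case $u=\Id_I$. The general statement then follows by composing the two reductions, since $(u,\eta)^*$ is built precisely from a pullback piece and a 2-cell piece.

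In the first case, setting $\eta=\catid_u$, the claim reads $u^*\mathrm{Tr}(f)=\mathrm{Tr}(u^*f)$ once one identifies $(u^{\op}\times u)^*q_{2\#}\one$ with $q'_{2\#}\tw{u}^{\op*}\one$ and the analogous expression for the target, using base change in the Cartesian squares defining the twisted-arrow constructions (via Lemma~\ref{lem:fibration-exact} applied to the appropriate (op)fibrations). The bulk of the work then consists in chasing a string of naturality squares: the coevaluation is built from $\Theta$, an adjunction unit, $\Lambda$ and $\Xi$, while the evaluation additionally involves $\Upsilon$, $\Omega$ and the auxiliary $\overline{\Psi}$. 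Each building block is individually compatible with $\Psi$, which is exactly the content of the compatibility clauses listed in section~\ref{sec:ehom}; diagram~\eqref{eq:ehom-pw} supplies the pullback-compatibility of $\overline{\Psi}$ needed at the final step.

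For the $\eta$-twist case, we may assume $u=\Id_I$. The operation $(\Id,\eta)^*$ acts on a morphism $g$ simply by post-composition with $(\Id\times\eta)^*: q_{1*}\one\otimes p_2^*T\to q_{1*}\one\otimes p_2^*T$ on the target. The trace is functorial in $f$ through its middle step $\catid\boxtimes f$, so replacing $f$ by $\eta^*_{A\otimes T}\circ f$ inserts an extra $\catid\boxtimes \eta^*_{A\otimes T}$ immediately before the evaluation. One then identifies this insertion with the post-composition by $(\Id\times\eta)^*$ on the target, which reduces to verifying that the evaluation morphism is natural with respect to the $\eta^*$-action on $A$; this follows from the naturality of $\Upsilon$, $\Omega$ and $\overline{\Psi}$ in their $A$-argument.

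The main obstacle is assembling all these coherences into a single commutative diagram without drowning in notation. The behavior of $\overline{\Psi}$ — itself defined by adjunction from $\Psi$ — under both the pullback along $u$ and the $\eta$-twist is the most delicate ingredient; the auxiliary compatibility~\eqref{eq:ehom-pw} is precisely what is needed to make these two effects commute, and any technical difficulty in the proof concentrates there.
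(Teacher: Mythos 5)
Your overall strategy for the individual verifications --- chasing the compatibility of each constituent of the coevaluation and evaluation ($\Theta$, $\Lambda$, $\Xi$, $\Upsilon$, $\Omega$, $\overline{\Psi}$) with $\Psi$, using the clauses of section~\ref{sec:ehom} and~\eqref{eq:ehom-pw} --- is exactly what the paper's proof does. But the reduction you build the proof on is not valid: the general case does \emph{not} follow by composing the two special cases $(u,\catid_{u})$ and $(\Id_{I},\eta)$. A natural transformation $\eta:u\to u$ of functors $I'\to I$ is in general neither of the form $\eta_{0}*u$ for an endotransformation $\eta_{0}$ of $\Id_{I}$ nor of the form $u*\eta_{1}$ for an endotransformation $\eta_{1}$ of $\Id_{I'}$, so $(u,\eta)^{*}$ is not a composite $(u,\catid_{u})^{*}\circ(\Id_{I},\eta_{0})^{*}$, nor $(\Id_{I'},\eta_{1})^{*}\circ(u,\catid_{u})^{*}$. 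The definition of $(u,\eta)^{*}$ does consist of a pullback piece followed by post-composition with $(\Id\times\eta)^{*}$, but that post-composition acts on $(u^{\op}\times u)^{*}(q_{1*}\one\otimes p_{2}^{*}T)$ and is not itself an instance of the operation $(\Id_{I'},\eta')^{*}$ for any $\eta':\Id_{I'}\to\Id_{I'}$. This failure is not a corner case: the application the proposition exists for (Corollary~\ref{cor:trace-nat}) takes $u=i:\pt\to I$ an object and $\eta=h\in I(i,i)$ an arbitrary endomorphism, and such an $h$ need not extend to a natural endotransformation of $\Id_{I}$, while the only endotransformation of $\Id_{\pt}$ is the identity, so whiskering on that side only produces $h=\catid_{i}$.

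The repair is to treat the two effects simultaneously, as the paper does: write a single ladder of squares from $q'_{2\#}\tw{u}^{\op*}\one\otimes p_{2}'^{*}u^{*}S$ down to $q'_{1*}\tw{u}^{*}\one\otimes p_{2}'^{*}u^{*}T$, with the pulled-back trace of $f$ on one side and the trace of $\eta^{*}\circ u^{*}f$ on the other. In that ladder the $\eta$-twist occupies a single square comparing $\Id\boxtimes\eta^{*}_{A\otimes T}$ with $(\Id\times\eta)^{*}$, which commutes by the (pseudo)naturality of $\Psi$ and of the external product with respect to 2-cells; the genuinely nontrivial squares are the compatibility of the coevaluation with $(u^{\op}\times u)^{*}$ --- which reduces, via the internal hom property, to the compatibility of $\Theta$ with $\Psi$ --- and the two squares for the evaluation, exactly the points your outline already identifies. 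Your analysis of those points is correct; only the organizing reduction needs to be discarded.
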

\begin{proof}
  The two outer paths in the following diagram are exactly the two
  sides of~(\ref{eq:pro-functoriality}):
  \begin{equation*}
    \xymatrix{q'_{2!}\tw{u}^{\op*}\one\otimes p_{2}'^{*}u^{*}S\ar[r]\ar[d]_{\mathrm{coev}}&(u^{\op}\times
      u)^{*}(q_{2!}\one\otimes p_{2}^{*}S)\ar[d]^{\mathrm{coev}}\\
      (u^{*}A)^{\vee}\boxtimes u^{*}A\otimes
      p_{2}'^{*}u^{*}S\ar[d]_{\Id\boxtimes u^{*}f}&(u^{\op}\times
      u)^{*}(A^{\vee}\boxtimes A\otimes
      p_{2}^{*}S)\ar[l]_{\Psi}\ar[d]^{\Id\boxtimes f}\\
      (u^{*}A)^{\vee}\boxtimes u^{*}A\otimes
      p_{2}'^{*}u^{*}T\ar[d]_{\Id\boxtimes \eta^{*}}&(u^{\op}\times
      u)^{*}(A^{\vee}\boxtimes A\otimes
      p_{2}^{*}T)\ar[l]_{\Psi}\ar[d]^{\Id\times \eta^{*}}\\
      (u^{*}A)^{\vee}\boxtimes u^{*}A\otimes p_{2}'^{*}u^{*}T\ar[d]_{\sim}&(u^{\op}\times
      u)^{*}(A^{\vee}\boxtimes A\otimes p_{2}^{*}T)\ar[l]_{\Psi}\ar[d]^{\sim}\\
      \mu'_{*}\ehm{(u^{*}A)^{\vee}\boxtimes u^{*}A}{\one}\otimes p_{2}'^{*}u^{*}T\ar[d]_{\ehm{\mathrm{coev}}{\one}}&(u^{\op}\times
      u)^{*}(\mu_{*}\ehm{A^{\vee}\boxtimes A}{\one}\otimes p_{2}^{*}T)\ar[l]_{\Psi}\ar[d]^{\ehm{\mathrm{coev}}{\one}}\\
      \mu'_{*}\ehm{q'_{2!}\one}{\one}\otimes p_{2}'^{*}u^{*}T\ar[d]&(u^{\op}\times
      u)^{*}(\mu_{*}\ehm{q_{2!}\one}{\one}\otimes
      p_{2}^{*}T)\ar[l]_{\Psi}\ar[d]\\
      q'_{1*}\tw{u}^{*}\one\otimes
      p_{2}'^{*}u^{*}T&\ar[l](u^{\op}\times
      u)^{*}(q_{1*}\one\otimes p_{2}^{*}T)\mathrlap{.}}
  \end{equation*}
  Hence it suffices to prove the commutativity of this diagram. The
  second and third square clearly commute, the fourth and sixth square
  do so by the functoriality statements in section~\ref{sec:ehom} (use
  also~(\ref{eq:ehom-pw})). The fifth square commutes if the first does
  so we are left to show commutativity of the first one.

  By definition, $\mathrm{coev}$ is the composition
  \begin{equation*}
    q_{2!}\one\to\ehm{A}{A}\xleftarrow{\sim}A^{\vee}\boxtimes A
  \end{equation*}
  and we already know that the second arrow behaves well with respect
  to functors in $\mathbf{Dia}$. Thus it suffices to prove that the
  following diagram commutes:
  \begin{equation*}
    \xymatrix{q'_{2!}q_{2}^{\prime*}p_{2}^{\prime*}\one\ar[d]_{\sim}\ar[r]&q'_{2!}q_{2}^{\prime*}p_{2}^{\prime*}p_{2*}'q_{2*}'q_{2}'^{*}\ehm{u^{*}A}{u^{*}A}\ar[r]^-{\mathrm{adj}}&\ehm{u^{*}A}{u^{*}A}\\
      q'_{2!}q_{2}^{\prime*}p_{2}^{\prime*}u^{*}\one\ar[d]\ar[r]& \ar[d]\ar[u]^{\Psi}
      q'_{2!}q_{2}^{\prime*}p_{2}^{\prime*}u^{*}p_{2*}q_{2*}q_{2}^{*}\ehm{A}{A}\\
      \ar[r](u^{\op}\times
      u)^{*}q_{2!}q_{2}^{*}p_{2}^{*}\one&(u^{\op}\times
      u)^{*}q_{2!}q_{2}^{*}p_{2}^{*}p_{2*}q_{2*}q_{2}^{*}\ehm{A}{A}\ar[r]_-{\mathrm{adj}}&(u^{\op}\times u)^{*}\ehm{A}{A}\ar[uu]_{\Psi}\mathrlap{.}}
  \end{equation*}
  The top left square commutes by the internal hom property in
  section~\ref{sec:ehom}, the bottom left square clearly commutes, and
  the right rectangle is also easily seen to commute.
\end{proof}

Of course, in the Proposition we can take $u=i$ to be an object of
$I$, and $\eta$ to be the identity transformation. Denote the pullback
morphism $(i,\Idtransf)^{*}$ by $i^{*}$.
\begin{cor}\label{cor:trace-fiber}
  Let $i\in I_{0}$. For any $A,S,T\in\D(I)_{0}$, $A$ fiberwise
  dualizable, and for any $f:A\otimes S\to A\otimes T$, we have
  \begin{equation*}
    i^{*}\mathrm{Tr}(f)=\mathrm{tr}(f_{i})
  \end{equation*}
  modulo the obvious identifications.
\end{cor}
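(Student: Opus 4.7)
The plan is to derive this corollary directly from Proposition~\ref{pro:functoriality} by specializing $u : I' \to I$ to the object $i : \pt \to I$ and taking $\eta = \Idtransf$. With this choice, the proposition yields
\begin{equation*}
  i^{*}\mathrm{Tr}(f) = \mathrm{Tr}(i^{*}f) = \mathrm{Tr}(f_{i}),
\end{equation*}
where the right-hand side now denotes the twisted trace map of the ``external'' kind constructed in section~\ref{sec:trace-dfi}, but over the base category $I' = \pt$. So the only substantive thing to check is that this external trace over $\pt$ agrees with the classical internal trace $\mathrm{tr}$, modulo the obvious identifications.

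The key observation is that the entire diagram $\hyperlink{delta-diagram}{(\Delta_{\pt})}$ collapses: since $\tw{\pt} = \disc{\pt} = \pt^{\op}\times\pt = \pt$, each of the functors $q_{1}, q_{2}, r_{1}, r_{2}, p_{1}, p_{2}$ becomes the identity on $\pt$. Consequently $q_{2\#}\one = q_{1*}\one = \one$ and $p_{2}^{*}S = S$, $p_{2}^{*}T = T$, so the source and target of $\mathrm{Tr}$ reduce to $\D(\pt)(A\otimes S,A\otimes T)$ and $\D(\pt)(S,T)$ respectively.

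The remaining work is to identify the derivator-level coevaluation and evaluation (\ref{eq:coev-new})–(\ref{eq:ev-new-pre}) with the classical ones (\ref{eq:coev-classical})–(\ref{eq:coev-ev}). Here I would invoke the normalization property of the external hom from section~\ref{sec:ehom}: under $\Lambda$ one has $\ehm{A}{\one} \cong [A,\one] = A^{*}$ and $\ehm{A}{A} \cong [A,A]$ (compatibly with $\Theta$), so that $A^{\vee}\boxtimes A$ becomes $A^{*}\otimes A$. The last sentence of the Normalization paragraph (``all the morphisms in the statements of the previous properties reduce to the canonical morphisms in closed monoidal categories'') says precisely that $\Theta$, $\Xi$, $\Omega$, $\Upsilon$, $\Lambda$ specialize to the canonical structural maps of the closed monoidal category $\D(\pt)$. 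Hence (\ref{eq:coev-new}) becomes the classical coevaluation (\ref{eq:coev-classical}), and the evaluation constructed from $\Upsilon$ and (\ref{eq:application}) becomes, by the defining diagram (\ref{eq:coev-ev}), the classical evaluation $\mathrm{ev}$.

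Putting these identifications together, the composite in the definition of $\mathrm{Tr}(f_{i})$ over $\pt$ is precisely the composite defining $\mathrm{tr}(f_{i})$. The only potential obstacle is bookkeeping — one must verify that the natural isomorphisms $q_{2\#}\one \cong \one$, $q_{1*}\one \cong \one$, and the various instances of $\Psi$ at $u = \Id_{\pt}$ act as identities, so that no hidden twist is introduced when comparing the two traces. Granting this (it is built into the pseudofunctoriality of $\D$ and the functoriality statements collected in section~\ref{sec:ehom}), the corollary follows.
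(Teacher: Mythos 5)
Your proposal is correct and follows essentially the same route as the paper: apply Proposition~\ref{pro:functoriality} with $u=i$, $\eta=\Idtransf$ to reduce to the case $I=\pt$, then identify the external coevaluation with the classical one via the normalization property $\Lambda$, and the external evaluation with the classical one via normalization together with the characterization~(\ref{eq:coev-ev}). The paper carries out this last comparison as an explicit three-square diagram, but the content is the same as your argument.
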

\begin{proof}
  By the proposition, $i^{*}\mathrm{Tr}(f)=\mathrm{Tr}(i^{*}f)$.  It
  remains to prove that in the case $I=\pt$, the maps $\mathrm{Tr}$
  and $\mathrm{tr}$ coincide. Thus assume $I=\pt$ and consider the
  following diagram:
  \begin{equation*}
    \xymatrix{\catid_{!}\one\otimes
      S\ar[d]_{\sim}\ar[r]^-{\mathrm{coev}}&A^{\vee}\otimes
      A\otimes S\ar[r]^{f}&A^{\vee}\otimes
      A\otimes
      T\ar[r]^-{\mathrm{ev}}&\catid_{*}\one\otimes
      T\\
      \one\otimes S\ar[r]_-{\mathrm{coev}}&A^{*}\otimes A\otimes
      S\ar[r]_{f}\ar[u]^{\sim}_{\Lambda}&A^{*}\otimes A\otimes
      T\ar[r]_-{\mathrm{ev}}\ar[u]_{\sim}^{\Lambda}&\one\otimes T\ar[u]_{\sim}\mathrlap{.}}
  \end{equation*}
  The composition of the top horizontal arrows is $\mathrm{Tr}(f)$
  while the composition of the bottom horizontal arrows is
  $\mathrm{tr}(f)$. The middle square clearly commutes. The left
  square commutes by the normalization property of the external hom,
  and commutativity of the right square can be deduced from this
  and~(\ref{eq:coev-ev}).
\end{proof}

Let us come back to the situation considered at the beginning of this
section. Here the proposition implies:

\begin{cor}\label{cor:trace-nat}
  Let $A\in\D(I)_{0}$ fiberwise dualizable, $S,T\in\D(\pt)_{0}$, $i\in
  I_{0}$, $h\in I(i,i)$, and $f:A\boxtimes S\to A\boxtimes T\in
  \D(I)_{1}$. Then, modulo the obvious identifications, the
  $(i,h)$-component of $\mathrm{Tr}(f)$ is $\mathrm{tr}(h^{*}\circ
  f_{i})$.
\end{cor}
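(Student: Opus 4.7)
The plan is to deduce this corollary from Proposition~\ref{pro:functoriality}, specialized to the point functor $u=i\colon\pt\to I$ together with $\eta=h$, where we view $h\in I(i,i)$ as the corresponding 2-cell $i\Rightarrow i$ in $\mathbf{Cat}$ between functors $\pt\to I$. With these choices the right-hand side of~\eqref{eq:pro-functoriality} reads $\mathrm{Tr}(h^{*}\circ f_{i})$, which lives in $\D(\pt)$ since $I'=\pt$.

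Over the terminal category the external trace $\mathrm{Tr}$ reduces to the classical trace $\mathrm{tr}$. This is precisely the computation carried out at the end of the proof of Corollary~\ref{cor:trace-fiber}: the normalization property of the external hom identifies $\ehm{A}{\one}$ with the classical dual $A^{\ast}$ over $\pt$, and the coevaluation and evaluation built in Section~\ref{sec:trace-dfi} translate, via this identification, into the classical ones. I would invoke (or, to avoid self-reference, simply rerun) that argument to conclude $\mathrm{Tr}(h^{*}\circ f_{i})=\mathrm{tr}(h^{*}\circ f_{i})$.

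The remaining step, and the main obstacle, is to identify the left-hand side $(u,\eta)^{*}\mathrm{Tr}(f)$ with the $(i,h)$-component of $\mathrm{Tr}(f)$ in the sense of Remark~\ref{rem:components}. The guiding observation is that the pair $(u,\eta)$ factors canonically through $\disc{I}$ via a functor $v\colon\pt\to\disc{I}$ picking out precisely the representative $(\catid_{i},h)$ of the class $(i,h)\in\pi_{0}(\disc{I})$ fixed at the beginning of the section: its $r_{1}$-leg lands on $\catid_{i}\in\tw{I}$ while its $r_{2}$-leg lands on $h\in\tw{I}^{\op}$, both lying over $(i,i)=(u^{\op}\times u)(\pt)\in I^{\op}\times I$. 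The map
\[
  \D(I^{\op}\times I)\bigl(q_{2\#}\one\boxtimes S,\, q_{1*}\one\boxtimes T\bigr)\longrightarrow \D(\disc{I})\bigl(\restr{S}{\disc{I}},\, \restr{T}{\disc{I}}\bigr)
\]
constructed in Remark~\ref{rem:components} uses only the adjunctions $q_{2\#}\dashv q_{2}^{*}$, $q_{1}^{*}\dashv q_{1*}$ and the base change $r_{2*}r_{1}^{*}\xrightarrow{\sim}q_{2}^{*}q_{1*}$ of Lemma~\ref{lem:fibration-exact}, hence is defined without appeal to either~\ref{hyp:q_1} or~\ref{hyp:dia}; pullback along $v$ then extracts the $(i,h)$-component. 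A diagram chase through these adjunctions should show that the result coincides with $(u,\eta)^{*}\mathrm{Tr}(f)$, the key point being that the factor $(u^{\op}\times u)^{*}$ accounts for the $q_{1}$-leg (the identity $\catid_{i}$), while the 2-cell $(\Id\times\eta)^{*}=(\Id\times h)^{*}$ supplies the $q_{2}$-leg (the arrow $h$). Combined with the two previous paragraphs, this yields the claimed formula.
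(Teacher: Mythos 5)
Your proposal follows the paper's proof essentially step for step: apply Proposition~\ref{pro:functoriality} with $(u,\eta)=(i,h)$, reduce $\mathrm{Tr}$ to $\mathrm{tr}$ over $\pt$ via the argument of Corollary~\ref{cor:trace-fiber}, and then identify $(i,h)^{*}\mathrm{Tr}(f)$ with the $(i,h)$-component --- the paper carries out your asserted final diagram chase explicitly, interpolating between the two compositions through a middle column given by restriction at $h\in\tw{I}^{\op}$ and the arrow $(\catid_{i},h)\colon h\to\catid_{i}$ of $\tw{I}^{\op}$, using that $S$ and $T$ are constant. The one imprecision in your sketch is that, in the definition of $(u,\eta)^{*}$, the 2-cell $(\Id\times\eta)^{*}$ is inserted on the $q_{1*}\one\otimes p_{2}^{*}T$ side and only gets transported to the $q_{2}$-leg via that arrow, rather than directly ``supplying'' it.
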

\begin{proof}$h$ defines a natural transformation $i\to i$ and we have
  \begin{align*}
    (i,h)^{*}\mathrm{Tr}(f)&=\mathrm{Tr}(h^{*}\circ i^{*}f)&&\text{by the
  proposition above,}\\
&=\mathrm{tr}(h^{*}\circ i^{*}f)&&\text{by the
  previous corollary.}
  \end{align*}
  We need to prove that the left hand side computes the
  $(i,h)$-component. The pair $(\catid_{i},h)$ defines an arrow in
  $\tw{I}^{\op}$ from $h$ to $\catid_{i}$. The composition of the
  vertical arrows on the left of the following diagram is the
  $(i,h)$-component of $\mathrm{Tr}(f)$ while the composition of the
  vertical arrows on the right is $(i,h)^{*}\mathrm{Tr}(f)$:
  \begin{equation*}
    \xymatrix{(i,h)^{*}\restr{S}{\disc{I}}\ar[d]_{\mathrm{adj}}&h^{*}\restr{S}{\tw{I}^{\op}}\ar@{=}[l]\ar[d]_{\mathrm{adj}}&\catid_{i}^{*}\restr{S}{\tw{I}^{\op}}\ar@{=}[l]\ar[d]^{\mathrm{adj}}&\\
  (i,h)^{*}r_{2}^{*}q_{2}^{*}q_{2!}(\one\boxtimes
  S)\ar[d]_{\sim}&h^{*}q_{2}^{*}q_{2!}(\one\boxtimes S)\ar@{=}[l]\ar[d]_{\sim}&\ar[l]_-{(\catid_{i},h)^{*}}\catid_{i}^{*}q_{2}^{*}q_{2!}(\one\boxtimes S)\ar[d]^{\sim}\\
  (i,h)^{*}r_{2}^{*}q_{2}^{*}(q_{2!}\one\boxtimes
  S)\ar[d]_{\mathrm{Tr}(f)}&h^{*}q_{2}^{*}(q_{2!}\one\boxtimes S)\ar@{=}[l]\ar[d]_{\mathrm{Tr}(f)}&\ar[l]_-{(\catid_{i},h)^{*}}\catid_{i}^{*}q_{2}^{*}(q_{2!}\one\boxtimes S)\ar[d]^{\mathrm{Tr}(f)}\\
(i,h)^{*}r_{2}^{*}q_{2}^{*}(q_{1*}\one\boxtimes
T)\ar[d]&\ar@{=}[l]h^{*}q_{2}^{*}(q_{1*}\one\boxtimes
T)\ar[d]&\ar[l]_-{(\catid_{i},h)^{*}}\catid_{i}^{*}q_{2}^{*}(q_{1*}\one\boxtimes
T)\ar[d]\\
(i,h)^{*}r_{2}^{*}q_{2}^{*}q_{1*}(\one\boxtimes
T)\ar[d]_{\sim}&\ar@{=}[l]h^{*}q_{2}^{*}q_{1*}(\one\boxtimes
T)\ar[d]_{\sim}&\ar[l]_-{(\catid_{i},h)^{*}}\catid_{i}^{*}q_{2}^{*}q_{1*}(\one\boxtimes
T)\ar[d]^{(\catid_{i}\times h)^{*}}\\
(i,h)^{*}r_{1}^{*}q_{1}^{*}q_{1*}\restr{T}{\tw{I}}\ar[d]_{\mathrm{adj}}&\ar@{=}[l]\catid_{i}^{*}q_{1}^{*}q_{1*}\restr{T}{\tw{I}}\ar@{=}[r]\ar[d]_{\mathrm{adj}}&\catid_{i}^{*}q_{1}^{*}q_{1*}\restr{T}{\tw{I}}\ar[d]^{\mathrm{adj}}\\
(i,h)^{*}\restr{T}{\disc{I}}&\catid_{i}^{*}\restr{T}{\tw{I}}\ar@{=}[l]\ar@{=}[r]&\catid_{i}^{*}\restr{T}{\tw{I}}\mathrlap{.}}
  \end{equation*}
  The unlabeled arrows are the canonical ones; all squares clearly
  commute.
\end{proof}

Knowing the components of the trace we now give a better description
of the indexing set $\pi_{0}(\disc{I})$, at least for ``EI-categories'':
\begin{dfi}
  An \emph{EI-category} $I$ is a category whose endomorphisms are all
  invertible, \ie{} such that for all $i\in I_{0}$, $G_{i}:=I(i,i)$ is
  a group.
\end{dfi}
EI-categories have been of interest in studies pertaining to different
fields of mathematics, especially in representation theory and
algebraic topology; closest to our discussion in the sequel is their
role in the study of the Euler characteristic of a category
(see~\cite{fls-euler-hocolim}, \cite{leinster-euler}). We will see
examples of EI-categories below.

Let $I$ be an EI-category; we define the \emph{endomorphism category}
$\emor{I}$ associated to $I$ to be the category whose objects are
endomorphisms in $I$ and an arrow from $h\in I(i,i)$ to $k\in I(j,j)$
is a morphism $m\in I(i,j)$ such that $mh=km$. The
object $h\in I(i,i)$ is sometimes denoted by $(i,h)$. There is also a
canonical functor
\begin{equation*}
  \disc{I}\to \emor{I}
\end{equation*}
which takes a typical object~(\ref{eq:dI-object}) of $\disc{I}$ to its
composition $h_{1}h_{2}\in I(j,j)$. Notice that it takes $(i,h)$ to
$(i,h)$.

\begin{lem}\label{lem:eqrel-iso-ei}
  Let $I$ be an EI-category, $h\in I(i,i)$, $k\in I(j,j)$. Then
  $(i,h)$ and $(j,k)$ lie in the same connected component of $\disc{I}$
  if and only if $h\cong k$ as objects of $\emor{I}$. In other words,
  the functor defined above induces a bijection
  \begin{equation*}
    \pi_{0}(\disc{I})\longleftrightarrow \emor{I}_{0}\slash\cong.
  \end{equation*}
\end{lem}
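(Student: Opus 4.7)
The plan is to upgrade the canonical functor $F\colon\disc{I}\to\emor{I}$, which sends a morphism $(\alpha,\beta)$ to $\beta$ viewed as a morphism in $\emor{I}$, to a bijection on isomorphism classes by showing that it inverts every morphism of $\disc{I}$.

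For the easy direction, suppose $m\in I(i,j)$ is an isomorphism with $mh=km$. The pair $(\alpha,\beta)=(m^{-1},m)$ satisfies $\beta\alpha=\catid_j$ and $\alpha k\beta=m^{-1}km=h$, which are precisely the two commutativity relations required of a morphism $(i,h)\to(j,k)$ in $\disc{I}$; hence these two objects lie in the same connected component.

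For the hard direction, the key input is the following elementary observation about EI-categories: whenever morphisms $f\colon a\to b$ and $g\colon b\to a$ both exist, they are automatically isomorphisms. Indeed, the endomorphisms $gf$ and $fg$ are invertible by the EI-hypothesis, so $g(fg)^{-1}$ is a right inverse and $(gf)^{-1}g$ a left inverse of $f$; these must then coincide and exhibit $f$ as an isomorphism, and symmetrically for $g$. Applied to a general morphism $(\alpha,\beta)\colon(a,b,f,g)\to(c,d,p,q)$ in $\disc{I}$, this ensures that each of $f,g,p,q$ is invertible. The two defining square relations read $p=\beta f\alpha$ and $g=\alpha q\beta$, and combined with the invertibility of $p$ and $g$ they exhibit $p^{-1}\beta f$ and $q\beta g^{-1}$ as a left and a right inverse of $\alpha$ (and symmetrically $g^{-1}\alpha q$ and $f\alpha p^{-1}$ for $\beta$). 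Hence $\alpha$ and $\beta$ are themselves isomorphisms in $I$, so in particular $\beta\colon(b,fg)\to(d,pq)$ is an isomorphism in $\emor{I}$.

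It follows that $F$ sends every zigzag in $\disc{I}$ to a zigzag of isomorphisms in $\emor{I}$, hence descends to a well-defined map $\pi_0(\disc{I})\to\emor{I}_0/\cong$. Surjectivity is immediate from $F((i,h))=(i,h)$, and injectivity follows by reducing a general object to a diagonal one using the canonical morphism $(i,h_2h_1)\to(i,i',h_1,h_2)$ recalled in the excerpt and then invoking the easy direction. The main obstacle is really the EI-observation in the hard direction: the square relations in $\disc{I}$ take the three-term form $p=\beta f\alpha$ rather than the usual two-term $p\alpha=\beta f$, so the invertibility of $\alpha$ and $\beta$ is not formal and genuinely needs the EI-hypothesis to force all four of $f,g,p,q$ to be isomorphisms simultaneously.
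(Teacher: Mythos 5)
Your proof is correct and follows essentially the same route as the paper: the easy direction via the pair $(m^{-1},m)$ is identical, and your key observation that the EI-hypothesis forces both arrows of any object of $\disc{I}$ (and hence both components of any morphism) to be invertible is exactly the paper's argument that $\disc{I}$ is a groupoid. The only cosmetic difference is that the paper composes the resulting zigzag of isomorphisms inside $\disc{I}$ and then applies the functor, whereas you push the zigzag forward to $\emor{I}$ first.
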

\begin{proof}
  If $m:h\to k$ is an isomorphism in $\emor{I}$ then
  $(m^{-1},m)$ defines a morphism in $\disc{I}$ from $(i,h)$
  to $(j,k)$.

  For the converse we notice that $\disc{I}$ is a groupoid. Indeed, it
  follows from the definition of an EI-category that in a typical
  object~(\ref{eq:dI-object}) of $\disc{I}$, both $h_{1}$ and $h_{2}$
  must be isomorphisms. From this, and using a similar argument, one
  deduces that the components of any morphism in $\disc{I}$ are
  invertible.

  Given now a morphism $(m_{1},m_{2})$ from $(i,h)$ to $(j,k)$ in $\disc{I}$,
  we must have $m_{1}=m_{2}^{-1}$ and therefore $m_{2}$ defines an isomorphism
  from $h$ to $k$ in $\emor{I}$.
\end{proof}

\begin{exa}\label{exa:indexing-set}\mbox{}
  \begin{enumerate}
  \item Let $I$ be a preordered set considered as a category. Clearly
    this is an EI-category, and $\emor{I}=I$. It follows that we have
    $\pi_{0}(\disc{I})=I_{0}\slash\cong$, the isomorphism classes of
    objects in $I$, or in other words, the (underlying set of the)
    poset associated to $I$. If the hypotheses of
    Lemma~\ref{lem:input_eq} are satisfied then the trace of an
    endomorphism $f$ is just the family of the traces of the fibers
    $(\mathrm{tr}(f_{i}))_{i}$, indexed by isomorphism classes of
    objects in $I$.
  \item Let $G$ be a group. We can consider $G$ canonically as a
    category with one object, the morphisms being given by $G$ itself,
    the composition being the multiplication in $G$. Again, this is an
    EI-category. Given $h$ and $k$ in $G$, an element $m\in G$ defines
    a morphism $m:h\to k$ if and only if it satisfies $mhm^{-1}=k$, so
    $h$ and $k$ are connected (and therefore isomorphic) in $\emor{I}$
    if and only if they are conjugate in $G$. It follows that
    $\pi_{0}(\disc{I})$ can be identified with the set of conjugacy
    classes of $G$. If the hypotheses of Lemma~\ref{lem:input_eq} are
    satisfied then the trace of an endomorphism $f$ with unique fiber
    $e^{*}f$ is just the family of traces $(\mathrm{tr}(h^{*}\circ
    e^{*}f))_{[h]}$, indexed by the conjugacy classes of $G$.
  \item Generalizing the two previous examples, for an arbitrary
    EI-category $I$, $\emor{I}_{0}\slash\cong$ can be identified with
    the disjoint union of the sets $C_{i}$ of conjugacy classes of the
    groups $G_{i}=I(i,i)$ for representatives $i$ of the isomorphism
    classes in $I$, \ie{}
    \begin{equation*}
      \pi_{0}(\disc{I})\longleftrightarrow\coprod_{i\in I_{0}\slash\cong}C_{i}.
    \end{equation*}
    If the hypotheses of Lemma~\ref{lem:input_eq} are satisfied then
    the trace of an endomorphism $f$ is just the family of traces
    $(\mathrm{tr}(h^{*}\circ f_{i}))_{i,[h]}$.
  \end{enumerate}
\end{exa}

\begin{rem}
  One can define the category $\emor{I}$ without the hypothesis that
  $I$ be an EI-category but the previous lemma does not remain true
  without it. However, there is the following general alternative
  description of $\pi_{0}(\disc{I})$: Let $\eqrel$ be the equivalence
  relation on the set $\coprod_{i\in I_{0}}I(i,i)$ generated by the
  relation $m_{1}m_{2}\eqrel m_{2}m_{1}$, $m_{1},m_{2}\in I_{1}$
  composable.  Then $(i,h)$ and $(j,k)$ lie in the same connected
  component of $\disc{I}$ if and only if $h\eqrel k$. It follows that
  for arbitrary $I$, there is a bijection
  \begin{equation*}
    \pi_{0}(\disc{I})\longleftrightarrow \left(\coprod_{i\in I_{0}}I(i,i)\right)\slash\eqrel.
  \end{equation*}
\end{rem}

\section{The trace of the homotopy colimit}
\label{sec:formula}Given a closed monoidal derivator $\D$, a category
$I$ in the domain $\mathbf{Dia}$ of $\D$ and objects $A$ of $\D(I)$
fiberwise dualizable, $S$ and $T$ of $\D(\pt)$, we can associate to
every $f:A\boxtimes S\to A\boxtimes T$ in $\D(I)$ its homotopy colimit
$p_{I!}f:p_{I!}A\boxtimes S\to p_{I!}A\boxtimes T$ by requiring that
the following square commutes:
\begin{equation*}
  \xymatrix{p_{I!}(A\boxtimes S)\ar[r]^{f}\ar[d]^{\sim}_{(\ref{eq:mon-der-external})}&p_{I!}(A\boxtimes
    T)\ar[d]_{\sim}^{(\ref{eq:mon-der-external})}\\
    p_{I!}A\boxtimes S\ar[r]_{p_{I!}f}&p_{I!}A\boxtimes T\mathrlap{.}}
\end{equation*}
We will now show that, in good cases, the trace of $f$ as defined
above contains enough information to compute the trace of the homotopy
colimit of $f$.

\begin{dfi}\label{dfi:Phi}  Given a morphism $g:q_{2!}\one\boxtimes
  S\to q_{1*}\one\boxtimes T$ as in Remark~\ref{rem:components} (or,
  under the hypotheses in Lemma~\ref{lem:input_eq}, the family of its
  $\gamma$-components, $\gamma\in\pi_{0}(\disc{I})$), we associate to
  it a new morphism $\Phi(g):S\to T$, provided that the morphism
  $p_{I!}p_{2*}\to p_{I^{\op}*}p_{1!}$ is invertible. (This latter
  morphism is obtained by adjunction from the composition
  \begin{equation*}
    p_{I^{\op}}^{*}p_{I!}p_{2*}\xleftarrow{\sim} p_{1!}p_{2}^{*}p_{2*}\xrightarrow{\mathrm{adj}}p_{1!},
  \end{equation*}
  where for the first isomorphism one uses
  Lemma~\ref{lem:fibration-exact}.) In this case $\Phi(g)$ is defined
  by the requirement that the following rectangle commutes:
  \begin{equation}\label{eq:main_diagram}
      \xymatrix{S\ar[rr]^{\Phi(g)}\ar[d]_{\mathrm{adj}}&&T\\
      p_{I^{\op}*}p_{I^{\op}}^{*}S&&p_{I!}p_{I}^{*}T\ar[u]_{\mathrm{adj}}\ar[d]^{\mathrm{adj}}_{\sim}\\
      \ar[d]^{\sim}\ar[u]^{\mathrm{adj}}_{\sim}p_{I^{\op}*}p_{1!}q_{2!}q_{2}^{*}p_{1}^{*}p_{I^{\op}}^{*}S&&p_{I!}p_{2*}q_{1*}q_{1}^{*}p_{2}^{*}p_{I}^{*}T\\
      p_{I^{\op}*}p_{1!}q_{2!}(\one\boxtimes
      S)\ar[d]^{\sim}&&p_{I!}p_{2*}q_{1*}(\one\boxtimes T)\ar[u]^{\sim}\\
      p_{I^{\op}*}p_{1!}(q_{2!}\one\boxtimes
      S)\ar[r]_-{g}&p_{I^{\op}*}p_{1!}(q_{1*}\one\boxtimes
      T)&\ar[l]^-{\sim}\ar[u]p_{I!}p_{2*}(q_{1*}\one\boxtimes T)\mathrlap{.}}
  \end{equation}
\end{dfi}

Here, the two (co)units of adjunctions going in the ``wrong''
direction are invertible by Lemma~\ref{lem:isos}.

\begin{rem}\label{rem:phi-naturality}
  Suppose that the conditions~\labelcref{hyp:q_1,hyp:dia} of
  Lemma~\ref{lem:input_eq} are satisfied, thus $\Phi=\Phi_{S,T}$ can
  be identified with a map
  $\prod_{\pi_{0}(\disc{I})}\D(\pt)\left(S,T\right)\to\D(\pt)\left(S,T\right)$. The
  observation is that this map is natural in both arguments, in the
  following sense: Given morphisms $S\to S'$ and $T\to T'$, the
  following diagram commutes:
  \begin{equation*}
    \xymatrix@C=35pt{\prod_{\pi_0(\disc{I})}\D(\pt)\left(S',T\right)\ar[r]^-{\Phi_{S',T}}\ar[d]&\D(\pt)\left(S',T\right)\ar[d]\\
      \prod_{\pi_0(\disc{I})}\D(\pt)\left(S,T\right)\ar[r]^-{\Phi_{S,T}}\ar[d]&\D(\pt)\left(S,T\right)\ar[d]\\
      \prod_{\pi_0(\disc{I})}\D(\pt)\left(S,T'\right)\ar[r]^-{\Phi_{S,T'}}&\D(\pt)\left(S,T'\right)\mathrlap{.}}
  \end{equation*}
  This follows immediately from the definition of $\Phi$.
\end{rem}

\begin{pro}\label{pro:main}
  Let $I\in \mathbf{Dia}_{0}$, and suppose that the following
  conditions are satisfied:
  \begin{enumerate}[resume*=hypotheses]
  \item the morphism $p_{I!}p_{2*}\to p_{I^{\op}*}p_{1!}$ is
    invertible;\label{hyp:limcolim}
  \item the morphism $p_{I^{\op}*}\plho\otimes\plho\to
    p_{I^{\op}*}(\plho\otimes p_{I^{\op}}^{*}\plho)$ is invertible.\label{hyp:p_Iop}
  \end{enumerate}
  If $A\in\D(I)_{0}$ is fiberwise dualizable, $S$, $T\in\D(\pt)_{0}$,
  and $f:A\boxtimes S\to A\boxtimes T$, then the object $p_{I!}A$ is
  dualizable in $\D(\pt)$ and the following equality holds:
  \begin{equation*}
    \Phi(\mathrm{Tr}(f))=\mathrm{tr}(p_{I!}f).
  \end{equation*}
\end{pro}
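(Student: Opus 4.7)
The plan is to first exhibit an explicit dual of $p_{I\#}A$ together with its coevaluation and evaluation morphisms, obtained by pushing down the \emph{external} coevaluation and evaluation of $A$, and then to identify the long composite defining $\Phi(\mathrm{Tr}(f))$ with the expanded definition of the classical trace $\mathrm{tr}(p_{I\#}f)$ via a single large commutative diagram.

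For the dualizability part, I take as candidate dual $B:=p_{I^{\op}*}A^{\vee}$. The external coevaluation $\mathrm{coev}:q_{2\#}\one\to A^{\vee}\boxtimes A$ of Section \ref{sec:trace-dfi} lives in $\D(I^{\op}\times I)$; applying $p_{I^{\op}*}p_{1\#}$ and using \ref{hyp:limcolim} to rewrite $p_{I^{\op}*}p_{1\#}q_{2\#}\one\simeq p_{I\#}p_{2*}q_{2\#}\one\simeq \one$ (via the same sequence of adjunction (co)units that appears in the upper half of \eqref{eq:main_diagram} with $S=\one$), and using \ref{hyp:p_Iop} to rewrite $p_{I^{\op}*}p_{1\#}(A^{\vee}\boxtimes A)\simeq p_{I^{\op}*}A^{\vee}\otimes p_{I\#}A=B\otimes p_{I\#}A$, yields a map $\one\to B\otimes p_{I\#}A$. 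The evaluation is constructed dually, by pushing down the external evaluation along the symmetric route. Dualizability is then equivalent to the characterising property \eqref{eq:coev-ev} for this pair, and I expect to verify it by reducing, via naturality of the projection and base-change isomorphisms provided by \ref{hyp:limcolim}--\ref{hyp:p_Iop}, to the analogous identity already built into the definition of the external evaluation in Section \ref{sec:trace-dfi}, where $\mathrm{ev}$ was defined precisely as the dual of $\mathrm{coev}$.

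For the formula $\Phi(\mathrm{Tr}(f))=\mathrm{tr}(p_{I\#}f)$, I would expand the right-hand side as the composition
\[
\one\to B\otimes p_{I\#}A\xrightarrow{\Id\otimes p_{I\#}f}B\otimes p_{I\#}A\to \one,
\]
and then compare it column by column with the diagram \eqref{eq:main_diagram}. The top and bottom portions of \eqref{eq:main_diagram} are exactly the adjunction data defining the coevaluation and evaluation of $p_{I\#}A$ above, while its middle row, into which $g=\mathrm{Tr}(f)$ is inserted, unpacks by construction of $\mathrm{Tr}$ into the composite of the external coev, $\Id\boxtimes f$, and the external ev. Since $p_{I\#}f$ satisfies the commutativity prescribing it, the arrow $\Id\otimes p_{I\#}f$ in the middle of $\mathrm{tr}(p_{I\#}f)$ matches the middle of $\Phi(\mathrm{Tr}(f))$ after passing through the same projection and base-change isomorphisms. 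Hence the two composites agree by a single diagram chase.

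The main obstacle is the bookkeeping in that final chase. One must keep track of two distinct families of coherence squares: those which exchange the order of $p_{I\#}$, $p_{I^{\op}*}$ with the pushforwards $p_{1\#}$, $p_{2*}$, $q_{1*}$, $q_{2\#}$, all ultimately coming from \ref{hyp:limcolim} and Lemma \ref{lem:fibration-exact} applied to the pullback squares in $(\Delta_{I})$, and those which commute $p_{I^{\op}*}$ (or $p_{I\#}$) past the tensor product, which rest on \ref{hyp:p_Iop}. A secondary difficulty, which may in fact be the cleanest place to begin, is verifying the triangle identity for the newly constructed coev/ev: the most economical route is not to attempt it directly in $\D(\pt)$ but to lift the question to $\D(I^{\op}\times I)$, where it reduces term-for-term to the identity already proven for the external coev and ev in Section \ref{sec:trace-dfi}.
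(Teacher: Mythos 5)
Your overall strategy for the equality $\Phi(\mathrm{Tr}(f))=\mathrm{tr}(p_{I\#}f)$ is the paper's: both decompose $\mathrm{Tr}(f)$ and $\mathrm{tr}(p_{I\#}f)$ into coevaluation, the morphism induced by $f$, and evaluation, reduce to $S=T=\one$ using \labelcref{hyp:p_Iop}, and then match the two outer squares through the base-change and projection isomorphisms (this is exactly the schema~\eqref{eq:colim-schema} and the two lemmas following it in appendix~\ref{sec:app2}). You underestimate how much work the "single diagram chase" is --- each outer square decomposes into half a dozen sub-rectangles, each resting on one of the formal properties of $\ehm{\cdot}{\cdot}$ --- but the route is right.

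The genuine gap is in your dualizability argument. You propose to exhibit $p_{I^{\op}*}A^{\vee}$ as a dual by pushing down the external coev/ev and then verifying the triangle identities, "reducing term-for-term to the identity already proven for the external coev and ev in Section~\ref{sec:trace-dfi}." No such identity is proven there: the external evaluation is \emph{defined} as the dual of the coevaluation, in imitation of~\eqref{eq:coev-ev}, and~\eqref{eq:coev-ev} is the property characterizing $\mathrm{coev}$ \emph{given} dualizability, not a property equivalent to dualizability; no zig-zag identity for the external pair is ever established. Nor can you check such an identity fiberwise: axiom~\ref{der:conservative} lets you detect isomorphisms on fibers, not equalities of parallel morphisms. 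So the step "dualizability is then equivalent to \eqref{eq:coev-ev} / the triangle identity, which I verify by reduction to Section~\ref{sec:trace-dfi}" has no support. The paper avoids this entirely by verifying the defining condition~\eqref{eq:dualizability} directly: via $\overline{\Psi}:[p_{I\#}A,B]\xrightarrow{\sim}p_{I^{\op}*}\ehm{A}{B}$ (part of Lemma~\ref{lem:isos}) and \labelcref{hyp:p_Iop}, the map $[p_{I\#}A,\one]\otimes B\to[p_{I\#}A,\one\otimes B]$ is identified with $p_{I^{\op}*}$ applied to $\Xi:\ehm{A}{\one}\boxtimes B\to\ehm{A}{\one\boxtimes B}$, whose invertibility \emph{is} a fiberwise statement (it reduces on fibers to~\eqref{eq:dualizability} for the dualizable objects $A_i$) and hence follows from~\ref{der:conservative}. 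You should replace your triangle-identity plan with this reduction; once dualizability is known, the canonical coev/ev of~\eqref{eq:coev-classical} exist and the comparison with the pushed-down maps becomes the content of the two outer squares of~\eqref{eq:colim-schema}, which is where the real work lies.
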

\begin{proof}\labelcref{hyp:p_Iop} implies that $p_{I!}$ preserves fiberwise dualizable
  objects. Then the proof proceeds by
  decomposing~\eqref{eq:main_diagram} into smaller pieces; since it is
  rather long and not very enlightening we defer it to
  appendix~\ref{sec:app2}.
\end{proof}

\begin{rem}
  It is worth noting that the particular shape of
  diagram~\eqref{eq:main_diagram} is of no importance to us. All we
  will use in the sequel is that there exists a map $\Phi$, natural in
  the sense of Remark~\ref{rem:phi-naturality}, and which takes the
  trace of a (twisted) endomorphism to the trace of its homotopy
  colimit. The idea is the following: Suppose $\D$ is additive, and
  let $I$ be a category
  satisfying~\labelcref{hyp:q_1,hyp:dia,hyp:limcolim,hyp:p_Iop}. Then
  Corollary~\ref{cor:trace-nat} tells us that $\mathrm{Tr}(f)$ is
  completely determined by the local traces $\mathrm{tr}(h^{*}\circ
  f_{i})$, $(i,h)\in\pi_0(\disc{I})$. If $\pi_0(\disc{I})$ is finite
  then, by Remark~\ref{rem:phi-naturality}, we can think of $\Phi$ as
  a linear map which takes the input $(\mathrm{tr}(h^{*}\circ
  f_{i}))_{(i,h)}$ and outputs
  $\sum_{(i,h)}\lambda_{(i,h)}\mathrm{tr}(h^{*}\circ
  f_{i})=\mathrm{tr}(p_{I!}f)$. We will obtain a formula for the trace
  of the homotopy colimit by determining these coefficients
  $\lambda_{(i,h)}$.
\end{rem}

Let $I$ be a finite category. The \emph{$\zeta$-function on $I$} is
defined as the association
\begin{align*}
  \zeta_{I}:I_{0}\times I_{0}&\to \Z\\
  (i,j)&\mapsto \#I(i,j).
\end{align*}
Following~\cite{leinster-euler} we define an \emph{$R$-coweighting on
  $I$}, $R$ a commutative unitary ring, to be a family
$(\lambda_{i})_{i\in I_{0}}$ of elements of $R$ such that the
following equality holds for all $j\in I_{0}$:
\begin{equation}\label{eq:coweighting}
  1=\sum_{i\in I_{0}}\lambda_{i}\zeta_{I}(i,j).
\end{equation}
Not all finite categories possess an $R$-coweighting; and if one such
exists it might not be unique. Preordered sets always possess an
$R$-coweighting (and it is unique if and only if the preorder is a
partial order), groups possess one if and only if their order is
invertible in $R$ (and in this case it is unique). One trivial reason
why a coweighting may not be unique is the existence of isomorphic
distinct objects in a category. For in this case any modification of
the family $(\lambda_{i})_{i}$ which doesn't change the sum of the
coefficients $\lambda_{i}$ for isomorphic objects leaves the right
hand side of~(\ref{eq:coweighting}) unchanged. On the other hand, this
also means that any coweighting $(\lambda_{i})_{i}$ on $I$ induces a
coweighting $(\rho_{j})_{j}$ on the core of $I$ by setting
$\rho_{j}=\sum_{i\in I_{0}, i\cong j}\lambda_{i}$. (Here, ``the''
\emph{core of $I$} is any equivalent subcategory of $I$ which is
\emph{skeletal}, \ie{} has no distinct isomorphic objects.)
Conversely, any coweighting on the core induces a coweighting on $I$
by choosing all additional coefficients to be 0. We therefore say that
$I$ admits an \emph{essentially unique $R$-coweighting} if there is a
unique $R$-coweighting on its core. In this case we sometimes speak
abusively of \emph{the} $R$-coweighting, especially if the context
makes it clear which core is to be chosen.

For an EI-category $I$ we continue to denote by $G_{i}$, $i\in I_{0}$,
the group $I(i,i)$, and by $C_{i}$ the set of conjugacy classes of
$G_{i}$ (cf.~Example~\ref{exa:indexing-set}). Given $h\in G_{i}$, we
denote by $[h]\in C_{i}$ the conjugacy class of $h$ in $G_{i}$.
\begin{dfi}
  Let $I$ be a finite EI-category. We define its \emph{characteristic},
  denoted by $\Cha(I)$, to be the product of distinct prime factors
  dividing the order of the automorphism group of some object in the
  category, \ie{}
  \begin{equation*}
    \Cha(I)=\rad
    \left(\prod_{i\in I_{0}}\#G_{i}
    \right).
  \end{equation*}
\end{dfi}

\begin{lem}[\protect{cf.~\cite[1.4]{leinster-euler}}]\label{lem:coweighting}
  Let $I$ be a finite EI-category and $R$ a commutative unitary ring.
  If $\Cha(I)$ is invertible in $R$ then there is an essentially
  unique $R$-coweighting on $\emor{I}$. It is given as follows:

  Choose a core $J\subset\emor{I}$ of objects $\{(i,h)\}$. Then
  \begin{equation*}
    \lambda_{(j,k)}=\sum_{(i,h)\in J_{0}}\sum_{n\geq
      0}(-1)^{n}\sum\frac{\#[h_{0}]}{\#G_{i_{0}}}\cdots\frac{\#[h_{n}]}{\#G_{i_{n}}},
  \end{equation*}
  where the last sum is over all non-degenerate paths
  \begin{equation*}
    (i,h)=(i_{0},h_{0})\to (i_{1},h_{1})\to\cdots\to
    (i_{n},h_{n})=(j,k)
  \end{equation*}
  from $(i,h)$ to $(j,k)$ in $J$ (\ie{} the $(i_{l},h_{l})$ are
  pairwise distinct, or, equivalently, the $i_{l}$ are pairwise
  distinct, or, also equivalently, none of the arrows is the
  identity).
\end{lem}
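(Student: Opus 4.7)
The plan is to reduce the coweighting equation on $\emor{I}$ to a linear system on a core $J\subseteq\emor{I}$, to observe that with respect to a suitable total order on $J_{0}$ the zeta matrix is upper triangular with invertible diagonal, and to extract the stated formula from the Neumann series for its inverse.

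First I would note that $\emor{I}$ is itself a finite EI-category, since its endomorphisms $\mathrm{End}_{\emor{I}}((i,h))$ equal the centralizer $Z_{G_{i}}(h)\subseteq G_{i}$. Consequently $\Cha(\emor{I})$ divides $\Cha(I)$, hence is invertible in $R$. Because $\zeta_{\emor{I}}$ depends only on isomorphism classes of objects, the reduction from $\emor{I}$ to the core $J$ described in the paragraph before the lemma identifies $R$-coweightings on $J$ with essentially unique $R$-coweightings on $\emor{I}$, so it is enough to solve the linear system $\sum_{(i,h)}\lambda_{(i,h)}\zeta_{J}((i,h),(j,k))=1$ on $J_{0}$.

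Next I would equip $J_{0}$ with a linear extension of the relation ``$(i,h)\preceq(j,k)$ iff $J((i,h),(j,k))\neq\emptyset$''; this is a genuine partial order because the EI property together with skeletality of $J$ force any round-trip between distinct objects to be an isomorphism in $\emor{I}$, contradicting that $J$ has no non-identity isomorphisms between distinct objects. With respect to this order the matrix $Z=(\zeta_{J}((i,h),(j,k)))$ is upper triangular; moreover any morphism $(i,h)\to(i,k)$ in $\emor{I}$ requires $h$ and $k$ to be conjugate in $G_{i}$, so the diagonal entries are precisely $\#Z_{G_{i}}(h)=\#G_{i}/\#[h]$, each of which divides $\#G_{i}$ and is therefore invertible in $R$. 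Writing $Z=D(\Id+D^{-1}N)$ with $D$ the diagonal part and $N=Z-D$ strictly upper triangular, $D^{-1}N$ is nilpotent and the Neumann series
\begin{equation*}
  Z^{-1}=\sum_{n\geq 0}(-1)^{n}(D^{-1}N)^{n}D^{-1}
\end{equation*}
terminates, which yields both invertibility of $Z$ (hence existence and uniqueness of the coweighting on $J$) and an explicit expression for $Z^{-1}$.

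Finally, the coweighting is then $\lambda_{(j,k)}=\sum_{(i,h)\in J_{0}}(Z^{-1})_{(i,h),(j,k)}$, and expanding the $n$-th term of the Neumann series produces the $n+1$ diagonal factors $(D^{-1})_{(i_{l},h_{l})}=\#[h_{l}]/\#G_{i_{l}}$ together with a product of $n$ entries of $N$ between consecutive intermediate objects; summing the latter over all sequences of intermediate objects counts exactly the non-degenerate paths $(i,h)=(i_{0},h_{0})\to\cdots\to(i_{n},h_{n})=(j,k)$ of length $n$ in $J$. The main point I expect to have to verify carefully is the upper-triangular structure, in particular antisymmetry of $\preceq$ and the vanishing $\zeta_{J}((i,h),(i,k))=0$ for distinct conjugacy-class representatives over the same $i$; once this is in place, everything reduces to elementary linear algebra over $R$.
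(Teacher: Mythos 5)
Your proof is correct and follows essentially the same route as the paper: reduce to the core $J$, order $J_{0}$ so that $\zeta_{J}$ becomes upper triangular with diagonal entries $\#\mathrm{C}_{G_{i}}(h)$ invertible in $R$, and read off the coweighting as $\begin{bmatrix}\cdots&1&\cdots\end{bmatrix}\zeta_{J}^{-1}$. The only difference is that where the paper cites~\cite[\nopp 1.4]{leinster-euler} for the explicit path formula, you derive it directly from the terminating Neumann series for $\zeta_{J}^{-1}$, which is precisely the content of that reference, so nothing is missing.
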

\begin{proof}
  The data $(\zeta_{J}(h,k))_{h,k\in J_{0}}$ can be identified in an
  obvious way with a square matrix $\zeta_{J}$ with coefficients in
  $\Z$. For the first claim in the lemma, it suffices to prove that
  $\zeta_{J}$ is an invertible matrix in $R$, for then
  \begin{align*}
    \begin{bmatrix}
      \cdots&\lambda_{(i,h)}&\cdots
    \end{bmatrix}&=    \begin{bmatrix}
      \cdots&\lambda_{(i,h)}&\cdots
    \end{bmatrix}
    \left(\zeta_{J}\zeta_{J}^{-1}\right)\\
    &=\left(\begin{bmatrix}
      \cdots&\lambda_{(i,h)}&\cdots
    \end{bmatrix}\zeta_{J}\right)\zeta_{J}^{-1}\\
    &=\begin{bmatrix}
      \cdots&1&\cdots
    \end{bmatrix}\zeta_{J}^{-1}.
  \end{align*}
  For any $(i,h)\in J_{0}$, the endomorphism monoid is
  $G_{(i,h)}=\mathrm{C}_{G_{i}}(h)$, the centralizer of $h$, hence $J$
  is also a finite EI-category. This implies that we can find an
  object $(i,h)\in J_{0}$ which has no incoming arrows from other
  objects. Proceeding inductively we can thus choose an ordering of
  $J_{0}$ such that the matrix $\zeta_{J}$ is upper
  triangular. Consequently, $\det(\zeta_{J})=\prod_{(i,h)\in
    J_{0}}\#C_{G_{i}}(h)$ is invertible in $R$ by assumption.

  The proof in~\cite[1.4]{leinster-euler} goes through word for
  word to establish the formula given in the lemma (the relation
  between ``Möbius inversion'' and coweighting is given in~\cite[p.~28]{leinster-euler}).
\end{proof}

\begin{exa}\mbox{}
  \begin{enumerate}
  \item Let $I$ be a finite skeletal category with no non-identity
    endomorphisms (e.g.\ a partially ordered set). Then for any ring
    $R$ there is a unique $R$-coweighting on $I=\emor{I}$ given by
    (cf.~\cite[1.5]{leinster-euler})
    \begin{equation*}
      \lambda_{j}=\sum_{i\in I_{0}}\sum_{n\geq
        0}(-1)^{n}\#\{\text{non-degenerate paths of length }n\text{ from }i\text{ to } j\}
    \end{equation*}
    for any $j\in I_{0}$.
  \item Let $I=G$ be a finite group. By
    Example~\ref{exa:indexing-set}, the objects of the core of
    $\emor{G}$ can be identified with the conjugacy classes of
    $G$. For a $\Z[1/\#G]$-algebra $R$, the $R$-coweighting on
    $\emor{G}$ is given by
    \begin{equation*}
      \lambda_{[k]}=\frac{\#[k]}{\#G}
    \end{equation*}
    for any conjugacy class $[k]$ of $G$.
  \end{enumerate}
\end{exa}

\begin{exa}
  Let us go back to the situation considered in the introduction: Let
  $\smallucarre$ be the category of~\eqref{eq:pushout-category}. It
  follows from the first example above that for any ring $R$, the
  unique $R$-coweighting on $\smallucarre=\emor{\smallucarre}$ is given
  by
  \begin{equation*}
    \xymatrix{-1&1\ar[l]\\
      1\ar[u]}
  \end{equation*}
  and one notices that these are precisely the coefficients in the
  formula for the trace of the homotopy
  colimit~\eqref{eq:hty-pushout-result}. This is an instance of the
  following theorem.
\end{exa}

\begin{thm}\label{cor:hocolim_formula}
  Let $\D$ be a closed monoidal triangulated derivator of type
  $\mathbf{Dia}$, let $I$ be a finite EI-category in $\mathbf{Dia}$
  and suppose that $\Cha(I)$ is invertible in $R_{\D}$. If $S,T\in
  \D(\pt)_{0}$, $f:A\boxtimes S\to A\boxtimes T\in\D(I)_{1}$, with
  $A\in\D(I)_{0}$ fiberwise dualizable, then the object $p_{I!}A$ is
  dualizable in $\D(\pt)$, and we have
  \begin{align*}
    \mathrm{tr}(p_{I!}f)=\sum_{\substack{i\in I_{0}\slash\cong\\ [h]\in C_{i}}}\lambda_{(i,h)}\mathrm{tr}(h^{*}\circ
    f_{i})
  \end{align*}
  where $(\lambda_{(i,h)})_{(i,h)}$ is the $R_{\D}$-coweighting on
  $\emor{I}$.
\end{thm}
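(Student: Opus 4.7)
The plan is to combine Proposition \ref{pro:main} with an explicit identification of the map $\Phi$ for finite EI-categories. As a preliminary step, I must verify hypotheses \ref{hyp:q_1}--\ref{hyp:p_Iop}. Under the EI-assumption and invertibility of $\Cha(I)$ in $R_{\D}$, Lemma \ref{lem:eqrel-iso-ei} decomposes the relevant components of $\disc{I}$ as classifying categories of centralizer subgroups of the $G_i$, whose orders divide $\Cha(I)$; combined with the explicit description of additive derivators at finite groups (appendix \ref{sec:group}), this lets one verify the required Beck-Chevalley and projection isomorphisms. Any remaining verifications are to be handled by section \ref{sec:q}, which exists precisely to eliminate these technical hypotheses in the present context.

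Granted the hypotheses, Proposition \ref{pro:main} yields that $p_{I\#}A$ is dualizable in $\D(\pt)$ and that $\mathrm{tr}(p_{I\#}f) = \Phi(\mathrm{Tr}(f))$. By Corollary \ref{cor:trace-nat} and Lemma \ref{lem:eqrel-iso-ei}, the input on the right-hand side is the family $(\mathrm{tr}(h^{*}\circ f_{i}))_{(i,[h])}$ indexed by the objects of a chosen core $J\subset \emor{I}$. The map $\Phi$ (Definition \ref{dfi:Phi}) is built entirely from $R_{\D}$-linear operations and depends linearly on its argument, so the naturality statement of Remark \ref{rem:phi-naturality} upgrades to: there exist unique coefficients $\mu_{(i,h)} \in R_{\D}$ such that $\Phi(\alpha) = \sum_{(i,h)\in J_0} \mu_{(i,h)}\,\alpha_{(i,h)}$. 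The theorem is thereby reduced to proving $\mu_{(i,h)} = \lambda_{(i,h)}$.

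To identify the coefficients, I would test the equation against a family of building blocks. For each $(j,k)\in J_0$, the goal is to produce a fiberwise dualizable object $A^{(j,k)} \in \D(I)$ with $p_{I\#}A^{(j,k)} \cong \one$ and with local traces $\mathrm{tr}(h^{*}\circ \mathrm{id}_{A^{(j,k)}_{i}}) = \zeta_{\emor{I}}((i,h),(j,k))$. Such blocks are obtained by pushing forward to $I$ an idempotent summand of a suitable free object in $\D(G_j)$ cut out by the conjugacy class $[k]$; this uses the explicit appendix \ref{sec:group} description of $\D$ at finite groups and crucially requires $\#G_j$ to be invertible in $R_{\D}$. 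Substituting $(A^{(j,k)},\mathrm{id})$ into the equation of the previous paragraph produces exactly the defining system $1 = \sum_{(i,h)} \mu_{(i,h)} \zeta_{\emor{I}}((i,h),(j,k))$ of an $R_{\D}$-coweighting on $\emor{I}$, so by essential uniqueness (Lemma \ref{lem:coweighting}) $\mu_{(i,h)} = \lambda_{(i,h)}$. The main obstacle is this final step: constructing the $A^{(j,k)}$ and computing their homotopy colimits and local traces correctly, which is where the full force of appendix \ref{sec:group} and the invertibility of $\Cha(I)$ enter; by comparison, the hypothesis verification is largely routine once the EI-structure and section \ref{sec:q} are in hand.
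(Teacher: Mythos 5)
Your overall architecture is the same as the paper's: verify \labelcref{hyp:q_1,hyp:dia,hyp:limcolim,hyp:p_Iop} via section~\ref{sec:q}, invoke Proposition~\ref{pro:main} and Corollary~\ref{cor:trace-nat}, use Remark~\ref{rem:phi-naturality} to write $\Phi$ as $\alpha\mapsto\sum\mu_{(i,h)}\alpha_{(i,h)}$, and then pin down the coefficients by testing against building blocks indexed by $(j,k)$ so that the resulting equations are exactly the coweighting system, settled by the essential uniqueness in Lemma~\ref{lem:coweighting}. The problem is the building blocks themselves. You propose objects $A^{(j,k)}$ with \emph{identity} endomorphism satisfying $p_{I\#}A^{(j,k)}\cong\one$ and $\mathrm{tr}(h^{*}|_{A^{(j,k)}_{i}})=\zeta_{\emor{I}}((i,h),(j,k))$, cut out by an idempotent ``associated to the conjugacy class $[k]$''. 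Such objects do not exist in general. Already for $I=G$ a finite group (one object $j$, $i=j$), the requirement is a $G$-action on an object of $\D(\pt)$ whose character is $h\mapsto\#\{m\in G\mid mhm^{-1}=k\}=\#\mathrm{C}_{G}(k)\cdot\mathbb{1}_{[h]=[k]}$; by column orthogonality this equals $\sum_{\chi}\overline{\chi(k)}\chi$, which is only a \emph{virtual} character (the coefficients $\overline{\chi(k)}$ are not non-negative integers for $k\neq 1$), so it is not realized by any actual object with its identity endomorphism. Relatedly, class sums are central but not idempotent, and block idempotents of $R_{\D}[G_{j}]$ are indexed by irreducibles over a splitting field, not by conjugacy classes; the hypothesis that only $\Cha(I)$ is invertible gives you no such decomposition.

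The paper's proof avoids this by using a \emph{non-identity} endomorphism on an honest induced object: it takes $A=j_{\#}\one$ (identified with $\overline{j}_{\#}e_{j\#}S$, the induced regular representation) and $f=\overline{j}_{\#}l_{k}$, left translation by $k$. Then $\mathrm{tr}(h^{*}\circ f_{i})$ becomes a fixed-point count of the permutation $m\mapsto k^{-1}mh$ on $I(i,j)$, which is $\#\emor{I}(h,k)=\zeta_{\emor{I}}(h,k)$, and $\mathrm{tr}(p_{I\#}f)=\catid_{S}$ because passing to the colimit over $G_{j}$ collapses $l_{k}$ to the identity by transitivity of the regular action. You should replace your idempotent summands by this construction. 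Two smaller points: (i) for $S=T$ the coefficients a priori live in the center of $\D(\pt)(S,S)$, not in $R_{\D}$; one needs the uniqueness of the coweighting over that center to conclude they come from $R_{\D}$; (ii) your linearity argument only makes sense for $S=T$, and the general case requires an extra reduction (the paper uses $U=S\oplus T$ together with the naturality of $\Phi$), which your proposal omits entirely.
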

We will prove the theorem under the additional assumption that all of
the hypotheses~\labelcref{hyp:q_1,hyp:dia,hyp:limcolim,hyp:p_Iop} are
satisfied. In the next section we will show that they in fact
automatically hold (Proposition~\ref{pro:eliminate-hypo}).
\begin{proof}
  We view $\pi_{0}(\disc{I})$ as the set of pairs $(i,h)$ where $i$
  runs through a full set of representatives for the isomorphism
  classes of objects of $I$, and $h$ runs through a full set of
  representatives for the conjugacy classes of $G_{i}$ (use
  Example~\ref{exa:indexing-set}).

  Lemma~\ref{lem:input_eq} tells us that we may consider $\Phi$ as a
  group homomorphism
  \begin{equation*}
    \prod_{\pi_{0}(\disc{I})}\D(\pt)\left(S,T\right)\to\D(\pt)\left(S,T\right).
  \end{equation*}
  We first assume $S=T$, set $R=\D(\pt)\left(S,S\right)$. In this
  case, Remark~\ref{rem:phi-naturality} tells us that $\Phi$ is both
  left and right $R$-linear hence there exist $\lambda_{(i,h)}\in
  Z(R)$, the center of $R$, such that for every
  $g=(g_{(i,h)})_{(i,h)}$ in the domain,
  \begin{equation*}
    \Phi(g)=\sum_{(i,h)\in\pi_{0}(\disc{I})}\lambda_{(i,h)}g_{(i,h)}.
  \end{equation*}
  In particular, if $g=\mathrm{Tr}(f)$ we get
  \begin{align}\label{eq:fund-eq}
    \mathrm{tr}(p_{I!}f)&=\Phi(\mathrm{Tr}(f))&&\text{by Proposition~\ref{pro:main},}\notag\\
    &=\sum_{(i,h)}\lambda_{(i,h)}\mathrm{tr}(h^{*}\circ
    f_{i})&&\text{by Corollary~\ref{cor:trace-nat}.}
  \end{align}
  Now, fix $(j,k)\in\pi_{0}(\disc{I})$. Below we will define a
  specific endomorphism $f$ satisfying
  \begin{align}
    \mathrm{tr}(p_{I!}f)=\catid_{S}\label{eq:hty-trace}\intertext{and}
    \mathrm{tr}(h^{*}\circ f_{i})=\zeta_{\emor{I}}(h,k)\label{eq:local-traces}
  \end{align}
  for any $(i,h)\in\pi_{0}(\disc{I})$. Letting
  $(j,k)\in\pi_{0}(\disc{I})$ vary, (\ref{eq:fund-eq}) thus says that
  the $\lambda_{(i,h)}$ define a $Z(R)$-coweighting on the core of
  $\emor{I}$ and by Lemma~\ref{lem:coweighting} this is unique (by
  assumption, $\Cha(I)$ is invertible in $R_{\D}$ but then it must
  also be invertible in $Z(R)$). It must therefore be (the image of)
  the unique $R_{\D}$-coweighting on the core of $\emor{I}$ and this
  would complete the proof of the theorem in the case $S=T$.

  Before we come to the construction of $f$, let us explain how the
  general case (\ie{} when not necessarily $S=T$) can be deduced. Set
  $U=S\oplus T$ and denote by $\iota:S\to U$ and $\pi:U\to T$ the
  canonical inclusion and projection, respectively. By
  Remark~\ref{rem:phi-naturality}, the following diagram commutes:
  \begin{equation*}
    \xymatrix@C=35pt{\prod_{\pi_{0}(\disc{I})}\D(\pt)\left(U,U\right)\ar[r]^-{\Phi_{U,U}}\ar[d]_{\iota^{*}}&\D(\pt)\left(U,U\right)\ar[d]^{\iota^{*}}\\
      \prod_{\pi_{0}(\disc{I})}\D(\pt)\left(S,U\right)\ar[r]^-{\Phi_{S,U}}\ar[d]_{\pi_{*}}&\D(\pt)\left(S,U\right)\ar[d]^{\pi_{*}}\\
      \prod_{\pi_{0}(\disc{I})}\D(\pt)\left(S,T\right)\ar[r]^-{\Phi_{S,T}}&\D(\pt)\left(S,T\right)\mathrlap{.}}
  \end{equation*}
  Given a family $(g_{(i,h)})_{(i,h)}$ in the bottom left, there is a
  canonical lift $(\widetilde{g_{(i,h)}})_{(i,h)}$ in the top left,
  similarly for the right hand side. In particular, given $S,T,A,f$ as
  in the statement of the theorem,
  \begin{align*}
    \mathrm{tr}(p_{I!}f)&=\Phi_{S,T}(\mathrm{Tr}(f))&&\text{by the
      proposition,}\\
    &=\Phi_{S,T}(\pi_{*}\iota^{*}(\widetilde{\mathrm{Tr}(f)_{(i,h)}})_{(i,h)})\\
    &=\pi_{*}\iota^{*}\Phi_{U,U}((\widetilde{\mathrm{Tr}(f)_{(i,h)}})_{(i,h)})\\
    &=\pi_{*}\iota^{*}{\textstyle\sum_{(i,h)}}\lambda_{(i,h)}\widetilde{\mathrm{tr}(h^{*}\circ
      f_i)}&&\text{by the previous
      argument,}\\
    &={\textstyle\sum_{(i,h)}}\pi\lambda_{(i,h)}\widetilde{\mathrm{tr}(h^{*}\circ
      f_{i})}\iota\\
    &={\textstyle\sum_{(i,h)}}\lambda_{(i,h)}\pi\widetilde{\mathrm{tr}(h^{*}\circ
      f_{i})}\iota\\
    &={\textstyle\sum_{(i,h)}}\lambda_{(i,h)}\mathrm{tr}(h^{*}\circ f_{i}).
  \end{align*}
  This completes the argument in the general case.

  Now we come to the construction of the endomorphism $f$ mentioned
  above. We will freely use the fact that for any
  finite group $G\in \mathbf{Dia}_{0}$ whose order is invertible in
  $R_{\D}$ (such as $G_{i}$ for all $i\in I_{0}$ by assumption), the
  underlying diagram functor
  \begin{equation*}
    \mathrm{dia}_{G}:\D(G)\to\mathbf{CAT}(G^{\op}, \D(\pt))
  \end{equation*}
  is fully faithful. We postpone the proof of this to
  appendix~\ref{sec:group}.

  Fix $(j,k)\in\pi_{0}(\disc{I})$. Denote by $e_{j}:\pt\to G_{j}$ the
  unique functor; by~\ref{dia:closed}, this is a functor in
  $\mathbf{Dia}$. Then $e_{j!}S$ is the right regular representation
  of $G_{j}^{\op}$ associated to $S$ (for more details, see
  appendix~\ref{sec:group}); we denote the action by $r_{(\plho)}$.
  Left translation by $k$, $l_{k}$, defines a
  $G_{j}^{\op}$-endomorphism of $e_{j!}S$. By transitivity of the
  action,
  \begin{equation*}
    p_{G_{j}!}l_{k}:S=p_{G_{j}!}e_{j!}S=e_{j!}S/G_{j}^{\op}\to e_{j!}S/G_{j}^{\op}=p_{G_{j}!}e_{j!}S=S
  \end{equation*}
  is just the identity.

  Let $\overline{j}:G_{j}\to I$ be the fully faithful inclusion
  pointing $j$ and set $f=\overline{j}_{!}l_{k}$. To be
  completely precise, we should set $A=j_{!}\one$, and $f$ to be the
  endomorphism of $A\boxtimes S$ induced by
  $\overline{j}_{!}l_{k}$ via the canonical isomorphism
  \begin{equation*}
    j_{!}\one\boxtimes S\xleftarrow[\sim]{(\ref{eq:mon-der-external})} j_{!}(\one\boxtimes S)\xrightarrow{\sim}j_{!}S\xrightarrow{\sim}\overline{j}_{!}e_{j!}S.
  \end{equation*}
  However, for the sake of clarity, we will continue to use this
  identification implicitly.

  Then we have
  \begin{align*}
    \mathrm{tr}(p_{I!}\overline{j}_{!}l_{k}) &=\mathrm{tr}(p_{G_{j}!}l_{k})\\
    &=\mathrm{tr}(\catid_{S})&&\text{as seen above,}\\
    &=\catid_{S}
  \end{align*}
  \ie{}~(\ref{eq:hty-trace}) holds.

  For~(\ref{eq:local-traces}) we must understand $h^{*}\circ
  i^{*}\overline{j}_{!}l_{k}$. Write $\mathrm{S}(m)$ for the
  stabilizer subgroup of $m\in I(i,j)$ in $G_{j}$ and consider the
  following comma square in $\mathbf{Dia}$
  \begin{equation*}
    \xymatrix{\xtwocell[1,1]{}\omit{^{\eta}}\coprod_{m}\mathrm{S}(m)\ar[r]^-{w}\ar[d]_{p}&G_{j}\ar[d]^{\overline{j}}\\
      \pt\ar[r]_{i}&I}
  \end{equation*}
  where the disjoint union is indexed by a full set of representatives
  for the $G_{j}$-orbits of $I(i,j)$, $w$ is the canonical inclusion
  on each component, and $\eta$ is $m$ on the component of
  $m$. Under the identification $i^{*}\overline{j}_{!}\cong
  p_{!}w^{*}$ (by~\ref{der:kanextptwise}),
  \begin{equation*}
    i^{*}\overline{j}_{!}e_{j!}S\cong p_{!}w^{*}e_{j!}S\cong\bigoplus_{m}
    \left(e_{j!}S/\mathrm{S}(m)\right)\cong \bigoplus_{m}\bigoplus_{G_{j}/\mathrm{S}(m)}S,
    \end{equation*}
    and $i^{*}\overline{j}_{!}l_{k}$ corresponds to the morphism which
    takes the $g\mathrm{S}(m)$-summand identically to the
    $k^{-1}g\mathrm{S}(m)$-summand. It follows that under the
    identification $i^{*}\overline{j}_{!}e_{j!}S\cong i^{*}j_{!}S\cong
    \oplus_{I(i,j)}S$ (again by~\ref{der:kanextptwise}), it
    corresponds to the morphism which takes the $m$-summand
    identically to the $k^{-1}m$-summand.

    Writing out explicitly the Beck-Chevalley transformation
    above we obtain the horizontal arrows in
    the following diagram:
    \begin{equation*}
      \xymatrix{\oplus_{I(i,j)}\ar[r]^-{\mathrm{adj}}\ar[d]_{m\mapsto
        m
        h}&\oplus_{I(i,j)}j^{*}j_{!}\ar[r]^{(m^{*})_{m}}\ar[d]^{m\mapsto
      m
      h}&\oplus_{I(i,j)}i^{*}j_{!}\ar[r]^-{\sum}&i^{*}j_{!}\ar[d]^{h^{*}}\\
  \oplus_{I(i,j)}\ar[r]^-{\mathrm{adj}}&\oplus_{I(i,j)}j^{*}j_{!}\ar[r]^{(m^{*})_{m}}&\oplus_{I(i,j)}i^{*}j_{!}\ar[r]^-{\sum}&i^{*}j_{!}\mathrlap{.}}
    \end{equation*}
    Obviously, the diagram is commutative. In total we get that
    $h^{*}\circ i^{*}\overline{j}_{!}l_{k}$ corresponds to the morphism
    which takes the $m$-summand identically to the $k^{-1}m
    h$-summand. It follows that the trace of this composition is equal
    to
    \begin{align*}
      \mathrm{tr}(h^{*}\circ
      i^{*}\overline{j}_{!}l_{k})&=\#\{m\in I(i,j)\mid k^{-1}m h=m\}\\
      &=\#\emor{I}(h,k)\\
      &=\zeta_{\emor{I}}(h,k).
    \end{align*}
  \end{proof}

\section{$\Q$-linearity and triangulation}
\label{sec:q}
Let $\D$ be a monoidal triangulated derivator. In this section we will
show that for any finite EI-category $I\in\mathbf{Dia}_{0}$, if
$\Cha(I)$ is invertible in $R_{\D}$ then all
hypotheses~\labelcref{hyp:q_1,hyp:dia,hyp:limcolim,hyp:p_Iop}
automatically hold. The main tool used in the proof is
Lemma~\ref{lem:generators} below, in essence suggested to me by Joseph
Ayoub, where it is shown how invertibility of $\Cha(I)$ in $R_{\D}$
and $\D$ being triangulated imply the existence of nice generators for
$\D(I)$. In fact, this is the only place in the article where the
triangulated structure plays any role.

Recall that a subcategory of a triangulated category is called
\emph{thick} if it is a triangulated subcategory and closed under
direct factors. If $\mathcal{T}$ is a triangulated category and
$S\subset \mathcal{T}_{0}$ a family of objects we denote by
\begin{equation*}
  \langle S\rangle \qquad (\text{resp. }\langle S\rangle_{s})
\end{equation*}
the triangulated (resp. thick) subcategory generated by $S$.

Let $I$ be an EI-category. If $i\in I_{0}$ is an object we denote by
$G_{i}$ its automorphism group $I(i,i)$, and by $\overline{i}:G_{i}\to
I$ the fully faithful embedding of the ``point'' $i$ into $I$. This is
to distinguish it from the inclusion $i:\pt\to I$.
\begin{lem}\label{lem:generators} Let $\D$ be a triangulated
  derivator, and let $I\in\mathbf{Dia}_{0}$ be a finite
  EI-category. Then we have the following equality:
  \begin{align*}
    \D(I)&=\langle \overline{i}_{!}A\mid i\in I_{0}, A\in
    \D(G_{i})_{0}\rangle.
  \end{align*}
  Suppose that for all $i\in I_{0}$, the canonical functor
  $e_{i}:\pt\to G_{i}$ induces a faithful functor
  $e_{i}^{*}:\D(G_{i})\to\D(\pt)$. Then we also have the following
  equality:
  \begin{align*}
    \D(I)&=\langle i_{!}A\mid i\in I_{0}, A\in\D(\pt)_{0}\rangle_{s}.
  \end{align*}

  All these statements remain true if we replace $(\plho)_{!}$ by
  $(\plho)_{*}$ everywhere.
\end{lem}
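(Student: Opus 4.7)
The plan is to prove both equalities by induction on $n = |I_{0}/{\cong}|$, the number of isomorphism classes of $I$. The base case $n \le 1$ is immediate: either $I = \emptyset$, or, for any $j \in I_{0}$, $\overline{j}\colon G_{j}\to I$ is an equivalence of categories, so $\overline{j}^{*}$ is an equivalence and every $X \in \D(I)_{0}$ satisfies $X\cong \overline{j}_{\#}\overline{j}^{*}X$; this handles both the $(\cdot)_{\#}$ and the $(\cdot)_{*}$ version simultaneously.

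For the inductive step of the $(\cdot)_{\#}$-statement, I pick a maximal isomorphism class $[j]$ of $I$ (this exists because $I$ is finite and, as $I$ is an EI-category, the relation ``$[i]\le[k]\iff I(i,k)\neq\emptyset$'' is a partial order on $I_{0}/{\cong}$). Let $Z\subset I$ be the full subcategory on objects isomorphic to $j$ and $U = I\setminus Z$; both lie in $\mathbf{Dia}$ by \ref{dia:closed}. Maximality of $[j]$ forbids morphisms from $Z$ to $U$, so $z\colon Z\hookrightarrow I$ is a cosieve and $u\colon U\hookrightarrow I$ is a sieve; by the pointwise Kan extension formula \ref{der:kanextptwise} this gives $u^{*}z_{!} = 0$ and $z^{*}u_{*} = 0$ (both are extensions by zero). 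The fibre of the unit $X\to u_{*}u^{*}X$ thus has vanishing $u^{*}$-part and $z^{*}$-part equal to $z^{*}X$, and since the counit $z_{!}z^{*}F\to F$ is invertible for any $F$ with $u^{*}F = 0$ (easy check via \ref{der:conservative}), this fibre is $z_{!}z^{*}X$. The resulting gluing triangle in the triangulated derivator $\D$ is
\begin{equation*}
  z_{!}z^{*}X \to X \to u_{*}u^{*}X \to^{+}.
\end{equation*}

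Now $z_{!}z^{*}X \cong \overline{j}_{\#}A$ for some $A\in\D(G_{j})_{0}$ (since $\overline{j}$ factors as an equivalence $G_{j}\simeq Z$ followed by $z$), so it lies in $\mathcal{T} := \langle \overline{i}_{\#}A \mid i\in I_{0},\, A\in\D(G_{i})\rangle$. By the inductive hypothesis applied to the finite EI-category $U$, $u^{*}X$ lies in the analogous triangulated subcategory of $\D(U)$, so it suffices to show $u_{*}B\in\mathcal{T}$ for every generator $B = \overline{i}_{\#}^{U}A$ (with $i\in U_{0}$ and $\overline{i}^{U}\colon G_{i}\to U$). For this I compare $u_{*}B$ to $u_{!}B$ via the canonical natural transformation $u_{!}\to u_{*}$ (adjoint to $u^{*}u_{!}\xrightarrow{\sim}\mathrm{id}\xrightarrow{\sim}u^{*}u_{*}$); its cofibre vanishes on $U$ and, by the same supported-on-$Z$ argument, equals $z_{!}z^{*}u_{!}B[1]$, yielding a triangle $u_{!}B\to u_{*}B\to z_{!}z^{*}u_{!}B[1]\to^{+}$. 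Both endpoints lie in $\mathcal{T}$: $u_{!}B = (u\overline{i}^{U})_{\#}A = \overline{i}_{\#}A$ by definition, and $z_{!}z^{*}u_{!}B$ is in the essential image of $z_{!}$, hence of the form $\overline{j}_{\#}C$. Thus $u_{*}B\in\mathcal{T}$, forcing $u_{*}u^{*}X\in\mathcal{T}$ and finally $X\in\mathcal{T}$. The $(\cdot)_{*}$-statement is argued dually from a minimal $[j]$: $Z$ becomes a sieve, $U$ a cosieve, the gluing triangle is $u_{!}u^{*}X\to X\to z_{*}z^{*}X\to^{+}$, and the triangle $u_{!}B\to u_{*}B\to z_{*}z^{*}u_{*}B\to^{+}$ (arising from the corresponding fibre computation $z_{*}z^{*}u_{*}B[-1]\to u_{!}B\to u_{*}B$) expresses $u_{!}B$ in terms of the generators $u_{*}B = \overline{i}_{*}A$ and $z_{*}(\cdot) = \overline{j}_{*}(\cdot)$.

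For (2), using (1) and the factorisation $i = \overline{i}\circ e_{i}$, it suffices to prove that $\D(G_{i}) = \langle e_{i\#}B\mid B\in\D(\pt)\rangle_{s}$ whenever $e_{i}^{*}$ is faithful: then the additive triangulated functor $\overline{i}_{\#}$ carries this thick subcategory of $\D(G_{i})$ into the thick subcategory of $\D(I)$ generated by $\{\overline{i}_{\#}e_{i\#}B\}_{B} = \{i_{\#}B\}_{B}$, and combining with (1) yields $\D(I)\subset\langle i_{\#}B\rangle_{s}$. The remaining statement about derivators evaluated at a finite group is handled in appendix \ref{sec:group}: under the faithfulness hypothesis, a norm-style splitting exhibits every $X\in\D(G_{i})$ as a direct summand of $e_{i\#}e_{i}^{*}X$, hence $X$ lies in the thick subcategory generated by $e_{i\#}\D(\pt)$; the $(\cdot)_{*}$-version follows analogously. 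The main obstacle is the middle step of the induction, namely producing the gluing triangle and identifying $\mathrm{cofib}(u_{!}B\to u_{*}B)$ as $z_{!}z^{*}u_{!}B[1]$; both rest on the extension-by-zero behaviour of sieves and cosieves together with the triangulated structure of $\D$.
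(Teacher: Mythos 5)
Your overall strategy is viable and genuinely different from the paper's, but it contains a concrete variance error that must be fixed before it works in this paper's conventions. The paper's proof is shorter: assuming $I$ skeletal and picking a maximal object $i$, it takes the cone $C$ of the counit $\overline{i}_{\#}\overline{i}^{*}B\to B$, checks $i^{*}C=0$, concludes $C\cong u_{\#}B'$ for the inclusion $u$ of the complement, and closes the induction using $u_{\#}\overline{j}_{\#}=\overline{j}_{\#}$ --- so the third term of its triangle is already visibly built from generators and no analysis of $u_{*}$ of generators is ever required. Your recollement route (gluing triangle plus the comparison triangle $u_{\#}B\to u_{*}B\to z_{\#}z^{*}u_{\#}B[1]$) also works, at the cost of that extra conversion step; it has the mild advantage of removing a whole isomorphism class at once rather than assuming $I$ skeletal. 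The error: in this paper the derivator is a contravariant $2$-functor, and axiom~\ref{der:kanextptwise} computes $j^{*}u_{\#}$ as a colimit over the \emph{under}-category $j\backslash U$ and $j^{*}u_{*}$ as a limit over the \emph{over}-category $U/j$. Consequently, for a sieve $u:U\inj I$ it is $u_{\#}$ (not $u_{*}$) that is extension by zero, and for a cosieve $z:Z\inj I$ it is $z_{*}$ (not $z_{\#}$). With $[j]$ maximal, your identities $u^{*}z_{!}=0$ and $z^{*}u_{*}=0$ are therefore false here: for $j'\in U$, $j'^{*}z_{\#}A$ is a colimit over the generally nonempty category of arrows $j'\to Z$, and likewise $j^{*}u_{*}$ for $j\in Z$ is a limit over the nonempty category of arrows $U\to j$. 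The correct gluing triangle for maximal $[j]$ in these conventions is $u_{\#}u^{*}X\to X\to z_{*}z^{*}X\to^{+}$. Everything in your argument is repaired either by taking $[j]$ \emph{minimal} (so that $Z$ is a sieve and $U$ a cosieve, after which your triangles hold verbatim with $(\cdot)_{!}=(\cdot)_{\#}$), or equivalently by interchanging $\#$ and $*$ throughout --- which is essentially the dual case you already wrote out. Your treatment of the second equality and of the $(\cdot)_{*}$ variants agrees with the paper's (faithfulness of $e_{i}^{*}$ makes the counit $e_{i\#}e_{i}^{*}B\to B$ a split epimorphism in a triangulated category, so $B$ is a direct summand and the thick closure suffices).
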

\begin{rem}\label{rem:faithful-fiber}
  We will prove in appendix~\ref{sec:group} that if $n$ is invertible
  in $R_{\D}$ then $e:\pt\to G$ induces a faithful functor
  $e^{*}:\D(G)\to \D(\pt)$ for any group $G\in\mathbf{Dia}_{0}$ of
  order $n$. In particular, if $\Cha(I)$ is invertible in $R_{\D}$
  then the second equality in Lemma~\ref{lem:generators} holds.
\end{rem}
\begin{proof}[Proof of Lemma~\ref{lem:generators}]
  Note that since $I\in\mathbf{Dia}_{0}$ so is $G_{i}$, $i\in I_{0}$,
  by~\ref{dia:closed}. Therefore, the statement of the lemma at least
  makes sense.

  The first equality is proved by induction on the number $n$ of
  objects in $I$. Clearly, we may assume $I$ to be skeletal. If $n=1$,
  the claim is obviously true. If $n>1$ we find an object $i\in I_{0}$
  which is maximal in the sense that the implication
  $I(i,j)\neq\emptyset\Rightarrow i=j$ holds. For any
  $B\in\D(I)_{0}$, consider the morphism
  \begin{equation*}
    \mathrm{adj}:\overline{i}_{!}\overline{i}^{*}B\to B
  \end{equation*}
  and let $C$ be the cone. One checks easily that $i^{*}\mathrm{adj}$
  is an isomorphism hence $i^{*}C=0$ which implies that $C$ is of the
  form $u_{!}B'$, some $B'\in\D(U)_{0}$ where $u:U\inj I$ is the open
  embedding of the full subcategory of objects different from $i$ in
  $I$ (see~\cite[8.11]{cisinski-neeman}). By induction, $B'\in\langle
  \overline{j}_{!}A\mid j\in U_{0}, A\in
  \D(G_{j})_{0}\rangle$, hence it suffices to prove
  \begin{equation*}
    u_{!}\langle \overline{j}_{!}A\mid j\in U_{0}, A\in
    \D(G_{j})_{0}\rangle\subset \langle \overline{j}_{!}A\mid j\in I_{0}, A\in
    \D(G_{j})_{0}\rangle.
  \end{equation*}
  But this follows from the fact that $u_{!}$ is a triangulated
  functor and $u_{!}\overline{j}_{!}=\overline{j}_{!}$.

  For the second equality, it will follow from the first as soon as we
  prove, for each $i\in I_{0}$,
  \begin{equation}
    \D(G_{i})=\langle e_{i!}A\mid A\in\D(\pt)_{0}\rangle_{s}.\label{eq:group-alt-description}
  \end{equation}
  So let $B\in\D(G_{i})_{0}$ and consider the counit
  $e_{i!}e_{i}^{*}B\to B$. By assumption this is an epimorphism. But
  in a triangulated category every epimorphism is complemented, \ie{}
  \begin{equation*}
    e_{i!}e_{i}^{*}B\cong B\oplus B',
  \end{equation*}
  some $B'\in\D(G_{i})_{0}$. This proves~(\ref{eq:group-alt-description})
  and hence the second equality.

  The last claim of the lemma can be established by dualizing the
  whole proof.
\end{proof}

\begin{pro}\label{pro:eliminate-hypo}
  Let $\D$ be a monoidal triangulated derivator, let $I$ be a finite
  EI-category in $\mathbf{Dia}$, and suppose that $\Cha(I)$ is
  invertible in $R_{\D}$. Then all
  hypotheses~\labelcref{hyp:q_1,hyp:dia,hyp:limcolim,hyp:p_Iop} are
  satisfied.
\end{pro}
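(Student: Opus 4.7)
The plan is to verify the four hypotheses in turn, reducing each one via triangulated generation arguments to calculations over finite groups whose orders are invertible in $R_\D$, where the relevant appendix results on $\D(G)$ can be applied.

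First I would dispatch (H2), which is the most direct. As observed in the proof of Lemma~\ref{lem:eqrel-iso-ei}, $\disc{I}$ is a groupoid whenever $I$ is an EI-category, so each connected component $\gamma$ of $\disc{I}$ is itself a connected groupoid and hence equivalent to a single group. Chasing through the description of morphisms in $\disc{I}$, one identifies the automorphism group of an object $(i,h)\in\gamma$ with the centralizer $\mathrm{C}_{G_i}(h)\subset G_i$. Since $\#\mathrm{C}_{G_i}(h)$ divides $\#G_i$ and $\Cha(I)$ is invertible in $R_\D$, the order of this centralizer is invertible in $R_\D$. The appendix result on derivators evaluated at finite groups of invertible order then yields that $p_\gamma^*:\D(\pt)\to\D(\gamma)$ is fully faithful, which is (H2).

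For (H1), (H3) and (H4) the common strategy is that each is an assertion that a certain natural transformation $\alpha$ between triangulated functors is an isomorphism. Both sides are triangulated in the relevant variable and commute with arbitrary direct summands, so by Lemma~\ref{lem:generators} (applied to $I$ or $I^{\op}$, which is again a finite EI-category with the same characteristic) it suffices to check that $\alpha$ is invertible on generators of the form $\overline{i}_{\#}A$ for $i\in I_0$ and $A\in\D(G_i)$, or dually $\overline{i}_{*}A$. Since $\overline{i}:G_i\to I$ is fully faithful and, in the cases relevant for (H1), (H3), (H4), sits inside a fibration or opfibration diagram obtained by pulling back along $\overline{i}$, one reduces each statement via Lemma~\ref{lem:fibration-exact} and Corollary~\ref{cor:fibration} to the analogous statement with $I$ replaced by the single group $G_i$. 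For a finite group $G$ with $\#G$ invertible in $R_\D$, the appendix yields the crucial ambidexterity-type input: $p_{G\#}\simeq p_{G*}$ compatibly with the structure we need, and any $A\in\D(G)$ is a direct summand of $e_\#e^*A$ (as in the proof of the second equality in Lemma~\ref{lem:generators}). Consequently the projection-type morphisms in (H1), (H4) reduce to the standard projection formula for $p_G$ (Corollary~\ref{cor:fibration}\ref{cor:fibration.proj}), and the exchange morphism in (H3) reduces to the ambidexterity identification $p_{G\#}\simeq p_{G*}$.

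The main obstacle will be (H3), since it is not a standard projection or base-change isomorphism but an exchange between $(\cdot)_\#$ and $(\cdot)_*$ for two different functors. The key point requiring real work is showing that the reduction via Lemma~\ref{lem:generators} actually converts this exchange into the ambidexterity statement for each $G_i$: one must verify that the exchange morphism restricted to objects of the form $(\overline{i}\times\overline{j})_{\#}B$ decomposes, using the compatibility of $p_2$ and $p_1$ with the product structure on $I^{\op}\times I$ and Lemma~\ref{lem:fibration-exact} (applied to projections, which are opfibrations), into a tensor product of ambidexterity identifications for the individual groups $G_i$ and $G_j$. Once this book-keeping is carried out, the appendix result on $\D(G)$ for $\#G$ invertible in $R_\D$ closes the argument.
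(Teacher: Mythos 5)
Your treatment of (H2) is correct and is essentially the paper's argument: each component of $\disc{I}$ is a connected groupoid equivalent to a centralizer $\mathrm{C}_{G_i}(h)$, whose order is invertible, and the appendix gives full faithfulness of $p_\gamma^*$. For the other three hypotheses, however, your reductions as stated contain two genuine gaps. First, (H1) and (H4) concern \emph{right} Kan extensions ($q_{1*}$ and $p_{I^{\op}*}$), so they are not instances of the projection formula of Corollary~\ref{cor:fibration}(1), which is a statement about $u_{\#}$ along a fibration; the $*$-analogue is false for general diagrams and holds here only because the relevant (co)products are \emph{finite} and the derivator is additive, so that finite products coincide with finite coproducts and commute with $\otimes$ and with $p_{I^{\op}*}$. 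That finiteness-plus-additivity step is the actual content of the verification, and it is missing from your argument. Second, you invoke ``$p_{G\#}\simeq p_{G*}$'' as an output of the appendix, but the appendix proves only that $\mathrm{dia}_G$ is fully faithful and that the canonical map $e_{\#}\to e_{*}$ is invertible; there is no canonical comparison map $p_{G\#}\to p_{G*}$ at all, so even formulating the ambidexterity you want to use requires constructing a norm map, which neither the appendix nor your sketch provides. Relatedly, for (H1) there is nothing to generate: the morphism is natural in $T\in\D(\pt)$ with $\one\in\D(\tw{I})$ fixed, and a generation argument in the $\D(\pt)$-variable is vacuous.

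The paper's route avoids both problems. For (H1) it uses no generation and no hypothesis on $\Cha(I)$: since $q_1$ is an opfibration with finite discrete fibers $I(i,j)$, Lemma~\ref{lem:fibration-exact} reduces the claim to the commutation of $\prod_{I(i,j)}$ with $\otimes$, which is additivity. For (H3) it applies the \emph{second} equality of Lemma~\ref{lem:generators} to $I^{\op}\times I$ — this is the only place the appendix enters, via faithfulness of $e_i^*$ — to reduce to generators $(I^{\op}\times i)_{\#}X$ with one factor concentrated at a point; the exchange then becomes the statement that $p_{I^{\op}*}$ commutes with the finite direct sum $\oplus_{I(j,i)}$, checked fiberwise. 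For (H4) it similarly reduces to $A=i_{*}C$ (using that $\cdot\otimes B$ is triangulated) and again concludes by additivity of $\otimes$ on finite sums. If you want to salvage your outline, replace the appeals to Corollary~\ref{cor:fibration}(1) and to $p_{G\#}\simeq p_{G*}$ by these finite-(co)product commutation arguments.
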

\begin{proof}
  \begin{enumerate}
  \item For~\labelcref{hyp:q_1} we will prove more generally that
    \begin{equation}
      q_{1*}A\otimes
      \restr{T}{I^{\op}\times I}\to q_{1*}(A\otimes \restr{T}{\tw{I}})\label{eq:h1-arrow}
    \end{equation}
    is invertible for all $T\in\D(\pt)_{0}$, $A\in\D(\tw{I})_{0}$. Also we
    will only need that $I$ has finite $\hm$-sets.

    Fix $i,j\in I_{0}$ and consider the following pullback square:
    \begin{equation*}
      \xymatrix{I(i,j)\ar[r]^-{r_{i,j}}\ar[d]_{p_{I(i,j)}}&\tw{I}\ar[d]^{q_{1}}\\
        \pt\ar[r]_-{(i,j)}&I^{\op}\times I\mathrlap{.}}
    \end{equation*}
    Since $q_{1}$ is an opfibration, Lemma~\ref{lem:fibration-exact}
    tells us that the first vertical morphisms on the left and on the
    right in the following diagram are invertible:
    \begin{equation*}
        \xymatrix{(i,j)^{*}(q_{1*}A\otimes
      \restr{T}{I^{\circ}\times I})\ar[r]\ar[d]_{\sim}& (i,j)^{*}q_{1*}(A\otimes
      \restr{T}{\mathrm{tw}(I)})\ar[d]^{\sim}\\
      p_{I(i,j)*}r_{i,j}^{*}A\otimes T\ar[d]_{\sim}\ar[r]&p_{I(i,j)*}r_{i,j}^{*}(A\otimes\restr{T}{\mathrm{tw}(I)})\ar[d]^{\sim}\\
      \prod_{h\in I(i,j)}h^{*}r_{i,j}^{*}A\otimes T\ar[r]&\prod_{h\in I(i,j)}(h^{*}r_{i,j}^{*}A\otimes T)\mathrlap{.}}
    \end{equation*}
    Here, $h:\pt\to I(i, j)$ is the functor defined by the object $h$
    of the discrete category $I(i, j)$, and the axiom~\ref{der:coprod}
    is used for the second vertical morphisms on the left and on the
    right. Clearly both squares commute. Moreover, the bottom
    horizontal arrow is invertible since $I(i,j)$ is finite and the
    internal product in $\D(\pt)$ is additive
    (Lemma~\ref{lem:mon-triangulated}). Therefore also the top
    horizontal arrow is invertible which implies
    (by~\ref{der:conservative}, and letting $i$ and $j$ vary)
    that~(\ref{eq:h1-arrow}) is.
  \item For~\labelcref{hyp:dia}, let $\gamma$ be a connected component
    of $\disc{I}$. Since $I$ is a finite EI-category, $\gamma$ is
    equivalent to a finite group $G$ whose order divides $\Cha(I)$. As
    explained in Remark~\ref{rem:faithful-fiber}, this implies that
    $e_{G}^{*}$ is faithful ($e_{G}:\pt\to G$). Since $p_{G}^{*}$ is a
    section of $e_{G}^{*}$, it follows that $p_{G}^{*}$ is fully
    faithful.
  \item \labelcref{hyp:limcolim} states that $p_{I!}p_{2*}\to
    p_{I^{\op}*}p_{1!}$ is invertible. Since also $I^{\op}\times I$ is
    a finite EI-category and since $\Cha(I^{\op}\times I)=\Cha(I)$ we
    may prove this on objects in the image of $(\catid_{I^{\op}}\times
    i)_{!}$, where $i\in I_{0}$, by the previous lemma. Consider the
    following square:
    \begin{equation*}
      \xymatrix{p_{I!}p_{2*}(\catid_{I^{\op}}\times
        i)_{!}\ar[r]&p_{I^{\op}*}p_{1!}(\catid_{I^{\op}}\times i)_{!}\ar[d]^{\sim}\\
        p_{I!}i_{!}p_{I^{\op}*}\ar[r]_-{\sim}\ar[u]&p_{I^{\op}*}\mathrlap{.}}
    \end{equation*}
    It is easy to see that it commutes hence it suffices to prove
    invertibility of the left vertical arrow.

    For this we use~\ref{der:conservative}, so fix $j\in I_{0}$ an
    object. Then
    \begin{align*}
      j^{*}i_{!}p_{I^{\op}*}&\cong\oplus_{I(j,i)}p_{I^{\op}*},\\
      j^{*}p_{2*}(\catid_{I^{\op}}\times i)_{!}&\cong
      p_{I^{\op}*}(\catid_{I^{\op}}\times
      j)^{*}(\catid_{I^{\op}}\times i)_{!}\\
      &\cong p_{I^{\op}*}\oplus_{I(j,i)}.
    \end{align*}
    The claim follows since $p_{I^{\op}*}$ is additive. (It is easy to see
    that this identification is compatible with the vertical arrow
    above.)
  \item Since also $I^{\op}$ is a finite EI-category and
    $\Cha(I^{\op})=\Cha(I)$, we may replace $I$ by
    $I^{\op}$. \labelcref{hyp:p_Iop} then is the statement that
    \begin{equation*}
      p_{I*}A\otimes B\to p_{I*}(A\otimes p_{I}^{*}B)
    \end{equation*}
    is invertible, and by Lemma~\ref{lem:generators} we may assume
    $A=i_{*}C$, some $C\in\D(\pt)_{0}$ and $i\in I_{0}$ (here we use
    that $\plho\otimes B$ and $\plho\otimes p_{I}^{*}B$ both take
    distinguished triangles to distinguished triangles, by
    Lemma~\ref{lem:mon-triangulated}).

    Clearly, the following square commutes:
    \begin{equation*}
      \xymatrix{\ar[d]_{\sim}p_{I*}i_{*}C\otimes B\ar[r]& p_{I*}(i_{*}C\otimes
        p_{I}^{*}B)\ar[d]\\
        C\otimes B&\ar[l]^-{\sim}p_{I*}i_{*}(C\otimes B)\mathrlap{,}}
    \end{equation*}
    hence it suffices to prove invertible the vertical arrow on the
    right. Again we use~\ref{der:conservative}, so let $j\in I_{0}$
    an object. Then:
    \begin{align*}
      j^{*}i_{*}C\otimes
      j^{*}p_{I}^{*}B&\cong\oplus_{I(i,j)}C\otimes B,\\
      j^{*}i_{*}(C\otimes B)&\cong\oplus_{I(i,j)}(C\otimes B).
    \end{align*}
    Again, the claim follows from the additivity of the functor
    $\plho\otimes B$.
  \end{enumerate}
\end{proof}

\appendix{}
\titlelabel{Appendix~\thetitle.\quad}
\section{Properties of the external hom}
\label{sec:app1}
In this section we want to give proofs for the properties of the
external hom listed in section~\ref{sec:ehom}. We take them up one by
one. Throughout the section we fix a closed monoidal derivator $\D$ of
type $\mathbf{Dia}$.
\subsection*{Naturality}
Given $u:I'\to I$ and $v:J'\to J$ in $\mathbf{Dia}$ there is an
induced morphism of diagrams
\hyperlink{pi-diagram}{$(\Pi_{I',J'})$}$\to$\hyperlink{pi-diagram}{$(\Pi_{I,J})$}
and we distinguish the morphisms in the former from their counterparts
in the latter by decorating them with a prime. We deduce a morphism
\begin{align*}
  \xymatrix@R=4pt@C=10pt{\mathllap{\Psi^{u,v}_{A,B}:(u^{\op}\times v)^{*}\ehm{A}{B}}\ar@{=}[r]&\mathrlap{(u^{\op}\times  v)^{*}p_{*}[q^{*}A,r^{*}B]}\\
    \ar[r]&\mathrlap{p'_{*}(\tw{u}\times v)^{*}[q^{*}A,r^{*}B]}\\
    \ar[r]&\mathrlap{p'_{*}[(\tw{u}\times v)^{*}q^{*}A,(\tw{u}\times v)^{*}r^{*}B]}\\
    \ar@{=}[r]&\mathrlap{\ehm{u^{*}A}{v^{*}B}.}}
\end{align*}
Clearly, this morphism is natural in $A$ and $B$, moreover it behaves
well with respect to identities and composition of functors as well as
natural transformations in
$\mathbf{Dia}^{\op}\times\mathbf{Dia}^{\op,\op}$ so that we have
defined a lax natural transformation. The following proposition thus
concludes the proof of the naturality property.

\begin{pro}
  For $u$,$v$ and $A$, $B$ as above the morphism $\Psi^{u,v}_{A,B}$ is
  invertible.
\end{pro}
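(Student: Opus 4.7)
My plan is to decompose $\Psi^{u,v}_{A,B}$ into three successive morphisms, observe that the last is automatically invertible, and verify the remaining two are fiberwise isomorphisms using the opfibration property of $p$. Explicitly, $\Psi$ factors as
\[
(u^{\op}\times v)^*p_*[q^*A,r^*B]\xrightarrow{\alpha}p'_*(\tw{u}\times v)^*[q^*A,r^*B]\xrightarrow{p'_*\beta}p'_*[(\tw{u}\times v)^*q^*A,(\tw{u}\times v)^*r^*B]\xrightarrow{\sim}p'_*[q'^*u^*A,r'^*v^*B],
\]
where $\alpha$ is the Beck--Chevalley morphism for the commutative square relating $p$ and $p'$, $\beta$ is the canonical pullback-of-internal-hom comparison, and the last arrow is assembled from the pseudofunctor structure isomorphisms $(\tw{u}\times v)^*q^*\cong q'^*u^*$ and $(\tw{u}\times v)^*r^*\cong r'^*v^*$; it is automatically invertible.

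By axiom \ref{der:conservative} it suffices to show $\Psi^{u,v}_{A,B}$ becomes invertible after pulling back along every point $(i',j')\in(I'^{\op}\times J')_0$. The central input is that $p=(pq)_1\times\mathrm{id}_J$ is a Grothendieck opfibration: indeed, given $f\colon i\to j$ in $\tw{I}$ lying over $i\in I^{\op}$ and a morphism $\alpha\colon i'\to i$ in $I$, the pair $(\alpha,\mathrm{id}_j)\colon f\to f\alpha$ is the opcartesian lift for $(pq)_1$. Since any functor from $\pt$ is trivially an opfibration, Lemma~\ref{lem:fibration-exact} applied to the strict pullback square
\[
\xymatrix{k\backslash I \ar[r]^{\mathrm{inc}} \ar[d] & \tw{I}\times J \ar[d]^p \\ \pt \ar[r]_{(k,l)} & I^{\op}\times J}
\]
(the fiber of $p$ over $(k,l)$ being the coslice $k\backslash I$) yields $(k,l)^*p_*\cong p_{k\backslash I*}\mathrm{inc}^*$. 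Because $k\backslash I$ has initial object $\mathrm{id}_k\colon k\to k$, the inclusion $\mathrm{id}_k\colon\pt\to k\backslash I$ is left adjoint to $p_{k\backslash I}$, so $p_{k\backslash I*}$ is evaluation at this initial object. Combining with Corollary~\ref{cor:fibration} applied to the point $(\mathrm{id}_k,l)\colon\pt\to\tw{I}\times J$, which is trivially a fibration, yields the fiber identification
\[(k,l)^*\ehm{A}{B}\cong[A_k,B_l].\]
The same argument on the $I'$-side gives $(i',j')^*\ehm{u^*A}{v^*B}\cong[(u^*A)_{i'},(v^*B)_{j'}]$, which the pseudofunctor identifications rewrite as $[A_{u(i')},B_{v(j')}]$, matching the fiber of the LHS at $(u(i'),v(j'))$. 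A diagram chase through the constructions of $\alpha$ and $\beta$ and the various structure isomorphisms shows that $(i',j')^*\Psi^{u,v}_{A,B}$ becomes the identity modulo these identifications, whence $\Psi^{u,v}_{A,B}$ is invertible by \ref{der:conservative}.

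The main obstacle is that the commutative square relating $p'$ and $p$ is \emph{not} a strict pullback in $\mathbf{Cat}$ in general, so Lemma~\ref{lem:fibration-exact} does not apply to $\alpha$ directly; the pointwise fiber analysis above is what bypasses this. The remaining work is a routine but lengthy verification that the Beck--Chevalley, the internal-hom comparison, and the pseudofunctor structure isomorphisms all cohere under the reduction to fibers.
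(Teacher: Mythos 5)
Your reduction of $(k,l)^{*}p_{*}$ to evaluation at the initial object $(\catid_{k},l)$ of the fiber $k\backslash I$ is correct: $p$ is indeed an opfibration, so Lemma~\ref{lem:fibration-exact} applies to the strict fiber square and $p_{k\backslash I*}\cong\catid_{k}^{*}$. The gap is the very next step. The functor $(\catid_{k},l):\pt\to\tw{I}\times J$ is \emph{not} a fibration --- a functor out of $\pt$ is a fibration only when its image receives no non-identity arrows, and the object $(\catid_{k},l)$ receives many --- so Corollary~\ref{cor:fibration}\ref{cor:fibration.closed} does not apply, and the identification $(\catid_{k},l)^{*}[q^{*}A,r^{*}B]\cong[A_{k},B_{l}]$ is unjustified. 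This is not a fixable slip of wording: restriction to a point does not commute with internal hom in a closed monoidal derivator (recall that functors between closed monoidal categories need not be closed), and the identity $(k,l)^{*}\ehm{A}{B}\cong[A_{k},B_{l}]$ is essentially the entire content of the proposition in the case $I'=J'=\pt$. The same gap reappears when you try to check $p'_{*}\beta$ fiberwise, since $(i',j')^{*}p'_{*}\beta$ is again the restriction to a point of an internal-hom comparison map for a functor that is not a fibration; in fact $\beta$ itself is not invertible for general $u,v$, so a piece-by-piece fiberwise verification of your three-fold factorization cannot go through as stated.

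The paper's proof is organized precisely to circumvent this. It first factors the point $(i,j)$ through $(i\backslash I)\times(J/j)$, so that the internal-hom comparison is only ever taken along $\tw{s}\times t$, which \emph{is} a fibration; then, having reduced to the case where $I$ has an initial object and $J$ a terminal one, it never restricts the internal hom to a point but instead writes $(i,j)^{*}p_{*}\cong p_{J*}r_{*}$ and trades $r_{*}[q^{*}A,r^{*}B]$ for $[r_{\#}q^{*}A,B]$ via Corollary~\ref{cor:fibration}\ref{cor:fibration.adjunction} applied to the fibration $r$. That exchange is what replaces your missing step, and it costs two further nontrivial inputs: the base change $r_{\#}q^{*}\to p_{J}^{*}p_{I\#}$ and the full faithfulness of $q^{*}$ (proved via a fully faithful right adjoint to $q$ when $I$ has an initial object). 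Some argument of this kind is needed to close the gap; your opfibration observation alone does not suffice.
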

\begin{proof}
  We proceed in several steps.
  \begin{enumerate}
  \item Let $i\in I'_{0}$, $j\in J'_{0}$. It suffices to prove that
    $(i,j)^{*}$ applied to the morphism $\Psi^{u,v}_{A,B}$ is
    invertible. But this means that it suffices to prove that
    $\Psi^{ui,vj}_{\plho,\plho}$ and $\Psi^{i,j}_{\plho,\plho}$ are
    invertible; in other words we may assume $I'=J'=\pt$, $u=i\in
    I_{0}$, $v=j\in J_{0}$.
  \item We factor
    $(i,j):\pt\xrightarrow{i}I\xrightarrow{\catid_{I}\times j} I\times
    J$, and first deal with $\Psi^{\catid_{I},j}_{\plho,\plho}$. In
    this case, the square
    \begin{equation*}
      \xymatrix{\tw{I}\ar[r]^-{\catid_{\tw{I}}\times j}\ar[d]_{p'}&\tw{I}\times
        J\ar[d]^{p}\\
        I^{\op}\ar[r]_-{\catid_{I^{\op}}\times j}&I^{\op}\times J}
    \end{equation*}
    is a pullback square, and $p$ an opfibration, therefore the first
    arrow in the definition of $\Psi$ is invertible
    (Lemma~\ref{lem:fibration-exact}). For the second arrow in the
    definition, it suffices to prove invertible
    \begin{align*}
      (\catid_{\tw{I}}\times j)^{*}[(\catid_{\tw{I}}\times
      p_{J})^{*}\plho,\plho]\to [\plho,(\catid_{\tw{I}}\times
      j)^{*}\plho].
    \end{align*}
    By passing to $\D_{\tw{I}}$ we may thus assume $I=\pt$ and prove
    instead invertible
    \begin{equation*}
      j^{*}[p_{J}^{*}\plho,\plho]\to [\plho,j^{*}\plho].
    \end{equation*}
    By adjunction, this corresponds to the morphism
    \begin{equation*}
      (\catid_{\pt}\times j)_{!}(\plho\boxtimes \plho)\xrightarrow{(\ref{eq:mon-der-external})} \plho\boxtimes j_{!}\plho
    \end{equation*}
    which we know to be invertible.
  \item Thus from now on we may assume $J=\pt$. Factor
    $i:\pt\xrightarrow{\catid_{i}}i\backslash
    I\xrightarrow{t}I$. Exactly the same argument as in the previous
    step shows that the first arrow in the definition of
    $\Psi^{t,\catid_{\pt}}$ is invertible. Moreover, $\tw{t}$ is a
    fibration hence, by Lemma~\ref{lem:mon-der}, also the second arrow
    in the definition of $\Psi^{t,\catid_{\pt}}$ is invertible.
  \item From now on, we may assume
    that $I$ has initial object $i$ and we need to prove
    $\Psi^{i,\catid_{\pt}}$ invertible. Consider the following diagram:
    \begin{align*}
      \scalebox{.95}{\xymatrix{i^{*}p_{*}[q^{*}\plho,p_{\tw{I}}^{*}\plho]\ar[r]\ar[rd]_{\sim}&\catid_{i}^{*}[q^{*}\plho,p_{\tw{I}}^{*}\plho]\ar[r]&[\catid_{i}^{*}q^{*}\plho,\catid_{i}^{*}p_{\tw{I}}^{*}\plho]\ar@{=}[r]&[i^{*}\plho,\plho]\\
        &p_{\tw{I}*}[q^{*}\plho,p_{\tw{I}}^{*}\plho]\ar[u]&\ar[u]\ar[l]^-{\sim}[p_{\tw{I}!}q^{*}\plho,\plho]&\ar[u]_-{\sim}\ar[l]^-{\sim}[p_{I!}\plho,\plho]\mathrlap{.}}}
    \end{align*}
    The composition of the top horizontal arrows is nothing but
    $\Psi^{i,\catid_{\pt}}$. The triangle on the left arises from the
    Beck-Chevalley transformations associated to the squares
    \begin{equation*}
      \xymatrix{ \pt\ar[r]^-{\catid_{i}}\ar@{=}[d]&\xtwocell[1,1]{}\omit\tw{I}\ar@{=}[r]\ar[d]_{p_{\tw{I}}}&\tw{I}\ar[d]^{p}\\
        \pt\ar@{=}[r]&\pt\ar[r]_{i}&I^{\op}\mathrlap{.}}
    \end{equation*}
    It follows that the triangle commutes and the slanted morphism is
    invertible by \ref{der:kanextptwise}. The first bottom horizontal
    arrow is invertible by Lemma~\ref{lem:mon-der}, the second one
    arises from the counit $q_{!}q^{*}\to \catid$ which is invertible
    by Lemma~\ref{lem:isos}. The middle vertical arrow is induced by
    the ``dual'' of the left vertical arrow,
    $\catid_{i}^{*}\xrightarrow{\mathrm{adj}}\catid_{i}^{*}p_{\tw{I}}^{*}p_{\tw{I}!}=
    p_{\tw{I}!}$. The commutativity of the left square is therefore
    immediate, as is the commutativity of the right square (the right
    vertical arrow is induced by the canonical identification
    $i^{*}\cong p_{I!}$ as $i$ is an initial object of $I$).
  \end{enumerate}
\end{proof}

\subsection*{Internal hom}

We now want to show that in case $I=J\in\mathbf{Dia}_{0}$, internal
hom can be expressed in terms of external hom. Consider the following
category $3I$: Objects are two composable arrows in $I$ and morphisms
from the top to the bottom are of the form:
  \begin{equation*}
    \xymatrix{i_{2}\ar[d]\ar[r]&i_{1}\ar[r]&i_{0}\ar[d]\\
      j_{2}\ar[r]&j_{1}\ar[r]\ar[u]&j_{0}\mathrlap{.}}
  \end{equation*}
  We have canonical functors $t_{k}:3I\to I$, $k=0,2$. Moreover, there
  are functors $p':3I\to \tw{I}^{\op}$ and $q':3I\to \tw{I}\times I$,
  the first one forgetting the 0-component, the second one mapping the
  two components 0 and 1 to $\tw{I}$ and component 2 to $I$. It is
  easy to see that one gets a pullback square:
  \begin{equation*}
    \xymatrix{3I\ar[r]^-{p'}\ar[d]_{q'}&\tw{I}^{\op}\ar[d]^{q_{2}}\\
      \tw{I}\times I\ar[r]_-{p}&I^{\op}\times I\mathrlap{.}}
  \end{equation*}
  Notice that there is a canonical natural transformation $t_{2}\to
  t_{0}$ and hence one can define the following morphism:
    \begin{equation}\label{eq:phi}
      \xymatrix@R=4pt@C=15pt{\mathllap{\Theta^{I}_{A,B}:[A,B]}\ar[r]^{\mathrm{adj}}&\mathrlap{[t_{2!}t_{2}^{*}A,B]}\\
      \ar[r]&\mathrlap{[t_{2!}t_{0}^{*}A,B]}\\
      \ar[r]&\mathrlap{t_{2*}[t_{0}^{*}A,t_{2}^{*}B]}\\
      &\ar[l]_-{\sim}\mathrlap{p_{2*}q_{2*}p'_{*}[q'^{*}q^{*}A,q'^{*}r^{*}B]}\\
      &\ar[l]_-{\sim}\mathrlap{p_{2*}q_{2*}p'_{*}q'^{*}[q^{*}A,r^{*}B]}\\
      &\ar[l]_-{\sim}\mathrlap{p_{2*}q_{2*}q_{2}^{*}p_{*}[q^{*}A,r^{*}B]\mathrlap{.}}}
    \end{equation}
    Here the last isomorphism is due to
    Lemma~\ref{lem:fibration-exact} and $q_{2}$ being a
    fibration. Therefore also $q'$ is a fibration and
    Lemma~\ref{lem:mon-der} gives us the second to last isomorphism.

    Again, $\Theta^{I}_{A,B}$ is clearly natural in $A$ and $B$ and
    one checks easily (if tediously) that the following diagram
    commutes for any $u:I'\to I$ in $\mathbf{Dia}_{1}$:
    \begin{equation*}
      \xymatrix{u^{*}[A,B]\ar[r]^-{\Theta^{I}}\ar[d]&u^{*}p_{2*}q_{2*}q_{2}^{*}\ehm{A}{B}\ar@{.>}[r]&p'_{2*}q'_{2*}\tw{u}^{\op*}q_{2}^{*}\ehm{A}{B}\ar@{=}[d]\\
        [u^{*}A,u^{*}B]\ar[r]_-{\Theta^{I'}}&p'_{2*}q'_{2*}q_{2}'^{*}\ehm{u^{*}A}{u^{*}B}&p'_{2*}q'_{2*}q_{2}'^{*}(u^{\op}\times
        u)^{*}\ehm{A}{B}\ar@{.>}[l]^{\Psi}\mathrlap{.}}
    \end{equation*}    
    It follows that if we take the composition of the dotted arrows in
    the diagram as components of the 2-cells for the lax natural
    transformation $p_{2*}q_{2*}q_{2}^{*}\ehm{\plho}{\plho}$, then
    $\Theta$ defines a modification as claimed in
    section~\ref{sec:ehom}. It now remains to prove that it is
    invertible.
\begin{pro}
  $\Theta^{I}_{A,B}$ is invertible for all $I$, $A$ and $B$ as above.
\end{pro}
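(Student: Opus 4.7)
The plan is to verify invertibility fiberwise using axiom~\ref{der:conservative}: it suffices to show $j^*\Theta^I_{A,B}$ is an isomorphism for every $j \in I_0$. Applying axiom~\ref{der:kanextptwise} to $(pq)_2 : \tw{I}^{\op} \to I$ at $j$, together with the already-established naturality $\Psi$ identifying fibers of $\ehm{A}{B}$ with internal homs of fibers of $A$ and $B$, the target $j^*(pq)_{2*}q_2^*\ehm{A}{B}$ rewrites as an explicit limit of the $[A_i,B_j]$ taken over the appropriate slice of $\tw{I}^{\op}$.

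Mimicking the naturality proof, I would then factor $j:\pt\to I$ through the slice $s:I/j\to I$, use the compatibility of $\Theta$ with $\Psi$ recorded in the commutative diagram just before the proposition, and invoke Corollary~\ref{cor:fibration} applied to the fibration $\tw{s}$, to reduce to the case where $I$ has terminal object $j$. In that reduced setting the identity arrow $\catid_j$ becomes an initial object of the comma category indexing the right Kan extension, so the limit collapses to $\ehm{A}{B}_{(j,j)}$, which by the normalization $\Lambda$ equals $[A_j, B_j]$. The source $j^*[A,B]$ also reduces to $[A_j, B_j]$ in this situation, and one verifies that the composition of the five steps defining $\Theta^I_{A,B}$ produces precisely the canonical identification between them.

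The main obstacle is the diagram chase: checking that the composite of the five constituent morphisms of $\Theta^I_{A,B}$ -- the unit $[A,B] \to [t_{2\#}t_2^*A, B]$, the morphism induced by the natural transformation $t_2 \to t_0$, the hom-adjunction $[t_{2\#}-,-]\cong t_{2*}[-,t_2^*-]$, and the two isomorphisms coming from the pullback square with $3I$ and from Corollary~\ref{cor:fibration} applied to the fibrations $q_2$ and $q'$ -- really collapses fiberwise to the expected canonical map. This is tedious but mechanical; as in the last step of the naturality proof, the final collapse rests on the existence of a fully faithful right adjoint to a projection functor, which renders certain counits invertible. The conceptual content is simply that $(pq)_{2*}q_2^*\ehm{\cdot}{\cdot}$ is an end-type formula recovering the internal hom, but the bookkeeping needed to confirm that the explicit $\Theta^I_{A,B}$ realizes this recovery is the reason the full proof is deferred to the appendix.
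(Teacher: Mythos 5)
There is a genuine gap, and it is concentrated in the claimed ``collapse'' at the end of your reduction. First, when $j$ is terminal in $I$, the comma category indexing the pointwise right Kan extension $(pq)_{2*}$ at $j$ is the whole of $\tw{I}^{\op}$ (every object $a\to b$ admits a unique map $(pq)_{2}(a\to b)=a\to j$), and $\catid_{j}$ is \emph{not} initial there: a morphism $(a\to b)\to \catid_{j}$ in $\tw{I}$ requires a component $j\to a$, which need not exist (take $I=\underline{2}$, $j=1$, $a=0$). So the limit does not collapse to $\ehm{A}{B}_{(j,j)}=[A_{j},B_{j}]$; it remains a genuine end over $\tw{I}$ of the internal homs of the fibers. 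Correspondingly, the claim that $j^{*}[A,B]\cong[A_{j},B_{j}]$ for terminal $j$ is false: already for set-valued presheaves on $\underline{2}$, the fiber of $[A,B]$ at the terminal object is the set of natural transformations $A\to B$, an end over all of $I$, not $\hm(A_{1},B_{1})$. (If the internal hom did commute with fibers, the entire external-hom machinery would be unnecessary.) Finally, the reduction to the terminal-object case is itself not justified: the compatibility square relating $\Theta^{I}$ and $\Theta^{I/j}$ has as one leg the canonical map $s^{*}[A,B]\to[s^{*}A,s^{*}B]$ for the slice projection $s:I/j\to I$, which is not a fibration, so Corollary~\ref{cor:fibration} does not apply and that leg need not be invertible; invertibility of $\Theta^{I/j}$ at $\catid_{j}$ therefore cannot be transported to $j^{*}\Theta^{I}$.

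The paper's proof never computes either side fiberwise as a single internal hom. Of the constituents of $\Theta^{I}_{A,B}$, all but one are invertible for formal reasons: the hom-adjunction because $t_{2}$ is a fibration, and the last arrows by Lemma~\ref{lem:fibration-exact} and Corollary~\ref{cor:fibration} applied to the fibrations $q_{2}$ and $q'$. Everything thus reduces to showing that $t_{2\#}t_{0}^{*}\to t_{2\#}t_{2}^{*}\to\Id$ is invertible, and \emph{this} is what is checked fiberwise: base change along the fibration $t_{2}$ identifies the fiber at $i$ with the counit $u_{\#}u^{*}\to\Id$ for $u:3I_{i}\to i\backslash I$, which is invertible because $u$ admits a fully faithful right adjoint $(i\to j)\mapsto(i\xrightarrow{\catid_{i}}i\to j)$. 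Your closing remark about a fully faithful right adjoint rendering a counit invertible is indeed the key ingredient, but it must be applied to this auxiliary comparison over $3I$, not to a collapse of the end onto a single fiber.
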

\begin{proof}
  It is easy to see that $t_{2}$ is a fibration. Hence it follows from
  Lemma~\ref{lem:mon-der} that the third arrow in~(\ref{eq:phi}) is
  invertible, and it now suffices to prove that
  \begin{equation}
    \label{eq:adj-nattrans}
    t_{2!}t_{0}^{*}\to t_{2!}t_{2}^{*}\xrightarrow{\mathrm{adj}} 1
  \end{equation}
  is invertible. Let $i\in I_{0}$ be an arbitrary object. We will show
  that $i^{*}$ applied to~(\ref{eq:adj-nattrans}) is invertible which
  is enough for the claim by~\ref{der:conservative}.

  Consider the following two diagrams:
  \begin{align*}
    \xymatrix{3I_{i}\ar[r]^{w}\ar[d]_{p_{3I_{i}}}&3I\ar[d]^{t_{2}}\\
      \pt\ar[r]_{i}&I\mathrlap{,}}&&
    \xymatrixcolsep{5pc}\xymatrix{3I_{i}\ar[r]^{w}\ar[d]_{u}&3I\dtwocell_{t_{2}}^{t_{0}}{^}\\
      i\backslash I\rtwocell_{v}^{ip_{i\backslash I}}&I\mathrlap{.}}
  \end{align*}
  The first one is a pullback square, in the second one $u$ is defined
  by $u(i\to i_{1}\to i_{0})=i\to i_{0}$, while $v(i\to i_{0})=i_{0}$
  and $ip_{i\backslash I}\to v$ is the canonical natural
  transformation. This second diagram is commutative in the sense that
  $ip_{i\backslash I}u\to vu$ is equal to $t_{2}w\to
  t_{0}w$. Consequently the second inner square on the left of the
  following diagram commutes:
  \begin{equation*}
    \xymatrix{i^{*}t_{2!}t_{0}^{*}\ar[r]&i^{*}t_{2!}t_{2}^{*}\ar[r]^-{\mathrm{adj}}&i^{*}\\
      p_{3I_{i}!}w^{*}t_{0}*\ar[r]\ar[u]^{\sim}\ar[d]_{\sim}&p_{3I_{i}!}w^{*}t_{2}^{*}\ar[u]^{\sim}\ar[d]_{\sim}\ar@{=}[r]&**[l]p_{3I_{i}!}p_{3I_{i}}^{*}i^{*}\ar[u]^{\mathrm{adj}}\ar[dl]^{\sim}\\
      p_{i\backslash
        I!}u_{!}u^{*}v^{*}\ar[r]\ar[d]_{\mathrm{adj}}&p_{i\backslash
        I!}u_{!}u^{*}p_{i\backslash I}^{*}i^{*}\ar[d]_{\mathrm{adj}}\\
      p_{i\backslash I!}v^{*}\ar[r]_-{\sim}&p_{i\backslash
        I!}p_{i\backslash I}^{*}i^{*}\ar@/_5pc/[ruuu]_{\mathrm{adj}}^{\sim}\mathrlap{.}}
  \end{equation*}
  The rest is clearly commutative. Moreover, the top row is the fiber
  of~(\ref{eq:adj-nattrans}) over $i$. The isomorphism of functors
  $p_{i\backslash I!}\cong\catid_{i}^{*}$ ($\catid_{i}$ being the
  initial object of $i\backslash I$) implies that the bottom
  horizontal as well as the bent arrow induced by the counit of the
  adjunction $p_{i\backslash I!}\dashv p_{i\backslash I}^{*}$ are
  invertible, hence it suffices to prove $u_{!}u^{*}\to \Id$ an
  isomorphism. But this is true since $u$ admits a fully faithful
  right adjoint
  \begin{align*}
    i\backslash I&\longrightarrow 3I_{i}\\
    (i\to j)&\longmapsto (i\xrightarrow{\catid_{i}}i\to j).
  \end{align*}
\end{proof}

\subsection*{External product}

Recall that for any closed monoidal category there is a canonical
morphism
\begin{align}\label{eq:prod_ihom}
  [A_{1},A_{2}]\otimes [A_{3},A_{4}]\to [A_{1}\otimes
  A_{3},A_{2}\otimes A_{4}]
\end{align}
defined by adjunction as follows:
\begin{align*}
  \xymatrix@R=4pt@C=20pt{\mathllap{([A_{1},A_{2}]\otimes [A_{3},A_{4}]) \otimes (A_{1}\otimes
  A_{3})}\ar[r]^{\sim}&\mathrlap{([A_{1},A_{2}]\otimes A_{1})\otimes
  ([A_{3},A_{4}] \otimes A_{3})}\\
  \ar[r]^{\mathrm{ev}\otimes\mathrm{ev}}&\mathrlap{A_{2}\otimes A_{4}\mathrlap{.}}}
\end{align*}
From this we deduce for $A_{1},A_{3}\in \D(I)_{0}$, $A_{2},A_{4}\in \D(J)_{0}$
($I,J\in\mathbf{Dia}_{0}$):
\begin{align}\label{eq:prod_ehom}
  &\xymatrix@R=4pt@C=20pt{\mathllap{\ehm{A_{1}}{A_{2}}\otimes
  \ehm{A_{3}}{A_{4}}}\ar@{=}[r]&\mathrlap{p_{*}[q^{*}A_{1},r^{*}A_{2}]\otimes
  p_{*}[q^{*}A_{3},r^{*}A_{4}]}\\
  \ar[r]&\mathrlap{p_{*}([q^{*}A_{1},r^{*}A_{2}]\otimes
  [q^{*}A_{3},r^{*}A_{4}])}\\
\ar[r]^{(\ref{eq:prod_ihom})}&\mathrlap{p_{*}([q^{*}A_{1}\otimes
  q^{*}A_{3},r^{*}A_{2}\otimes r^{*}A_{4}])}\\
  \ar[r]^{\sim}&\mathrlap{\ehm{A_{1}\otimes A_{3}}{A_{2}\otimes A_{4}}\mathrlap{.}}}&
\end{align}
Now, fix categories $I_{(k)}$, $k=1,\ldots 4$ in $\mathbf{Dia}$ and
objects $A_{k}\in \D(I_{(k)})_{0}$. Set $K=I_{(1)}^{\op}\times I_{(2)}\times
I_{(3)}^{\op}\times I_{(4)}$. We can now finally
define the morphism $\Xi$:
\begin{multline}\label{eq:prod_ehom2}
\Xi^{I_{(1)},I_{(2)},I_{(3)},I_{(4)}}_{A_{1},A_{2},A_{3},A_{4}}:\quad\ehm{A_{1}}{A_{2}}\boxtimes
  \ehm{A_{3}}{A_{4}}=\restr{\ehm{A_{1}}{A_{2}}}{K}\otimes
  \restr{\ehm{A_{3}}{A_{4}}}{K}\\
  \xrightarrow[\sim]{\Psi}\tau^{*}\ehm{\restr{A_{1}}{I_{(1)}\times
      I_{(3)}}}{\restr{A_{2}}{I_{(2)}\times I_{(4)}}}\otimes\tau^{*}\ehm{\restr{A_{3}}{I_{(1)}\times
      I_{(3)}}}{\restr{A_{4}}{I_{(2)}\times I_{(4)}}}\\
  \xleftarrow[\sim]{}\tau^{*}
  \left(
\ehm{\restr{A_{1}}{I_{(1)}\times
      I_{(3)}}}{\restr{A_{2}}{I_{(2)}\times I_{(4)}}}\otimes\ehm{\restr{A_{3}}{I_{(1)}\times
      I_{(3)}}}{\restr{A_{4}}{I_{(2)}\times I_{(4)}}}
  \right)\\
  \xrightarrow{(\ref{eq:prod_ehom})}\tau^{*}\ehm{A_{1}\boxtimes
    A_{3}}{A_{2}\boxtimes A_{4}}.
\end{multline}
Clearly, $\Xi^{I_{(1)},I_{(2)},I_{(3)},I_{(4)}}$ is a natural
transformation. To conclude the proof of the external product property
it remains to verify the following lemma.
\begin{lem} Let $u_{k}:I'_{(k)}\to I_{(k)}$, $k=1,\ldots,4$. Then the
  following diagram commutes:
  \begin{equation*}
    \scalebox{0.9}{\xymatrix{(u_{1}^{\op}\times u_{3}^{\op}\times u_{2}\times u_{4})^{*}(\ehm{A_{1}}{A_{2}}\boxtimes
      \ehm{A_{3}}{A_{4}})\ar[r]^-{\Xi}\ar[d]_{\Psi}^{\sim}&(u_{1}^{\op}\times
      u_{3}^{\op}\times u_{2}\times
      u_{4})^{*}\tau^{*}\ehm{A_{1}\boxtimes A_{3}}{A_{2}\boxtimes A_{4}}\ar[d]^{\Psi}_{\sim}\\
      \ehm{u_{1}^{*}A_{1}}{u_{2}^{*}A_{2}}\boxtimes
      \ehm{u_{3}^{*}A_{3}}{u_{4}^{*}A_{4}}\ar[r]_-{\Xi}&\tau^{*}\ehm{u_{1}^{*}A_{1}\boxtimes u_{3}^{*}A_{3}}{u_{2}^{*}A_{2}\boxtimes u_{4}^{*}A_{4}}\mathrlap{.}}}
  \end{equation*}
\end{lem}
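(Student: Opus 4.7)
The strategy is to split the outer square according to the decomposition of $\Xi$ in~\eqref{eq:prod_ehom2} as the composite of the coordinate-wise $\Psi$ with the external hom version~\eqref{eq:prod_ehom} of the canonical morphism~\eqref{eq:prod_ihom}. This yields two smaller squares: an upper square concerning only $\Psi$, and a lower square concerning only~\eqref{eq:prod_ehom}.

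The upper square asserts that applying $\Psi$ in both factors of an external product is compatible with a further pullback along $u_1^{\op}\times u_3^{\op}\times u_2\times u_4$. This is a formal consequence of two pieces of naturality already established: on the one hand, $\Psi$ is pseudonatural, so the squares for the pairs $(A_1,A_2)$ and $(A_3,A_4)$ commute individually; on the other hand, the external product $\boxtimes$ was shown right after~\eqref{eq:eprod-nat} to be a pseudonatural transformation of 2-functors, which is precisely the statement that $\boxtimes$ commutes with pullbacks of the shape at hand. Pasting these two compatibilities produces the upper square.

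The lower square asks that the pullback $(u_1^{\op}\times u_3^{\op}\times u_2\times u_4)^*$ be natural with respect to~\eqref{eq:prod_ehom}. Unfolding the definition of~\eqref{eq:prod_ehom}, this reduces to three ingredients: (i)~the monoidality isomorphism for pullbacks $v^*(X\otimes Y)\xrightarrow{\sim} v^*X\otimes v^*Y$, which is built into the notion of a monoidal prederivator; (ii)~the pseudonaturality of the relevant $p_*$ along the morphism of diagrams $\hyperlink{pi-diagram}{(\Pi_{I'_{(k)},J'_{(k)}})}\to\hyperlink{pi-diagram}{(\Pi_{I_{(k)},J_{(k)}})}$, combined with the Beck--Chevalley isomorphisms of Corollary~\ref{cor:fibration}~\ref{cor:fibration.closed} applied to the fibrations $\tw{u}\times v$ (these are exactly the ingredients already assembled inside $\Psi$); and (iii)~the fact that the canonical morphism~\eqref{eq:prod_ihom} in a closed monoidal category is natural with respect to strong monoidal functors, in particular with respect to each $u_k^*$. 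Each of these is standard, and together they pin down the lower square cell by cell.

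The main obstacle is purely bookkeeping: four categories, four functors, and two layers of pseudofunctoriality (for $\D$ and for $\ehm{\cdot}{\cdot}$) must be juggled simultaneously. The cleanest way to carry this out is to expand~\eqref{eq:prod_ehom} into its four defining steps and insert the corresponding $\Psi$-isomorphisms between the top and bottom rows, obtaining a tower of elementary squares each of which reduces to one of the three naturality statements above. No genuinely new ingredient beyond those already used in the proofs of the naturality and internal hom properties of $\ehm{\cdot}{\cdot}$ is required.
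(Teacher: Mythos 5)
Your proposal is correct and follows essentially the same route as the paper: decompose $\Xi$ via~\eqref{eq:prod_ehom2} into the coordinate-wise $\Psi$ followed by~\eqref{eq:prod_ehom}, dispose of the $\Psi$-square by the already-established pseudonaturality, and reduce the remaining square to the compatibility of the $(p^{*},p_{*})$ (co)units with pullback along $u^{\op}\times v$ and $\tw{u}\times v$ together with the evident functoriality of~\eqref{eq:prod_ihom}. Your write-up merely makes explicit a few steps the paper leaves as "immediate."
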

\begin{proof}
  By decomposing the horizontal arrows according to their definition
  in~(\ref{eq:prod_ehom2}) one immediately reduces to showing
  that~(\ref{eq:prod_ehom}) behaves well with respect to the functors
  $u_{k}$; in other words one reduces to showing that for
  $A_{1},A_{3}\in \D(I)_{0}$, $A_{2},A_{4}\in \D(J)_{0}$ and functors $u:I'\to I$,
  $v:J'\to J$, the following diagram commutes:
  \begin{equation*}
    \xymatrix{(u^{\op}\times
      v)^{*}(\ehm{A_{1}}{A_{2}}\otimes\ehm{A_{3}}{A_{4}})\ar[r]^{(\ref{eq:prod_ehom})}\ar[d]_{\Psi}^{\sim}&(u^{\op}\times
      v)^{*}\ehm{A_{1}\otimes A_{3}}{A_{2}\otimes
        A_{4}}\ar[d]^{\Psi}_{\sim}\\
      \ehm{u^{*}A_{1}}{v^{*}A_{2}}\otimes\ehm{u^{*}A_{3}}{v^{*}A_{4}}\ar[r]_{(\ref{eq:prod_ehom})}&\ehm{u^{*}A_{1}\otimes
        u^{*}A_{3}}{v^{*}A_{2}\otimes v^{*}A_{4}}\mathrlap{.}}
  \end{equation*}
  Since the unit and counit of the adjunction $p^{*}\dashv p_{*}$
  behave well with respect to pulling back along $u^{\op}\times v$ and
  $\tw{u}\times v$ one reduces further to showing
  that~\eqref{eq:prod_ihom} is functorial in this sense which is
  clear.
\end{proof}

\subsection*{Adjunction}

Fix three categories $I$, $J$, $K$ in $\mathbf{Dia}$, and objects
$A\in \D(I)_{0}, B\in\D(J)_{0}, C\in\D(K)_{0}$. Fix also the following
notation:
\begin{equation*}
  \xymatrix{J&I\times J\ar[l]\ar[r]&I\\
    \tw{J}\times K\ar@/_1.5pc/[dd]_{p}\ar[u]^{q}\ar[d]_{r}&\tw{I\times J}\times
    K\ar[d]_{p''}\ar[dl]^{r''}\ar[u]^{q''}\ar[r]^-{\alpha}\ar[l]_-{\beta}&\tw{I}\times
    J^{\op}\times K\ar[u]_{q'}\ar[dl]^{p'}\ar@/^2pc/[ddll]^{r'}\\
    K&I^{\op}\times J^{\op}\times K\\
  J^{\op}\times K\mathrlap{.}}
\end{equation*}
Then the morphism in the statement of the adjunction property is given
by:
\begin{align*}
  \Omega^{I,J,K}_{A,B,C}:p'_{*}[q'^{*}A,r'^{*}p_{*}[q^{*}B,r^{*}C]]&\xrightarrow{\sim}p'_{*}[q'^{*}A,\alpha_{*}\beta^{*}[q^{*}B,r^{*}C]]\\
  &\xrightarrow{\sim} p'_{*}\alpha_{*}[\alpha^{*}q'^{*}A,[\beta^{*}q^{*}B,\beta^{*}r^{*}C]]\\
  &\xrightarrow{\sim}p'_{*}\alpha_{*}[\alpha^{*}q'^{*}A\otimes
  \beta^{*}q^{*}B,\beta^{*}r^{*}C] \\
  &\xrightarrow{\sim} p''_{*}[q''^{*}(\restr{A}{I\times J}\otimes
  \restr{B}{I\times J}),r''^{*}C].
\end{align*}
It is clear that this morphism is natural in the three
arguments. Moreover, as above it is straightforward to check that it
behaves well with respect to functors $u:I'\to I$, $v:J'\to J$,
$w:K'\to K$.

\subsection*{Biduality}

Fix $B\in \D(\pt)_{0}$, $I\in \mathbf{Dia}_{0}$ and $A\in
\D(I)_{0}$. We also fix the following notation:
\begin{equation*}
  \xymatrix{&\pt&\\
    \ar[rrd]_(.8){\overline{p}}\ar[d]_{\overline{q}}\tw{I^{\op}}\ar[ru]^{\overline{r}}\ar[rr]^{\mu}&&\tw{I}\ar[lu]_{r}\ar[lld]^(.8){p}\ar[d]^{q}\\
    I^{\op} &&I\mathrlap{.}}
\end{equation*}
Here, $\mu$ is the isomorphism of categories taking $j\to i$ in
$I^{\op}$ to $i\to j$ in $I$. We then define the morphism mentioned in
the statement of the biduality property,
\begin{equation}
  \label{eq:dual}
  \Upsilon^{I}_{A}:A\to \ehm{\ehm{A}{B}}{B},
\end{equation}
by adjunction as follows:
\begin{align*}
  \xymatrix@R=4pt@C=15pt{\mathllap{\overline{p}^{*}A\otimes
  \overline{q}^{*}p_{*}[q^{*}A,r^{*}B]}\ar@{=}[r]&
  \mathrlap{\overline{p}^{*}A\otimes
  \mu^{*}p^{*}p_{*}[q^{*}A,r^{*}B]}\\
  \ar[r]^{\mathrm{adj}}&
  \mathrlap{\overline{p}^{*}A\otimes
  \mu^{*}[q^{*}A,r^{*}B]}\\
\ar[r]&
  \mathrlap{\overline{p}^{*}A\otimes
  [\overline{p}^{*}A,\overline{r}^{*}B]}\\
  \ar[r]^{\mathrm{ev}}&\mathrlap{\overline{r}^{*}B\mathrlap{.}}}
\end{align*}
This is clearly natural in $A$. If $u:I'\to I$ is a functor in
$\mathbf{Dia}$ we define a morphism
\begin{equation*}
  u^{*}\ehm{\ehm{A}{B}}{B}\xrightarrow[\sim]{\Psi} \ehm{u^{\op*}\ehm{A}{B}}{B}\xleftarrow[\sim]{\Psi} \ehm{\ehm{u^{*}A}{B}}{B}.
\end{equation*}
As we know by the naturality property, this morphism is invertible,
natural in $A$, and behaves well with respect to identity and
composition of functors as well as natural transformations in
$\mathbf{Dia}$. Therefore we have defined a pseudonatural
transformation $\ehm{\ehm{\plho}{B}}{B}$. To check that
(\ref{eq:dual}) defines a modification of pseudonatural
transformations as claimed in section~\ref{sec:ehom} it suffices to
prove the following lemma.
\begin{lem} With the notation above the following diagram commutes:
  \begin{equation*}
    \xymatrix{u^{*}A\ar[r]^-{\Upsilon}\ar[d]_{\Upsilon}&u^{*}\ehm{\ehm{A}{B}}{B}\ar[d]^{\Psi}_{\sim}\\
      \ehm{\ehm{u^{*}A}{B}}{B}\ar[r]_-{\Psi}^-{\sim}&\ehm{u^{\op*}\ehm{A}{B}}{B}\mathrlap{.}}
  \end{equation*}
\end{lem}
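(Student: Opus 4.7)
The plan is to verify the claimed commutativity by diagram chasing after passing to adjuncts. By the definition~\eqref{eq:dual} of $\Upsilon$, a map $X \to \ehm{\ehm{A}{B}}{B}$ in $\D(I')$ is determined by its adjunct
\begin{equation*}
\overline{p}'^* X \otimes \overline{q}'^* \ehm{A}{B} \to \overline{r}'^* B
\end{equation*}
(with the primed notation referring to the diagram analogous to the one preceding~\eqref{eq:dual} but for $I'$ in place of $I$). Both paths around the square in the lemma give such a map starting from $X = u^*A$ and involving the isomorphism $\Psi$ on the middle tensor factor, and it suffices to show that these two adjuncts coincide as morphisms in $\D(\tw{(I')^{\op}})$.

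First I would unfold each adjunct. Each reduces to a composition built from three kinds of basic pieces: the counit $p^*p_* \to \catid$ pulled back along $\mu$ (respectively $\mu'$), the internal evaluation $[q^*A, r^*B] \otimes q^*A \to r^*B$ pulled back along $\mu$ (respectively $\mu'$), and the isomorphism $\Psi$ relating pulled-back external hom objects. The crucial observation is that $\Psi$ is itself defined in the naturality property via Beck--Chevalley along the comma square $\tw{u} \to I^{\op} \times J$, so when one unfolds $\Psi$ inside either adjunct one obtains the same Beck--Chevalley 2-cell applied to parallel pieces of the diagram, only assembled in different orders.

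Second, I would split the resulting diagram into a collection of subsquares commuting for standard reasons: naturality of the counit of $p^* \dashv p_*$ with respect to pullback along $\tw{u}^{\op}$, naturality of the internal evaluation $\mathrm{ev}$ in both variables, naturality of the tensor--hom adjunction unit, and the coherence of $\Psi$ with composition already exploited in the proof of the naturality property. Each subsquare reduces to a general fact about pseudonatural transformations in the prederivator.

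The main obstacle is purely notational: the morphisms live over six inter-related categories ($I, I', I^{\op}, I'^{\op}, \tw{I^{\op}}, \tw{I'^{\op}}$) connected by $\overline{p}, \overline{q}, \overline{r}, \mu, u, \tw{u}^{\op}$ and their primed variants, and one must apply the correct base-change isomorphism at each step. No conceptual input is required beyond the coherences of $\Psi$ already used in the naturality and internal-hom properties.
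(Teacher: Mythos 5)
Your proposal is correct and matches the paper's own argument: the paper likewise rewrites the square, via the tensor--hom adjunction defining $\Upsilon$, as the outer rectangle of a diagram of adjuncts mapping into $\overline{r}'^{*}B$, and then checks that the constituent pieces commute by naturality of the evaluation and the coherence of $\Psi$. The only cosmetic difference is that the paper displays the unfolded diagram explicitly rather than describing the subsquares in prose.
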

\begin{proof}
  Using adjunction, the square can be equivalently written as the
  outer rectangle of the following diagram:
 \begin{multline*}
 \scalebox{0.95}{\xymatrix{\overline{p}'^{*}u^{*}A\otimes\overline{q}'^{*}u^{\op*}p_{*}[q^{*}A,r^{*}B]\ar[r]\ar[d]&\tw{u^{\op}}^{*}(\overline{p}^{*}A\otimes
     \overline{q}^{*}p_{*}[q^{*}A,r^{*}B])\ar[d]\ar@{-}[r]&\cdots\\
     \overline{p}'^{*}u^{*}A\otimes
     [\overline{p}'^{*}u^{*}A,\overline{r}'^{*}B]\ar[d]_{\mathrm{ev}}&\tw{u^{\op}}^{*}(\overline{p}^{*}A\otimes[\overline{p}^{*}A,\overline{r}^{*}B])\ar[d]_{\mathrm{ev}}&\\
   \overline{r}'^{*}B&\ar@{=}[l]\tw{u^{\op}}^{*}\overline{r}^{*}B\ar@{=}[r]&\cdots}}\\\\
\scalebox{0.95}{\xymatrix@C=12pt{\cdots\ar[r]& \tw{u^{\op}}^{*}(\overline{p}^{*}\ehm{\ehm{A}{B}}{B}\otimes
     \overline{q}^{*}\ehm{A}{B})\ar[d]&\ar[l]_-{\sim}
     \overline{p}'^{*}u^{*}\ehm{\ehm{A}{B}}{B}\otimes
     \overline{q}'^{*}u^{\op*}\ehm{A}{B}\ar[d]\\
     &\tw{u^{\op}}^{*}([\overline{q}^{*}\ehm{A}{B},\overline{r}^{*}B]\otimes
     \overline{q}^{*}\ehm{A}{B})\ar[d]_{\mathrm{ev}}&[\overline{q}'^{*}u^{\op*}\ehm{A}{B},\overline{r}'^{*}B]\otimes
     \overline{q}'^{*}u^{\op*}\ehm{A}{B}\ar[d]^{\mathrm{ev}}\\
     \cdots \ar@{=}[r]&\tw{u^{\op}}^{*}\overline{r}^{*}B\ar@{=}[r]&\overline{r}'^{*}B\mathrlap{.}}}
 \end{multline*}
 All three parts are easily seen to commute.
\end{proof}

\subsection*{Normalization}
\label{sec:ehom-properties-proof-normalization}
Given $J\in\mathbf{Dia}_{0}$, $A\in\D(\pt)_{0}$ and $B\in\D(J)_{0}$,
the morphism $\Lambda^{J}_{A,B}$ is the canonical identification
induced by the strict functoriality of $\D$:
\begin{equation*}
  [p_{J}^{*}A,B]\xrightarrow{\sim}\catid_{J*}[p_{J}^{*}A,B]=\ehm{A}{B}.
\end{equation*}
Clearly, this is natural in $A$ and $B$, and behaves well with respect
to functors $v:J'\to J$. The last claim in section~\ref{sec:ehom}
about $\Lambda$ explicitly amounts to the following:
\begin{itemize}
\item for $A,B\in\D(\pt)_{0}$, $\Theta$ is the canonical composition
  \begin{equation*}
    [A,B]\xrightarrow[\sim]{}\catid_{*}[A,B]\xrightarrow[\sim]{}\catid_{*}\catid_{*}[A,B]\xrightarrow[\sim]{\Lambda}\catid_{*}\catid_{*}\ehm{A}{B}
  \end{equation*}
  where $\catid$ is the unique endofunctor of the terminal category
  $\pt$;
\item for $A,C\in\D(\pt)_{0}$, $B\in\D(I)_{0}$, $D\in\D(J)_{0}$,
  $\Xi$ fits into the commutative diagram:
  \begin{equation*}
    \xymatrix{\ehm{A}{B}\boxtimes\ehm{C}{D}\ar[r]^{\Xi}&\ehm{A\boxtimes
        C}{B\boxtimes D}\\
      \ar[u]^{\Lambda}_{\sim}\restr{[p_{I}^{*}A,B]}{I\times J}\otimes
      \restr{[p_{J}^{*}C,D]}{I\times J}\ar[d]_{\sim}&[p_{I\times
        J}^{*}(A\otimes C),\restr{B}{I\times J}\otimes
      \restr{D}{I\times J}]\ar[d]^{\sim}\ar[u]_{\Lambda}^{\sim}\\
      [p_{I\times J}^{*}A,\restr{B}{I\times J}]\otimes
      [p_{I\times J}^{*}C,\restr{D}{I\times J}]\ar[r]^{(\ref{eq:prod_ihom})}&[p_{I\times
        J}^{*}A\otimes p_{I\times J}^{*}C,\restr{B}{I\times J}\otimes
      \restr{D}{I\times J}]\mathrlap{.}}
  \end{equation*}
\item for $A,B\in\D(\pt)_{0}$ and $C\in \D(J)_{0}$, $\Omega$ fits into
  the commutative diagram:
  \begin{equation*}
    \xymatrix{\ehm{A}{\ehm{B}{C}}\ar[r]_{\sim}^{\Omega}&\ehm{A\otimes
        B}{C}\\
      \ar[u]^{\Lambda}_{\sim}[p_{J}^{*}A,[p_{J}^{*}B,C]]\ar[dr]_{\sim}&[p_{J}^{*}(A\otimes
      B),C]\ar[d]^{\sim}\ar[u]_{\Lambda}^{\sim}\\
      &[p_{J}^{*}A\otimes
      p_{J}^{*}B,C]\mathrlap{.}}
  \end{equation*}
\item for $A,B\in\D(\pt)_{0}$, $\Upsilon$ is identified with the
  morphism $A\to[[A,B],B]$ which by adjunction corresponds to
  $\mathrm{ev}: A\otimes [A,B]\to B$.
\end{itemize}
All these statements follow easily from the constructions in this
section.

\section{The external trace and homotopy colimits}
\label{sec:app2}
In this section the proof of Proposition~\ref{pro:main} will be
given. Throughout we fix a closed monoidal derivator $\D$ of type
$\mathbf{Dia}$. We start with a preliminary result, already needed to
define the association $\Phi$ on page~\pageref{dfi:Phi}.
\begin{lem}\label{lem:isos}Let  $I\in\mathbf{Dia}_{0}$. Then the following three morphisms are
  invertible:
  \begin{enumerate}
  \item $p_{1!}q_{2!}q_{2}^{*}p_{1}^{*}\to\Id$ (counit of adjunction),
  \item $\Id\to p_{2*}q_{1*}q_{1}^{*}p_{2}^{*}$ (unit of adjunction),
  \item $\overline{\Psi}:[p_{I!}A,B]\to p_{I^{\op}*}\ehm{A}{B}$ for
    $A\in\D(I)_{0}$, $B\in\D(\pt)_{0}$.
  \end{enumerate}
\end{lem}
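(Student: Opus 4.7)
The key observation is that both $p_1 q_2 : \tw{I}^{\op} \to I^{\op}$ and $p_2 q_1 : \tw{I} \to I$ are the ``target'' functor sending an arrow $(a \to b)$ of $I$ to its codomain $b$. Statements (1) and (2) both amount to showing that pulling back along this target functor is fully faithful in $\D$; then (3) will follow quickly via the tensor--hom adjunction.

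The main technical claim is that $t := p_2 q_1 : \tw{I} \to I$ is a Grothendieck opfibration, with fiber $F_b$ over $b \in I$ canonically identified with $(I/b)^{\op}$. Indeed, given $f : b \to b'$ in $I$ and $(a \xrightarrow{\alpha} b) \in \tw{I}$, the $\tw{I}$-morphism with components $(\mathrm{id}_a, f)$ landing at $(a \xrightarrow{f \alpha} b')$ serves as opcartesian lift, and the fiber over $b$ consists of arrows into $b$ with morphisms $\alpha: a' \to a$ satisfying $(a \to b)\alpha = (a' \to b)$, identifying it with $(I/b)^{\op}$. Dually, $p_1 q_2$ is a fibration with fiber $I/b$.

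To prove (1) and (2), the plan is to apply Lemma~\ref{lem:fibration-exact} to the pullback square of the target functor along $b : \pt \to I$, obtaining $b^* t_\# \xrightarrow{\sim} p_{F_b \#} w^*$, where $w$ is the fiber inclusion. The crucial simplification is that $t \circ w : F_b \to I$ is constant at $b$, so $w^* t^* X \cong p_{F_b}^* X_b$, and the counit $b^* t_\# t^* X \to b^* X$ reduces to the map $p_{F_b \#} p_{F_b}^* X_b \to X_b$. Now $(I/b)^{\op}$ admits an initial object $\mathrm{id}_b$ (corresponding to the terminal object of $I/b$), and a standard derivator fact---applying $\D$ to the adjunction $p_J \dashv \iota_*$ in $\mathbf{Cat}$ for $J$ admitting a terminal or initial object---shows $p_J^*$ is fully faithful, so $p_{F_b \#} p_{F_b}^* \cong \Id$. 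Combined with~\ref{der:conservative}, this gives that the counit $t_\# t^* \to \Id$ is invertible, and the symmetric argument for the fibration $p_1 q_2$ yields (1); since $t^*$ is fully faithful, both counit $t_\# t^* \to \Id$ and unit $\Id \to t_* t^*$ are invertible, which gives (2).

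For (3), the factorization $p_{\tw{I}} = p_{I^{\op}} \circ (p_1 q_1)$ combined with the tensor--hom adjunction gives $p_{I^{\op} *} \ehm{A}{B} \cong p_{\tw{I} *} [t^* A, p_{\tw{I}}^* B] \cong [p_{\tw{I} \#} t^* A, B]$, after which $p_{\tw{I} \#} t^* = p_{I\#} t_\# t^* \cong p_{I\#}$ from the preceding step produces the desired iso. The main obstacle will be ensuring that the abstract isomorphism constructed this way coincides with the specific morphism $\overline{\Psi}$ built from the $\Psi$ of Section~\ref{sec:ehom}, which requires a careful diagram chase through the adjunctions used to define it.
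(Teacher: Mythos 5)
Your strategy for (1) and (2) is exactly the paper's: identify $p_{1}q_{2}$ and $p_{2}q_{1}$ as the two ``target'' functors, reduce to the fibres via Lemma~\ref{lem:fibration-exact}, and use that the fibres $I/b$ and $(I/b)^{\op}$ have a terminal (resp.\ initial) object. However, the central step as you state it has a variance error. You claim $b^{*}t_{\#}\xrightarrow{\sim}p_{F_{b}\#}w^{*}$ for the \emph{op}fibration $t=p_{2}q_{1}$. Lemma~\ref{lem:fibration-exact} yields $v_{\#}w^{*}\to x^{*}u_{\#}$ only when the right-hand vertical $u$ is a fibration (or the bottom $x$ an opfibration); for an opfibration on the right it yields the dual statement $u^{*}x_{*}\to w_{*}v^{*}$, i.e.\ fibrewise computation of $t_{*}$, not of $t_{\#}$. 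Moreover the claimed isomorphism is actually false, not merely unjustified: for $I=\underline{2}$ and $b=0$, the comma category computing $0^{*}t_{\#}$ is all of $\tw{\underline{2}}$, so $0^{*}t_{\#}X$ is the homotopy pushout of $X_{\catid_{0}}\leftarrow X_{f}\rightarrow X_{\catid_{1}}$, whereas $p_{F_{0}\#}w^{*}X=X_{\catid_{0}}$; these differ for a general $X\in\D(\tw{\underline{2}})$ (take $X_{\catid_{0}}=X_{f}=0$, $X_{\catid_{1}}\neq 0$). The repair is immediate and is what the paper does: for the fibration $p_{1}q_{2}$ use the $\#$-form of the lemma to check the counit fibrewise on $I/i$, which is (1); for the opfibration $p_{2}q_{1}$ use the $*$-form to check the unit $\Id\to(p_{2}q_{1})_{*}(p_{2}q_{1})^{*}$ fibrewise on $(I/j)^{\op}$, which is (2) (your implicit reading of (2) with lower stars, as it is used in Definition~\ref{dfi:Phi}, is the intended one). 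Either statement then gives full faithfulness of the relevant pullback and hence the other (co)unit for free, as you note.

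For (3) your chain of identifications, $p_{I^{\op}*}\ehm{A}{B}\cong p_{\tw{I}*}[(p_{2}q_{1})^{*}A,p_{\tw{I}}^{*}B]\cong[p_{\tw{I}\#}(p_{2}q_{1})^{*}A,B]\cong[p_{I\#}A,B]$, is the same skeleton the paper uses. But be aware that the step you defer --- checking that this abstract isomorphism is the specific morphism $\overline{\Psi}$ obtained by adjunction from $\Psi$ --- is essentially the entire content of the paper's proof of part (3): it is a sizeable commutative diagram threading $\Lambda$, $\Theta$ and the internal-hom and naturality properties of Section~\ref{sec:ehom} through several units and counits. As a proof it is incomplete until that chase is carried out; as an outline it correctly locates where the work lies.
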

\begin{proof}

  For the first morphism, fix $i\in I_{0}$ and consider the following
  pullback square:
  \begin{equation*}
    \xymatrix{\tw{I}^{\op}_{i}\ar[r]\ar[d]_{p_{i}}&\tw{I}^{\op}\ar[d]^{p_{1}q_{2}}\\
      \pt\ar[r]_{i}&I^{0}\mathrlap{.}}
  \end{equation*}
  Since $q_{2}$ and $p_{1}$ are both fibrations so is their
  composition and by Lemma~\ref{lem:fibration-exact} the
  Beck-Chevalley transformation corresponding to the square above is
  invertible. It follows that for the counit
  $p_{1!}q_{2!}q_{2}^{*}p_{1}^{*}\to \Id$ to be invertible it is
  necessary and sufficient that $p_{i!}p_{i}^{*}\to\Id$ is (for all
  $i\in I_{0}$, by~\ref{der:conservative}). This is equivalent to
  $\Id\to p_{i*}p_{i}^{*}$ being invertible, and this is true since
  $\tw{I}^{\op}_{i}=I/i$ and thus $p_{i*}=\catid_{i}^{*}$. The second morphism in
  the statement of the Lemma is treated in the same way.

  For the last morphism, we consider the following factorization:
  \begin{equation*}
     \xymatrix@C=18pt{[p_{I!}A,B]\ar[r]^-{\mathrm{adj}}\ar[d]^{\sim}&p_{I^{\op}*}p_{I^{\op}}^{*}[p_{I!}A,B]\ar[r]^{\Psi\circ\Lambda}\ar[d]^{\sim}&
    p_{I^{\op}*}\ehm{p_{I}^{*}p_{I!}A}{B}\ar[d]^{\mathrm{adj}}\\
    p_{I*}[A,p_{I}^{*}B]\ar[d]_{\Theta}^{\sim}\ar[r]^-{\mathrm{adj}}&
    p_{I^{\op}*}p_{I^{\op}}^{*}p_{I*}[A,p_{I}^{*}B]\ar[d]_{\Theta}^{\sim}&
    p_{I^{\op}*}\ehm{A}{B}\ar[dd]^{\mathrm{adj}}\\
    p_{I*}p_{2*}q_{2*}q_{2}^{*}\ehm{A}{p_{I}^{*}B}\ar[r]^-{\mathrm{adj}}&p_{I^{\op}*}p_{I^{\op}}^{*}p_{I*}p_{2*}q_{2*}q_{2}^{*}\ehm{A}{p_{I}^{*}B}\\
    p_{I^{\op}*}p_{1*}q_{2*}q_{2}^{*}p_{1}^{*}\ehm{A}{B}\ar[u]_{\sim}^{\Psi}&\ar[u]_{\sim}^{\Psi}p_{I^{\op}*}p_{I^{\op}}^{*}p_{I^{\op}*}p_{1*}q_{2*}q_{2}^{*}p_{1}^{*}\ehm{A}{B}\ar[l]_-{\mathrm{adj}}&p_{I^{\op}*}p_{1*}q_{2*}q_{2}^{*}p_{1}^{*}\ehm{A}{B}\ar[l]_-{\mathrm{adj}}\mathrlap{.}}
  \end{equation*}
  Notice that all the vertical arrows on the left are invertible (the
  first one by Lemma~\ref{lem:mon-der}, the second and third by the
  results of section~\ref{sec:ehom}) as is the vertical arrow on the
  bottom right by part~1 of the lemma. And the composition of the
  horizontal arrows at the bottom is the identity so we only need to
  prove commutativity of the diagram.
  
  This is clear for the left half of the diagram while the right half
  may be decomposed as follows:
  \begin{multline*}
   \xymatrix{
p_{I^{\op}}^{*}[p_{I!}A,B]\ar[r]^{\Lambda}_{\sim}\ar[d]_{\mathrm{adj}}&p_{I^{\op}}^{*}\ehm{p_{I!}A}{B}\ar@{-}[r]^-{\Psi}\ar[d]^{\mathrm{adj}}&\cdots\\
      p_{I^{\op}}^{*}p_{I*}p_{I}^{*}[p_{I!}A,B]\ar[d]\ar[r]^{\Lambda}_{\sim}&\ar@{}[d]|{\numbercircled{1}}p_{I^{\op}}^{*}p_{I*}p_{I}^{*}\ehm{p_{I!}A}{B}\ar@{-}[r]^-{\mathrm{adj}}&\cdots\\
      p_{I^{\op}}^{*}p_{I*}[p_{I}^{*}p_{I!}A,p_{I}^{*}B]\ar[r]^-{\Theta}_-{\sim}\ar[d]_{\mathrm{adj}}&p_{I^{\op}}^{*}p_{I*}p_{2*}q_{2*}q_{2}^{*}\ehm{p_{I}^{*}p_{I!}A}{p_{I}^{*}B}\ar[d]_{\mathrm{adj}}\ar@{-}[r]^-{\sim}&\cdots\\
      p_{I^{\op}}^{*}p_{I*}[A,p_{I}^{*}B]\ar[r]^-{\Theta}_-{\sim}&p_{I^{\op}}^{*}p_{I*}p_{2*}q_{2*}q_{2}^{*}\ehm{A}{p_{I}^{*}B}\ar@{-}[r]^-{\sim}&\cdots}\\\\
    \xymatrix{\cdots\ar[r]&\ehm{p_{I}^{*}p_{I!}A}{B}\ar[r]^-{\mathrm{adj}}&\ehm{A}{B}\ar[d]^{\mathrm{adj}}\\
 \cdots\ar[r]&     p_{I^{\op}}^{*}p_{I*}p_{2*}q_{2*}q_{2}^{*}(p_{I}^{\op}\times
      p_{I})^{*}\ehm{p_{I!}A}{B}\ar[d]^{\Psi}_{\sim}&p_{1*}q_{2*}q_{2}^{*}p_{1}^{*}\ehm{A}{B}\ar[dd]^{\Psi}\\
\cdots\ar[r]&      p_{I^{\op}}^{*}p_{I^{\op}*}p_{1*}q_{2*}q_{2}^{*}\ehm{p_{I}^{*}p_{I!}A}{p_{I}^{*}B}\ar[d]^{\mathrm{adj}}\\
\cdots\ar[r]&      p_{I^{\op}}^{*}p_{I^{\op}*}p_{1*}q_{2*}q_{2}^{*}\ehm{A}{p_{I}^{*}B}\ar[r]^-{\mathrm{adj}}&p_{1*}q_{2*}q_{2}^{*}\ehm{A}{p_{I}^{*}B}\mathrlap{.}}
  \end{multline*}
  Everything except possibly \numbercircled{1} clearly commutes; and
  \numbercircled{1} does so by the internal hom property in
  section~\ref{sec:ehom}.
\end{proof}

From now on we take the assumptions of Proposition~\ref{pro:main} to
be satisfied. First we prove:
\begin{lem}
  $p_{I!}A$ is dualizable.
\end{lem}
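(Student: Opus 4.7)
I want to show that for every $B\in\D(\pt)_{0}$ the canonical morphism
$$\psi_{B}:[p_{I\#}A,\one]\otimes B\longrightarrow [p_{I\#}A,B]$$
is invertible. The idea is to transport everything along $\overline{\Psi}$ into $\D(I^{\op})$, check invertibility there fiberwise, and use \ref{hyp:p_Iop} to reintroduce the tensor on the correct side of $p_{I^{\op}*}$.

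Using Lemma~\ref{lem:isos}(3) I identify $[p_{I\#}A,\one]\xrightarrow{\sim}p_{I^{\op}*}\ehm{A}{\one}=p_{I^{\op}*}A^{\vee}$ and $[p_{I\#}A,B]\xrightarrow{\sim}p_{I^{\op}*}\ehm{A}{B}$. Hypothesis~\ref{hyp:p_Iop} then turns the source of $\psi_{B}$ into $p_{I^{\op}*}(A^{\vee}\otimes p_{I^{\op}}^{*}B)=p_{I^{\op}*}(A^{\vee}\boxtimes B)$. It therefore suffices to produce and prove invertible a natural morphism
$$\varphi_{B}:A^{\vee}\boxtimes B\longrightarrow \ehm{A}{B}$$
in $\D(I^{\op})$ whose image under $p_{I^{\op}*}$ fits with $\psi_{B}$ under the identifications above. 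The natural candidate is the direct analogue of~\eqref{eq:application}: starting from $A^{\vee}\boxtimes B=\ehm{A}{\one}\boxtimes B$, rewrite $B$ as $\ehm{\one}{B}$ using normalization ($\Lambda$), apply the external-product morphism $\Xi$ to land in $\ehm{A\boxtimes\one}{\one\boxtimes B}$, and conclude with the canonical identification with $\ehm{A}{B}$.

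Invertibility of $\varphi_{B}$ is then verified pointwise via~\ref{der:conservative}. For $i\in I_{0}$, the naturality property $\Psi$ combined with normalization identifies the fiber $(i^{\op})^{*}\varphi_{B}$ with the classical dualizability map $[A_{i},\one]\otimes B\to[A_{i},B]$, which is an isomorphism because $A_{i}$ is dualizable by hypothesis. What remains is a compatibility check: one must see that $p_{I^{\op}*}\varphi_{B}$, followed by the identifications above, reproduces the canonical map $\psi_{B}$. This is a formal diagram chase using the mutual compatibilities of $\Psi$, $\Theta$, $\Xi$, $\Lambda$, and the adjunction $(p_{I}^{*},p_{I^{\op}*})$ listed in Section~\ref{sec:ehom}, with the normalization identities stated on p.~\pageref{sec:ehom-properties-proof-normalization} doing most of the work. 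This bookkeeping is the main obstacle; there is no new geometric content beyond fiberwise dualizability of $A$ and the two hypotheses \ref{hyp:limcolim} and \ref{hyp:p_Iop}.
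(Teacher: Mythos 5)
Your proposal is correct and follows essentially the same route as the paper: transport the dualizability map along $\overline{\Psi}$ from Lemma~\ref{lem:isos}, use~\labelcref{hyp:p_Iop} to pull the tensor factor inside $p_{I^{\op}*}$, and then check invertibility of the resulting instance of~\eqref{eq:application} fiberwise via~\ref{der:conservative} and the fiberwise dualizability of $A$. The remaining compatibility diagram is exactly what the paper also leaves as an exercise.
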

\begin{proof}
  We are given an object $B$ in $\D(\pt)$ and we need to show that the
  top arrow in the following diagram is invertible:
  \begin{equation*}
    \xymatrix{[p_{I!}A,\one]\otimes
      B\ar[r]\ar[d]^{\sim}_{\overline{\Psi}}&[p_{I!}A,\one\otimes
      B]\ar[d]^{\overline{\Psi}}_{\sim}\\
      p_{I^{\op}*}\ehm{A}{\one}\otimes
      B\ar[d]^{\sim}&p_{I^{\op}*}\ehm{A}{\one\otimes B}\\
      p_{I^{\op}*}(\ehm{A}{\one}\boxtimes B)\ar[r]^-{\sim}_-{\Xi}&p_{I^{\op}*}\ehm{A}{\one\boxtimes B}\ar@{=}[u]\mathrlap{.}}
  \end{equation*}
  The two arrows labeled $\overline{\Psi}$ are invertible by the
  previous lemma, as is the vertical arrow on the bottom left by
  hypothesis~\labelcref{hyp:p_Iop}. Given $i\in I_{0}$, the fiber over
  $i$ of the morphism $\Xi:\ehm{A}{\one}\boxtimes B\to
  \ehm{A}{\one\boxtimes B}$ corresponds to the morphism
  $[i^{*}A,\one]\otimes B\to [i^{*}A,\one\otimes B]$ by the external
  product and normalization properties in section~\ref{sec:ehom}. The
  latter morphism is invertible since $A$ is fiberwise dualizable
  hence also the bottom horizontal arrow in the diagram is invertible
  (by~\ref{der:conservative}). It now suffices to prove its
  commutativity which we leave as an easy exercise.
\end{proof}

To prove commutativity of the diagram~(\ref{eq:main_diagram}) with
$g=\mathrm{Tr}(f)$ and the top horizontal arrow replaced by
$\mathrm{Tr}(p_{I!}f)$ we decompose $\mathrm{Tr}(f)$ into
coevaluation, the morphism induced by $f$ and evaluation, and
similarly for $\mathrm{Tr}(p_{I!}f)$.  Schematically:
\begin{multline}\label{eq:colim-schema}
  \xymatrix{S\ar[r]^-{\mathrm{coev}}\ar[d]&(p_{I!}A)^{*}\otimes
    p_{I!}A\otimes S\ar@{-}[r]^-{p_{I!}f}\ar[d]&\cdots\\
    p_{I^{\op}*}p_{1!}(q_{2!}\one\otimes
    \restr{S}{I^{\op}\times I})\ar[r]_-{\mathrm{coev}}&
    p_{I^{\op}*}p_{1!}(A^{\vee}\boxtimes A\otimes
    \restr{S}{I^{\op}\times I})\ar@{-}[r]_-{f}&\cdots}\\\\
  \xymatrix{\cdots\ar[r]&(p_{I!}A)^{*}\otimes
    p_{I!}A\otimes T\ar[r]^-{\mathrm{ev}}&T\\
    \cdots\ar[r]&      \ar[u]p_{I^{\op}*}p_{1!}(A^{\vee}\boxtimes A\otimes
    \restr{T}{I^{\op}\times I})\ar[r]_-{\sim\circ\mathrm{ev}} &\ar[u]p_{I!}p_{2*}(q_{1*}\one\otimes
    \restr{T}{I^{\op}\times I})\mathrlap{.}}
\end{multline}
The vertical morphisms in the middle will be described below but we
can already say here that they will be easily seen to make the square
in the middle commute. Now the fact that we have isomorphisms
\begin{align*}
  p_{I^{\op}*}(\plho\otimes p_{I^{\op}}^{*}\plho)\cong p_{I^{\op}*}\plho\otimes\plho,&&    p_{1!}(\plho\otimes p_{1}^{*}\plho)\cong p_{1!}\plho\otimes\plho
\end{align*}
allows us to neglect the twisting:
\begin{lem}
  We may assume $S=T=\one$.
\end{lem}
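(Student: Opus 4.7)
The plan is to exploit the two projection formulas
\begin{equation*}
  p_{I^{\op}*}(X\otimes p_{I^{\op}}^{*}Y)\xrightarrow{\sim} p_{I^{\op}*}X\otimes Y
  \quad\text{and}\quad
  p_{1\#}(X\otimes p_1^{*}Y)\xrightarrow{\sim} p_{1\#}X\otimes Y,
\end{equation*}
the first from hypothesis~\labelcref{hyp:p_Iop}, the second from Corollary~\ref{cor:fibration} (the product projection $p_1$ being a fibration), together with their analogues for $p_{I\#}$ (the monoidal derivator axiom) and $p_{2*}$ (Corollary~\ref{cor:fibration} again) which govern the right column of~\eqref{eq:main_diagram}.

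The main observation is that $S$ and $T$ enter diagram~\eqref{eq:main_diagram} only as external tensor factors through pullbacks along $p_{I^{\op}}$, $p_{2}$ and $p_{\tw{I}^{\op}}$. First I would expand $\mathrm{Tr}(f)$ according to its definition as a composition of $\mathrm{coev}\otimes\catid_{p_2^{*}S}$, $\catid_{A^{\vee}}\boxtimes f$ and $\mathrm{ev}\otimes\catid_{p_2^{*}T}$, and likewise expand the top row of schema~\eqref{eq:colim-schema} as the untwisted coevaluation and evaluation of $p_{I\#}A$ sandwiching $p_{I\#}f$. Using the identity $p_2^{*}S = p_1^{*}p_{I^{\op}}^{*}S$ (and its $T$-counterpart) I would walk through~\eqref{eq:main_diagram} and pull $S$ outside every occurrence of $p_{I^{\op}*}p_{1\#}$ in the left column, and $T$ outside every occurrence of $p_{I\#}p_{2*}$ in the right column, by iterated application of the projection formulas above.

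After this rewriting, every object in the left column becomes canonically isomorphic to $Z\otimes S$, where $Z$ is the corresponding object in the $S=T=\one$ version of the diagram, and every object in the right column takes the form $Z'\otimes T$. Naturality of the projection formula isomorphisms in the $X$-slot then forces each structural arrow of~\eqref{eq:main_diagram} — units and counits of adjunction, the canonical maps $\Psi$, $\Xi$, $\Lambda$, together with $\mathrm{coev}$ and $\mathrm{ev}$ — to decompose as the corresponding arrow of the untwisted diagram tensored with $\catid_S$ or $\catid_T$, the morphisms induced by $f$ and $p_{I\#}f$ carrying all the residual dependence on the twist. Since commutativity of a diagram of morphisms is preserved under tensoring with identities, the commutativity of~\eqref{eq:main_diagram} for arbitrary $(f,S,T)$ becomes a formal consequence of its commutativity in the $S=T=\one$ case, and the reduction is complete.

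I expect the main obstacle to be purely organizational: verifying that each of the arrows appearing in~\eqref{eq:main_diagram} is compatible with the projection formulas amounts to a long but routine list of naturality checks, entirely in the spirit of those already carried out in sections~\ref{sec:ehom} and~\ref{sec:trace-dfi}. No conceptual difficulty should arise beyond the bookkeeping.
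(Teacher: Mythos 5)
Your proposal is correct and follows essentially the same route as the paper: the paper likewise uses the projection isomorphisms $p_{I^{\op}*}(\cdot\otimes p_{I^{\op}}^{*}\cdot)\cong p_{I^{\op}*}\cdot\otimes\cdot$ and $p_{1\#}(\cdot\otimes p_{1}^{*}\cdot)\cong p_{1\#}\cdot\otimes\cdot$ to factor each twisted square of~\eqref{eq:colim-schema} into a naturality square for these isomorphisms stacked on the untwisted square tensored with $\catid_{S}$ (resp.\ $\catid_{T}$). The only difference is presentational: the paper carries this out explicitly for the coevaluation square and dualizes for the evaluation side, rather than treating all of~\eqref{eq:main_diagram} at once.
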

\begin{proof}
  Consider the following diagram:
  \begin{equation*}
    \xymatrix{\one\otimes
      S\ar[r]^-{\mathrm{coev}}\ar[d]&(p_{I!}A)^{*}\otimes
      p_{I!}A\otimes S\ar[d]\\
      p_{I^{\op}*}p_{1!}q_{2!}\one\otimes
      S\ar[r]^-{\mathrm{coev}}\ar[d]_{\sim}&p_{I^{\op}*}p_{1!}(A^{\vee}\boxtimes
      A)\otimes S\ar[d]^{\sim}\\
      p_{I^{\op}*}p_{1!}(q_{2!}\one\otimes
      p_{2}^{*}p_{I}^{*}S)\ar[r]^-{\mathrm{coev}}&p_{I^{\op}*}p_{1!}(A^{\vee}\boxtimes
      A\otimes p_{2}^{*}p_{I}^{*}S)\mathrlap{.}}      
  \end{equation*}
  It is easy to check that the composition of the two vertical
  morphisms on the left equals the left vertical morphism
  in~(\ref{eq:colim-schema}). Moreover the bottom square clearly
  commutes thus we are left to prove the commutativity of the top
  square but this does not depend on $S$. A similar argument shows
  that we may assume $T=\one$.
\end{proof}

\begin{lem}
  The left square in~(\ref{eq:colim-schema}) commutes.
\end{lem}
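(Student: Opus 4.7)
The plan is to reduce both coevaluations to their defining expressions and then transport them to a common side via the isomorphism $\overline{\Psi}\colon[p_{I\#}A,B]\xrightarrow{\sim}p_{I^{\op}*}\ehm{A}{B}$ established in Lemma~\ref{lem:isos}(3). After the simplification $S=T=\one$ already made, we may factor the top horizontal arrow as
\begin{equation*}
  \one\xrightarrow{\mathrm{adj}}[p_{I\#}A,p_{I\#}A]\xleftarrow{\sim}(p_{I\#}A)^{*}\otimes p_{I\#}A,
\end{equation*}
using the characterization of the classical coevaluation recalled around~\eqref{eq:coev-classical}, and the bottom horizontal arrow as $p_{I^{\op}*}p_{1\#}$ applied to
\begin{equation*}
  q_{2\#}\one\xrightarrow{\mathrm{adj}}\ehm{A}{A}\xleftarrow{\sim}A^{\vee}\boxtimes A,
\end{equation*}
per the definition of the external coevaluation in~\eqref{eq:coev-new}.

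First I would handle the ``rightmost'' piece, namely the isomorphisms $(p_{I\#}A)^{*}\otimes p_{I\#}A\xleftarrow{\sim}[p_{I\#}A,p_{I\#}A]$ and $A^{\vee}\boxtimes A\xleftarrow{\sim}\ehm{A}{A}$. I claim these are compatible via $\overline{\Psi}$, in the sense that the square built from them and $\overline{\Psi}$ (applied to both factors $[p_{I\#}A,\one]\cong p_{I^{\op}*}A^{\vee}$ and $[p_{I\#}A,p_{I\#}A]\cong p_{I^{\op}*}\ehm{A}{A}$) commutes after applying $p_{I^{\op}*}p_{1\#}$. The commutativity here reduces, through~\eqref{eq:application} on one side and~\eqref{eq:dualizability} on the other, to the compatibility of $\Xi$ and $\Lambda$ with $\Psi$ established in the ``external product'' and ``normalization'' subsections of Section~\ref{sec:ehom}, combined with hypothesis~\labelcref{hyp:p_Iop} which allows $p_{I^{\op}*}$ to be pulled inside the tensor product.

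Second, I would handle the ``leftmost'' piece, the adjunction units $\one\to[p_{I\#}A,p_{I\#}A]$ and $q_{2\#}\one\to\ehm{A}{A}$. The square to check reads, after applying $\overline{\Psi}$, as the outer rectangle of a diagram that factors through $\one\to p_{I^{\op}*}p_{I^{\op}}^{*}\one\xrightarrow{\sim}p_{I^{\op}*}p_{1\#}q_{2\#}\one$, the latter isomorphism being exactly the part of Lemma~\ref{lem:isos} used to define the left vertical morphism in~\eqref{eq:main_diagram}. Commutativity here is essentially the statement that $\overline{\Psi}$ intertwines the $(p_I^{*},p_{I\#})$-adjunction unit on $p_{I\#}A$ with the corresponding unit coming from the external adjunction $q_{2\#}\dashv q_{2}^{*}$; this is unwound directly from the construction of $\overline{\Psi}$ given on page~\pageref{dfi:Phi}, together with the triangle identities.

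Finally I would splice the two pieces: both halves of the square, after applying $\overline{\Psi}$, are paths around a larger diagram whose interior squares commute by naturality of $\Psi$, the compatibility of $\Theta$ with $\Lambda$, and standard adjunction triangle identities. The main obstacle is purely organizational: there are a lot of simultaneous identifications (the internal/external hom interchange $\Theta$, the external product map $\Xi$, the adjunction map $\Omega$, the normalization $\Lambda$, and $\overline{\Psi}$ itself), and care is required to verify that each intermediate square really does commute for the reason claimed. No new geometric input is needed; every small square reduces to a formal property already listed in Section~\ref{sec:ehom} or to the definition of $\overline{\Psi}$.
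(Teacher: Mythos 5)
Your proposal follows essentially the same route as the paper: reduce to $S=T=\one$, factor both coevaluations into the adjunction unit followed by the inverse of the canonical isomorphism (\eqref{eq:coev-classical} on top, \eqref{eq:application} on the bottom), bridge the two rows via $\overline{\Psi}$ from Lemma~\ref{lem:isos}, and reduce each resulting subsquare to the compatibility of $\Lambda$, $\Theta$, $\Xi$ with $\Psi$ from section~\ref{sec:ehom} together with hypothesis~\labelcref{hyp:p_Iop}. This is exactly the decomposition the paper carries out (its rectangles \numbercircled{1}--\numbercircled{3}), so the approach is correct; only the explicit diagram chases remain to be written out, and the passing mention of $\Omega$ is unnecessary for this square (it enters only in the evaluation half).
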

\begin{proof}
  By the previous lemma we may assume $S=\one$. Again, we factor the
  coevaluation morphisms on the top and bottom into two parts as
  in~(\ref{eq:coev-classical}) and~(\ref{eq:coev-new})
  respectively. This decomposes the left square
  in~(\ref{eq:colim-schema}) into two parts which we consider
  separately.

  By adjunction, the first one may be expanded as follows (the arrows
  labeled with a small Greek letter will be defined below):
  \begin{multline*}
    \xymatrix{p_{I^{\op}}^{*}\one\ar[r]^-{\mathrm{adj}}&p_{I^{\op}}^{*}[p_{I!}A,p_{I!}A]\ar@{-}[r]^-{\alpha}&\cdots\\
    \ar[d]_{\sim}\ar[u]^{\mathrm{adj}}_{\sim}(p_{1}q_{2})_{!}(p_{1}q_{2})^{*}\one\ar[r]^-{\mathrm{adj}}&\ar@{}[rd]|{\numbercircled{1}}    \ar@{=}[d]\ar[u]^{\sim}_{\mathrm{adj}}(p_{1}q_{2})_{!}(p_{1}q_{2})^{*}p_{I^{\op}}^{*}[p_{I!}A,p_{I!}A]\ar@{-}[r]^-{\alpha}&\cdots\\
(p_{1}q_{2})_{!}(p_{2}q_{2})^{*}\one\ar[r]^-{\mathrm{adj}}&(p_{1}q_{2})_{!}(p_{2}q_{2})^{*}p_{I}^{*}[p_{I!}A,p_{I!}A]\ar@{-}[r]^-{\beta}&\cdots\\
&&\\
    \ar@{=}[uu](p_{1}q_{2})_{!}(p_{2}q_{2})^{*}\one\ar@{-}[rr]^-{\Theta\circ\mathrm{adj}}&&\cdots
  }\\\\
  \xymatrix{\cdots\ar[r]&\ehm{p_{I}^{*}p_{I!}A}{p_{I!}A}\ar[r]^-{\mathrm{adj}}&\ehm{A}{p_{I!}A}\\
\cdots\ar[r]&    (p_{1}q_{2})_{!}(p_{1}q_{2})^{*}\ehm{p_{I}^{*}p_{I!}A}{p_{I!}A}\ar[u]^{\sim}_{\mathrm{adj}}&\\
\cdots\ar[r]&(p_{1}q_{2})_{!}(p_{2}q_{2})^{*}(p_{2}q_{2})_{*}q_{2}^{*}\ehm{p_{I}^{*}p_{I!}A}{p_{I}^{*}p_{I!}A}\ar[d]^{\mathrm{adj}}\ar[u]_{\gamma}&p_{1!}p_{1}^{*}\ehm{A}{p_{I!}A}\ar[d]^{\Psi}_{\sim}\ar[uu]_{\mathrm{adj}}\\
&(p_{1}q_{2})_{!}(p_{2}q_{2})^{*}(p_{2}q_{2})_{*}q_{2}^{*}\ehm{A}{p_{I}^{*}p_{I!}A}\ar[r]^-{\mathrm{adj}}&p_{1!}\ehm{A}{p_{I}^{*}p_{I!}A}\\
\cdots\ar[r]&(p_{1}q_{2})_{!}(p_{2}q_{2})^{*}(p_{2}q_{2})_{*}q_{2}^{*}\ehm{A}{A}\ar[u]_{\mathrm{adj}}\ar[r]_-{\mathrm{adj}}&p_{1!}\ehm{A}{A}\ar[u]_{\mathrm{adj}}\mathrlap{,}}
  \end{multline*}
  and the second one as follows:
  \begin{equation*}
    \xymatrix{p_{I^{\op}}^{*}[p_{I!}A,p_{I!}A]\ar@{}[rd]|{\numbercircled{2}}\ar[d]_{\mathrm{adj}\circ\alpha}&p_{I^{\op}}^{*}([p_{I!}A,\one]\otimes
      p_{I!}A)\ar[d]^{\delta}\ar[l]_-{\sim}\\\ar@{}[rd]|{\numbercircled{3}}
      \ehm{A}{p_{I!}A}&\ehm{A}{\one}\otimes
      p_{I^{\op}}^{*}p_{I!}A\ar[l]_-{\sim}^-{(\ref{eq:application})}\\
     \ar[u]_{\sim}^{\overline{\Psi}} p_{1!}\ehm{A}{A}&\ar[l]_-{\sim}^-{(\ref{eq:application})}p_{1!}(p_{1}^{*}\ehm{A}{\one}\otimes p_{2}^{*}A)\ar[u]_{\sim}\mathrlap{.}}
  \end{equation*}
  Notice first that these two diagrams indeed ``glue'' together. Thus
  it suffices to show commutativity of the rectangles marked with a
  number (the other ones are easily seen to commute).

  \numbercircled{1} may be expanded as follows (set $B=p_{I!}A$):
  \begin{equation*}
    \xymatrix{q_{2}^{*}p_{1}^{*}p_{I^{\op}}^{*}[B,B]\ar@{=}[d]\ar[r]^{\Lambda}_{\sim}&q_{2}^{*}p_{1}^{*}p_{I^{\op}}^{*}\ehm{B}{B}\ar[r]^{\Psi}_{\sim}&q_{2}^{*}p_{1}^{*}\ehm{p_{I}^{*}B}{B}\ar[d]_{\sim}^{\Psi}\\
      q_{2}^{*}p_{2}^{*}p_{I}^{*}[B,B]\ar[r]_{\sim}^{\Lambda}&q_{2}^{*}p_{2}^{*}p_{I}^{*}\ehm{B}{B}\ar[r]^{\Psi}_{\sim}&q_{2}^{*}\ehm{p_{I}^{*}B}{p_{I}^{*}B}\\
 q_{2}^{*}p_{2}^{*}p_{I}^{*}[B,B]\ar@{=}[u]\ar[r]_{\sim}&q_{2}^{*}p_{2}^{*}[p_{I}^{*}B,p_{I}^{*}B]\ar[r]^-{\Theta}_-{\sim}&q_{2}^{*}p_{2}^{*}p_{2*}q_{2*}q_{2}^{*}\ehm{p_{I}^{*}B}{p_{I}^{*}B}\ar[u]_{\mathrm{adj}}\mathrlap{.}}
  \end{equation*}
  The top rectangle commutes by the naturality property, the bottom
  rectangle by the internal hom property of section~\ref{sec:ehom}.

  For \numbercircled{2} consider the following decomposition (by
  adjunction again):
  \begin{equation*}
    \xymatrixcolsep{10pt}
    \scalebox{.95}{\xymatrix{[p_{I!}A,p_{I!}A]\ar[d]_{\Lambda\circ\mathrm{adj}}&\ar[l]_{\sim}[p_{I!}A,\one]\otimes
      p_{I!}A\ar[d]\ar@{=}[r]&[p_{I!}A,\one]\otimes p_{I!}A\ar[d]^{\Lambda\circ\mathrm{adj}}\\
    p_{I^{\op}*}p_{I^{\op}}^{*}\ehm{p_{I!}A}{p_{I!}A}\ar[dd]^{\sim}_{\Psi}&p_{I^{\op}*}p_{I^{\op}}^{*}(\ehm{p_{I!}A}{\one}\otimes
    p_{I!}A)\ar[l]_-{\sim}^-{(\ref{eq:application})}\ar[dr]_{\sim}&p_{I^{\op}*}p_{I^{\op}}^{*}\ehm{p_{I!}A}{\one}\otimes
    p_{I!}A\ar[d]^{\sim}
\\
    &&p_{I^{\op}*}(p_{I^{\op}}^{*}\ehm{p_{I!}A}{\one}\otimes
    p_{I^{\op}}^{*}p_{I!}A)\ar[d]^{\Psi}_{\sim}\\
    p_{I^{\op}*}\ehm{p_{I}^{*}p_{I!}A}{p_{I!}A}\ar[d]_{\mathrm{adj}}&&p_{I^{\op}*}(\ehm{p_{I}^{*}p_{I!}A}{\one}\otimes
    p_{I^{\op}}^{*}p_{I!}A)\ar[d]^{\mathrm{adj}}\ar[ll]_{\sim}^{(\ref{eq:application})}\\
    p_{I^{\op}*}\ehm{A}{p_{I!}A}&&p_{I^{\op}*}(\ehm{A}{\one}\otimes
    p_{I^{\op}}^{*}p_{I!}A)\ar[ll]_{\sim}^{(\ref{eq:application})}\mathrlap{.}}}
  \end{equation*}
  The top left square commutes by the normalization property, the
  pentagon in the middle by the external product and normalization
  properties of section~\ref{sec:ehom}. The rest is clearly
  commutative. (One also needs here Lemma~\ref{lem:isos} to ensure
  that the morphism corresponding to $\delta$ under adjunction is
  invertible.)

  Next, we may decompose \numbercircled{3} by adjunction as follows:
  \begin{equation*}
    \xymatrix{    p_{1}^{*}\ehm{A}{p_{I!}A}\ar[d]^{\sim}_{\Psi}&p_{1}^{*}(\ehm{A}{\one}\boxtimes
      p_{I!}A)\ar[l]^-{\sim}_-{(\ref{eq:application})}\ar[d]^{\sim}\\
      \ehm{A}{p_{I}^{*}p_{I!}A}&\ehm{A}{\one}\boxtimes
      p_{I}^{*}p_{I!}A\ar[l]^-{\sim}_-{(\ref{eq:application})}\\
      \ehm{A}{A}\ar[u]^{\mathrm{adj}}&\ehm{A}{\one}\boxtimes
      A\ar[l]_-{(\ref{eq:application})}^-{\sim}\ar[u]_{\mathrm{adj}}\mathrlap{.}}
  \end{equation*}
  Both squares commute by the external product property in
  section~\ref{sec:ehom}.
\end{proof}
The following lemma completes the proof of Proposition~\ref{pro:main}.
\begin{lem}
  The right square in~(\ref{eq:colim-schema}) commutes.
\end{lem}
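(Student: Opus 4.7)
The strategy for the right square is dual to the one for the left square: since the evaluation map in both settings is obtained by dualizing the coevaluation (compare~\eqref{eq:coev-ev} and~\eqref{eq:ev-new-pre}), the work already done for the coevaluation should transport to the evaluation by applying $[\cdot,\one]$, respectively $\ehm{\cdot}{\one}$, and replacing $A$ by its dual $A^{\vee}$. First I would reduce to $T=\one$ by the same factoring-out trick used to reduce to $S=\one$, which is available because of the projection isomorphisms coming from hypotheses~\ref{hyp:p_Iop} and~\ref{hyp:limcolim}, together with Corollary~\ref{cor:fibration}(1) to handle $p_{1\#}$ on the fibration $p_{1}$. As before, this is a clean bottom-square commutativity that leaves the content of the square independent of $T$.

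Next I would factor each evaluation into its defining pieces. On top, the classical $[p_{I\#}A,\one]\otimes p_{I\#}A\to\one$ is the composite of the internal dualization~\eqref{eq:delta} with $[\mathrm{coev},\one]$ and the canonical $[\one,\one]\cong\one$. On the bottom, $A^{\vee}\boxtimes A\to q_{1*}\one$ is the composite of $\Upsilon$, the inverse of~\eqref{eq:application} applied to $A^{\vee}$, $\Omega$, $\ehm{\mathrm{coev}}{\one}$, and $\overline{\Psi}$. After applying $p_{I^{\op}*}p_{1\#}$ to the bottom and $(\cdot)$ to the top, the right square in~\eqref{eq:colim-schema} decomposes into two sub-squares: (i) a \emph{biduality square} comparing~\eqref{eq:delta} with $\Upsilon$ transported through $p_{I^{\op}*}p_{1\#}$ and $\overline{\Psi}$, and (ii) a \emph{dualized coevaluation square} comparing $[\mathrm{coev},\one]$ for $p_{I\#}A$ with $\ehm{\mathrm{coev}}{\one}$ for $A$.

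Square (ii) I plan to reduce to the commutativity of the left square, already proved in the previous lemma, applied with $A$ replaced by $A^{\vee}$ and everything dualized. For this one uses Lemma~\ref{lem:isos}(3) to identify $[p_{I\#}A^{\vee},\one]$ with $p_{I^{\op}*}\ehm{A^{\vee}}{\one}$, the biduality property of the external hom to identify $p_{I\#}A^{\vee}$ with $(p_{I\#}A)^{\vee}$, and the normalization and adjunction properties of section~\ref{sec:ehom} to pass $[\mathrm{coev},\one]$ through these isomorphisms. Square (i) I would handle by invoking the compatibility of $\Upsilon$ with $\Psi$ stated at the end of the biduality subsection, the description of $\overline{\Psi}$ as an adjoint of $\Psi$, and the fact that~\eqref{eq:delta} is itself defined from the same coherence data of the closed monoidal structure.

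The hard part will be step (i): tracking the many coherence isomorphisms $\Upsilon$, $\Psi$, $\overline{\Psi}$, $\Omega$, $\Lambda$, and the closed-category associator when comparing the pair $(p_{I\#}A,(p_{I\#}A)^{\vee})$ with its derivator-theoretic analogue $(A,A^{\vee})$ carried through $p_{I^{\op}*}p_{1\#}$. Once this is laid out as a large commutative diagram of the same flavor as the expansions in the previous lemma, every remaining sub-square will commute by the naturality hexagons for $\Xi$, $\Omega$, $\Lambda$, $\Psi$, $\Upsilon$ catalogued in section~\ref{sec:ehom} and appendix~\ref{sec:app1}, and the proof will conclude.
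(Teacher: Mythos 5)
Your plan follows the paper's proof essentially step for step: reduce to $T=\one$ by the same factoring argument, unravel both evaluation maps into their defining pieces, observe that the $[\mathrm{coev},\one]$-versus-$\ehm{\mathrm{coev}}{\one}$ sub-square is precisely the contravariant dual of the already-proven left square, and dispose of the remaining biduality and coherence sub-squares using $\Upsilon$, $\Psi$, $\overline{\Psi}$, $\Xi$ and the properties catalogued in section~\ref{sec:ehom}. One small correction: that key sub-square is $\ehm{\cdot}{\one}$ applied to the left square for $A$ itself (the paper's rectangle \numbercircled{7}), not the left square with $A$ replaced by $A^{\vee}$.
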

\begin{proof}
  Again, we may assume $T=\one$ by the lemma above. First,
  (\ref{eq:coev-ev}) lets us replace the evaluation morphism on the
  top by the following composition (the arrows labeled with a small Greek
  letter will be defined below):
  \begin{equation*}
    \xymatrixcolsep{14pt}
\scalebox{0.9}{\xymatrix{\ar@{}[rd]|{\numbercircled{4}}\ar[d]^{\sim}_{\Lambda}(p_{I!}A)^{*}\otimes
      p_{I!}A\ar[r]_-{\sim}&\ar@{}[rd]|{\numbercircled{6}}(p_{I!}A)^{*}\otimes
      (p_{I!}A)^{**}\ar[r]\ar[d]^{\sim}_{\overline{\Psi}\circ\Lambda}&\ar@{}[rd]|{\numbercircled{7}}((p_{I!}A)^{*}\otimes
      p_{I!}A)^{*}\ar[r]^-{\mathrm{coev}}&\one\\
      \ar@{}[rd]|{\numbercircled{5}}\ar[r]_-{\sim}^-{\Upsilon}p_{I^{\op}*}A^{\vee}\otimes
      p_{I!}A&p_{I!}A^{\vee\vee}\otimes
      p_{I^{\op}*}A^{\vee}&\ar[u]^{\theta}_{\sim}p_{I!}(A^{\vee}\boxtimes
      p_{I!}A)^{\vee}\ar[d]^{\sim}_{\overline{\Psi}}&\\
    \ar[r]_-{\sim}\ar[u]^{\varepsilon}_{\sim}p_{I^{\op}*}p_{1!}(A^{\vee}\boxtimes
    A)&\ar[u]^{\eta}_{\sim}\ar[r]_{\Xi}p_{I!}p_{2*}\mu_{*}(A^{\vee\vee}\boxtimes
    A^{\vee})&\ar[r]_{\mathrm{coev}}p_{I!}p_{2*}\mu_{*}(A^{\vee}\boxtimes A)^{\vee}&p_{I!}p_{2*}\mu_{*}\ehm{q_{2!}\one}{\one}\ar[uu]\mathrlap{.}}
}    
  \end{equation*}
  The commutativity of \numbercircled{4} can be checked on each
  tensor factor separately; only one of them is possibly non-obvious:
  \begin{equation*}
    \xymatrix{A\ar[r]^-{\mathrm{adj}}\ar[d]_{\Upsilon}^{\sim}&p_{I}^{*}p_{I!}A\ar[d]_{\Upsilon}^{\sim}\ar[r]^-{\Upsilon}_-{\sim}&p_{I}^{*}\ehm{\ehm{p_{I!}A}{\one}}{\one}\ar[d]^{\Psi}_{\sim}\\
      \ehm{\ehm{A}{\one}}{\one}\ar[d]_{\mathrm{adj}}\ar[r]^-{\mathrm{adj}}&\ehm{\ehm{p_{I}^{*}p_{I!}A}{\one}}{\one}\ar[r]_-{\sim}^-{\Psi}&\ehm{p_{I^{\op}}^{*}\ehm{p_{I!}A}{\one}}{\one}\\
      \ehm{p_{I^{\op}}^{*}p_{I^{\op}*}\ehm{A}{\one}}{\one}\ar[rru]^{\overline{\Psi}}&&p_{I}^{*}\ehm{p_{I^{\op}*}\ehm{A}{\one}}{\one}\ar[ll]_{\sim}^{\Psi}\ar@/{}_{5pc}/[uu]_{\overline{\Psi}}\mathrlap{.}}
  \end{equation*}
  The two squares in the top row commute by the biduality property of
  section~\ref{sec:ehom} while the rest is clearly commutative.

  \numbercircled{5} may be decomposed as follows:
  \begin{equation*}
    \xymatrixcolsep{10pt}
    \scalebox{0.88}{\xymatrix{\ar[d]_{}\ar@{=}[r]p_{I^{\op}*}A^{\vee}\otimes
      p_{I!}A&p_{I^{\op}*}A^{\vee}\otimes
      p_{I!}A\ar[r]&p_{I!}A\otimes p_{I^{\op}*}A^{\vee}\ar[r]^-{\Upsilon}&p_{I!}A^{\vee\vee}\otimes
      p_{I^{\op}*}A^{\vee}\\
      p_{I^{\op}*}(A^{\vee}\otimes
      p_{I^{\op}}^{*}p_{I!}A)&\ar[d]_{}\ar[u]^{}p_{I!}(p_{I}^{*}p_{I^{\op}*}A^{\vee}\otimes
      A)\ar[r]&\ar[r]^-{\Upsilon}\ar[d]_{}\ar[u]^{}p_{I!}(A\otimes p_{I}^{*}p_{I^{\op}*}A^{\vee})&\ar[d]_{}\ar[u]^{}p_{I!}(A^{\vee\vee}\otimes p_{I}^{*}p_{I^{\op}*}A^{\vee})\\
      \ar[u]^{}p_{I^{\op}*}(A^{\vee}\otimes
      p_{1!}p_{2}^{*}A)&\ar[d]\ar[r]p_{I!}(p_{2*}p_{1}^{*}A^{\vee}\otimes
      A)&\ar[d]\ar[r]^-{\Upsilon}p_{I!}(A\otimes p_{1*}'p_{2}'^{*}A^{\vee})&p_{I!}(A^{\vee\vee}\otimes p_{1*}'p_{2}'^{*}A^{\vee})\ar[d]\\
      \ar[u]^{}p_{I^{\op}*}p_{1!}(A^{\vee}\boxtimes
    A)&\ar[r]\ar[l]^{}p_{I!}p_{2*}(A^{\vee}\boxtimes
    A)&\ar[r]^-{\Upsilon}p_{I!}p_{2*}\mu_{*}(A\boxtimes
    A^{\vee})&p_{I!}p_{2*}\mu_{*}(A^{\vee\vee}\boxtimes
    A^{\vee})\mathrlap{.}}}
  \end{equation*}
  Here, $p_{1}'$ and $p_{2}'$ are the projections onto the factors of
  $I\times I^{\op}$ and all arrows are invertible. All rectangles of
  this diagram are easily seen to commute (for the leftmost one may
  use \cite[2.1.105]{ayoub07-thesis-1}).

  Next we turn to \numbercircled{6}. In the decomposition of it (use the
  normalization property of section~\ref{sec:ehom} for the top
  horizontal arrow),
  \begin{equation*}
    \xymatrix{(p_{I!}A)^{\vee\vee}\boxtimes (p_{I!}A)^{\vee}\ar[r]^-{\Xi}&((p_{I!}A)^{\vee}\boxtimes
      p_{I!}A)^{\vee}\\
    (p_{I^{\op}*}A^{\vee})^{\vee}\boxtimes (p_{I!}A)^{\vee}\ar[u]^{\overline{\Psi}}\ar[r]^-{\Xi}&(p_{I^{\op}*}A^{\vee}\boxtimes
      p_{I!}A)^{\vee}\ar[u]_{\overline{\Psi}}\\
    p_{I!}(A^{\vee\vee})\boxtimes (p_{I!}A)^{\vee}\ar[u]^{\overline{\Psi}}&(p_{I^{\op}*}(A^{\vee}\boxtimes
      p_{I!}A))^{\vee}\ar[u]\\
    p_{I!}(A^{\vee\vee}\boxtimes (p_{I!}A)^{\vee})\ar[u]\ar[r]^-{\Xi}\ar[d]&p_{I!}(A^{\vee}\boxtimes
    p_{I!}A)^{\vee}\ar[u]_{\overline{\Psi}}\ar[d]\\
    p_{I!}(A^{\vee\vee}\boxtimes
    p_{I^{\op}*}A^{\vee})\ar[d]&p_{I!}(A^{\vee}\otimes p_{1!}p_{2}^{*}A)^{\vee}\ar[d]\\
    p_{I!}(A^{\vee\vee}\otimes
    p_{1*}'p_{2}'^{*}A^{\vee})\ar[d]&p_{I!}(p_{1!}(A^{\vee}\boxtimes
      A))^{\vee}\ar[d]^{\overline{\Psi}}\\
  \ar[r]^-{\Xi}p_{I!}p_{2*}\mu_{*}(A^{\vee\vee}\boxtimes
  A^{\vee})&p_{I!}p_{2*}\mu_{*}(A^{\vee}\boxtimes A)^{\vee}\mathrlap{,}}
  \end{equation*}
  everything commutes by the external product property of
  section~\ref{sec:ehom} (and adjunction). All vertical arrows are
  invertible. 

  It remains to prove the commutativity of \numbercircled{7}. In the
  diagram
  \begin{equation*}
    \xymatrix{\ehm{(p_{I!}A)^{*}\otimes
        p_{I!}A}{\one}\ar[r]^-{\mathrm{coev}}&\ehm{\one}{\one}\ar[r]&\one\\
\ar[u]      \ehm{p_{I^{\op}*}p_{1!}(A^{\vee}\boxtimes
        A)}{\one}\ar[d]_{\overline{\Psi}}\ar[r]^-{\mathrm{coev}}&\ar[u]\ehm{p_{I^{\op}*}p_{1!}q_{2!}\one}{\one}\ar[d]_{\overline{\Psi}}&p_{I!}p_{2*}q_{1*}q_{1}^{*}p_{2}^{*}p_{I}^{*}\ehm{\one}{\one}\ar[lu]\ar[d]^{\Psi}_{\sim}\\
      p_{I!}p_{2*}\mu_{*}\ehm{A^{\vee}\boxtimes
        A}{\one}\ar[r]^{\mathrm{coev}}&p_{I!}p_{2*}\mu_{*}\ehm{q_{2!}\one}{\one}\ar[r]^{\overline{\Psi}} &p_{I!}p_{2*}q_{1*}\ehm{\one}{\one}\mathrlap{,}}
  \end{equation*}
  the top left square is simply $\ehm{\plho}{\one}$ applied to the left
  square in~(\ref{eq:colim-schema}). It follows that this square is
  commutative. Moreover it is easy to see that the composition of the
  left vertical arrows is the same as of the ones in
  \numbercircled{7}. Thus this diagram is a decomposition
  of~\numbercircled{7}. The rest of the diagram clearly commutes.
\end{proof}
\section{$\D(G)$ for $G$ a finite group}
\label{sec:group}
The question, given a category $I$, whether $I$-diagrams and morphisms
of such in the homotopy categories can be lifted (and if so whether
uniquely) to the homotopy categories of $I$-diagrams has always been
of interest (see \eg{}~\cite[chapitre~IV]{grothendieck-derivators}
or~\cite[p.~2]{heller-homotopytheories}). The goal of this last section
is to give a proof for the (well-known) answer in the case of $I$ a
finite group.
\begin{pro}\label{pro:finite-group}
  Let $\D$ be an additive derivator of type $\mathbf{Dia}$, let $G$ be
  a finite group in $\mathbf{Dia}$ and assume that $\#G$ is invertible
  in $R_{\D}$. Then the canonical functor
  \begin{equation*}
    \mathrm{dia}_{G}:\D(G)\to \mathbf{CAT}(G^{\op},\D(\pt))
  \end{equation*}
  is fully faithful. If, in addition, $\D(G)$ is pseudo-abelian then
  the functor is an equivalence of categories.
\end{pro}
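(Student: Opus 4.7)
Let $e: \pt \to G$ denote the unique functor. I note first that, by axiom (D4) applied to the two comma squares with apex $\bullet \in G$, together with finiteness of $G$ and additivity of $\D$ (under which finite products and coproducts coincide), there are canonical isomorphisms
\[
e^*e_\# \;\cong\; \bigoplus_{G}(-) \;\cong\; e^*e_*
\]
of endofunctors of $\D(\pt)$; axiom (D2) gives that $e^*$ is conservative.

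The central construction is the \emph{norm map} $N: e_\# \to e_*$, defined as the mate of the unit $\mathrm{id} \to e^*e_*$ under $e_\# \dashv e^*$, i.e., as the composite $e_\# \to e_\# e^* e_* \to e_*$ whose second arrow is the counit of $e_\# \dashv e^*$ applied at $e_*$. Under the identifications above, $e^*N$ can be computed explicitly and becomes an isomorphism precisely when $\#G$ is invertible in $R_{\D}$; this is the derivator shadow of the classical fact that $R[G]$ is Frobenius with norm element invertible in characteristic prime to $\#G$. Conservativity of $e^*$ then upgrades this to an isomorphism $N$ in $\D(G)$.

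Full faithfulness is deduced as follows. For $A, B \in \D(G)$, $\mathrm{dia}_G$ on hom sets factors as
\[
\D(G)(A, B) \xrightarrow{(\eta_B)_*} \D(G)(A, e_*e^*B) \xrightarrow{\sim} \D(\pt)(e^*A, e^*B),
\]
the second map being the adjunction isomorphism $e^* \dashv e_*$. Using $N^{-1}$, I will construct a retraction $\rho_B: e_*e^*B \to B$ of the unit $\eta_B$ as the composite $e_*e^*B \xrightarrow{N^{-1}} e_\# e^* B \xrightarrow{\alpha_B} B$, where $\alpha$ is the counit of $e_\# \dashv e^*$; the identity $\rho_B \eta_B = \mathrm{id}_B$ then follows from the standard zigzag relations. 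Faithfulness is immediate from $\eta_B$ being a split monomorphism. For fullness, I check directly that given a $G$-equivariant $\phi: e^*A \to e^*B$ with adjunct $\phi^\sharp: A \to e_*e^*B$, the composite $\tilde\phi := \rho_B \circ \phi^\sharp$ satisfies $e^*\tilde\phi = \phi$: the equivariance of $\phi$ is exactly what is needed for the averaging implicit in $\rho_B$ to recover $\phi$ itself rather than a twisted variant.

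For essential surjectivity under the pseudo-abelianness hypothesis, given $F \in \mathbf{CAT}(G^{\op}, \D(\pt))$ with $M := F(\bullet)$ and action $\rho_g := F(g)$, I exhibit $F$ as the image of an idempotent of $\mathrm{dia}_G(e_\# M)$. Concretely, I define $\pi \in \mathrm{End}_{\D(G)}(e_\# M)$ as the morphism corresponding, under the adjunction
\[
\mathrm{End}_{\D(G)}(e_\# M) \;\cong\; \D(\pt)(M, \bigoplus_{G} M),
\]
to the tuple of components $(\tfrac{1}{\#G}\rho_{g^{-1}})_{g \in G}$. A direct calculation (transparent in the $R[G]$-module prototype) shows $\pi$ is idempotent and that its underlying image is $F$; pseudo-abelianness of $\D(G)$ then produces $A := \mathrm{Im}(\pi) \in \D(G)$ with $\mathrm{dia}_G(A) \cong F$, and the full faithfulness established above promotes this underlying-level isomorphism to one in $\mathbf{CAT}(G^{\op}, \D(\pt))$.

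The main obstacle is the abstract verification that $N$ is an isomorphism when $\#G$ is invertible in $R_{\D}$. Although elementary in the module prototype, in the derivator setting it must be extracted solely from the axioms, by tracking $N$ at the underlying level through the identifications of $e^*e_\#$ and $e^*e_*$ with $\bigoplus_G (-)$ furnished by (D4), and exploiting the $R_\D$-linear structure of section~\ref{sec:linder}. Everything else in the argument — the retraction $\rho_B$, the idempotent $\pi$, and the identification of its image — is then either formal or a routine verification that transfers from the prototype by universality.
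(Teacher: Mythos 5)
Your architecture is the same as the paper's (identify $e^{*}e_{\#}$ and $e^{*}e_{*}$ with $\bigoplus_{G}$, invert a norm map $e_{\#}\to e_{*}$, split the unit $B\to e_{*}e^{*}B$, then produce idempotents for essential surjectivity), but there is a genuine error in where the hypothesis $\#G\in R_{\D}^{\times}$ enters, and it breaks the key step as written. The norm map $N\colon e_{\#}\to e_{*}$ is an isomorphism for \emph{every} finite $G$, with no condition on $\#G$: under the identifications furnished by (D4), $e^{*}N$ is the coordinate permutation $(x_{h})_{h}\mapsto (x_{g^{-1}})_{g}$ (this is the paper's computation, and it matches the module prototype, where $R[G]\otimes_{R}M\cong\hm_{R}(R[G],M)$ holds over any $R$ — the Frobenius property needs no invertibility of $\#G$). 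So your ``becomes an isomorphism precisely when $\#G$ is invertible'' is false, and the place where the hypothesis is actually needed is precisely the step you dismiss as formal.

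Concretely, $\rho_{B}\eta_{B}=\mathrm{id}_{B}$ does \emph{not} follow from the triangle identities: those govern a single adjunction, whereas $\rho_{B}\eta_{B}$ composes the unit of $(e^{*},e_{*})$ with $N^{-1}$ and the counit of $(e_{\#},e^{*})$. Computed at the underlying level it is $x\mapsto(g^{*}x)_{g}\mapsto((g^{-1})^{*}x)_{g}\mapsto\sum_{g}g^{*}(g^{-1})^{*}x=\#G\cdot x$, i.e.\ $\rho_{B}\eta_{B}=\#G\cdot\mathrm{id}_{B}$. You must rescale $\rho_{B}$ by $1/\#G$ using the $R_{\D}$-linear structure of \ref{sec:linder}; only then is $\eta_{B}$ split and the retract argument for full faithfulness goes through (you also still owe the identification of $\D(G)(A,e_{*}e^{*}B)\to\mathbf{CAT}(G^{\op},\D(\pt))(\mathrm{dia}_{G}A,\mathrm{dia}_{G}e_{*}e^{*}B)$ with a bijection, which requires the explicit description of the unit and of the $G$-actions as left/right regular representations — this is not automatic from equivariance of $\phi$ alone). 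Your essential-surjectivity step is sound in outline and mirrors the paper's (the paper averages on $e_{*}M$ rather than $e_{\#}M$, which is immaterial once $N$ is inverted), provided you note that verifying $\pi^{2}=\pi$ in $\D(G)$ uses the faithfulness just established.
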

\begin{rem}
  Suppose that $\D$ is triangulated and that $\mathbf{Dia}$ contains
  countable discrete categories. In this case $\D(G)$ has countable
  direct sums, and it follows from~\cite[1.6.8]{neeman01-trcat} that
  $\D(G)$ is pseudo-abelian.
\end{rem}
\begin{proof}[Proof of Proposition~\ref{pro:finite-group}]
  We need to understand the two adjunctions $e_{!}\dashv e^{*}$ and
  $e^{*}\dashv e_{*}$ where $e:\pt\to G$ is the unique functor.

  Consider the following comma square where $\eta$ on the component
  corresponding to $g\in G$ is $g$:
  \begin{equation*}
    \xymatrix{\xtwocell[1,1]{}\omit{^{\eta}}\coprod_{G}\pt\ar[d]_{p}\ar[r]^-{p}&\pt\ar[d]^{e}\\
      \pt\ar[r]_-{e}&G\mathrlap{,}}
  \end{equation*}
  By~\ref{der:kanextptwise}, the two compositions
  \begin{align*}
    \xymatrix@R=5pt{p_{!}p^{*}\ar[r]^-{\mathrm{adj}}&p_{!}p^{*}e^{*}e_{!}\ar[r]^{\eta^{*}}&p_{!}p^{*}e^{*}e_{!}\ar[r]^-{\mathrm{adj}}&e^{*}e_{!},\\
      p_{*}p^{*}&p_{*}p^{*}e^{*}e_{*}\ar[l]^-{\mathrm{adj}}&p_{*}p^{*}e^{*}e_{*}\ar[l]^{\eta^{*}}&e^{*}e_{*}\ar[l]^-{\mathrm{adj}}}
  \end{align*}
  are invertible, yielding identifications
  \begin{align*}
    e^{*}e_{!}\cong {\textstyle\coprod_{G}}, && e^{*}e_{*}\cong {\textstyle\prod_{G}},
  \end{align*}
  and therefore a canonical morphism $e^{*}e_{!}\to e^{*}e_{*}$ which
  is invertible if $G$ is finite.

  Under these identifications the (contravariant) action of $G$ on
  $e^{*}e_{!}$ (obtained by applying $\mathrm{dia}_{G}$ to $e_{!}$)
  is given by right translation, and on $e^{*}e_{*}$ by left
  translation. Indeed, let $A\in \D(\pt)_{0}$ be an arbitrary object
  and set $B=e^{*}e_{!}A$, fix also $g\in G$. Then the following
  diagram commutes where $r_{g}((x_{h})_{h})=(x_{h})_{hg}$:
  \begin{align*}
    \xymatrix{\coprod_{h\in G}A\ar[r]^-{\mathrm{adj}}\ar[d]_{r_{g}}&\coprod_{h\in
        G}B\ar[r]^{\coprod_{h}h^{*}}\ar[d]_{r_{g}}&\coprod_{h\in
        G}B\ar[r]^-{\sum}&B\ar[d]^{g^{*}}\\
      \coprod_{h\in G}A\ar[r]_-{\mathrm{adj}}&\coprod_{h\in G}B\ar[r]_{\coprod_{h}h^{*}}&\coprod_{h\in G}B\ar[r]_-{\sum}&B\mathrlap{.}}
  \end{align*}
  Thus the claim in the case of $e^{*}e_{!}$; the case of
  $e^{*}e_{*}$ is proved in a similar way.

  Next, we would like to describe the units and counits of the
  adjunctions. We first deal with the unit of $e_{!}\dashv
  e^{*}$. Let $i:\pt\to \coprod_{G}\pt$ be the inclusion of the
  component corresponding to $\unit_{G}$.
  \begin{equation*}
    \xymatrix{p_{!}p^{*}\ar[r]^-{\mathrm{adj}}&p_{!}p^{*}e^{*}e_{!}\ar[r]^{\eta^{*}}&p_{!}p^{*}e^{*}e_{!}\ar[dd]^-{\mathrm{adj}}\\
      \ar[d]_{\sim}p_{!}i_{!}i^{*}p^{*}\ar[u]^{\mathrm{adj}}\ar[r]^-{\mathrm{adj}}&p_{!}i_{!}i^{*}p^{*}e^{*}e_{!}\ar[u]^{\mathrm{adj}}\ar[dr]^-{\sim}\ar[ru]^{\mathrm{adj}}\\
      \Id\ar[rr]_-{\mathrm{adj}}&&e^{*}e_{!}\mathrlap{.}}
  \end{equation*}
  The diagram clearly commutes and hence the unit $\Id\to e^{*}e_{!}$
  is given by the inclusion of the unit component into
  $\coprod_{G}$. Similarly, the counit $e^{*}e_{*}\to\Id$ is the
  projection onto the component corresponding to
  $\unit_{G}$.

  Next, we want to describe the other two (co)units (at least after
  applying $e^{*}$). For this consider the composition of the unit and
  the counit of the adjunction,
  \begin{equation*}
    e^{*}\to {\textstyle\coprod_{G}}e^{*}\to e^{*},
  \end{equation*}
  which we know to be the identity. By the description of the first
  morphism above we see that the $\unit_{G}$-component of the second
  morphism has to be the identity. But this second morphism is also
  $G$-equivariant so the description of the $G$-action above implies
  that the morphism is the action of $g$ on the $g$-component for any
  $g\in G$. Similarly, the counit $e^{*}\to \prod_{G}e^{*}$ is given
  by the action of $g$ on the $g$-component.

  We now have enough information to describe the composition
  \begin{equation*}
    \xi:e_{!}\xrightarrow{\mathrm{adj}} e_{*}e^{*}e_{!}\to e_{*}e^{*}e_{*}\xrightarrow{\mathrm{adj}} e_{*}
  \end{equation*}
  after applying $e^{*}$. Indeed, it can then be identified with the
  following one:
  \begin{align*}
    \xymatrix@R=1pt{\coprod_{G}\ar[r]&
    \prod_{G}\coprod_{G}\ar[r]&\prod_{G}\prod_{G}\ar[r]&\prod_{G}\\
    (x_{h})_{h}\ar@{|->}[r]&((x_{hg^{-1}})_{h})_{g}\ar@{|->}[r]&
    ((x_{hg^{-1}})_{h})_{g}\ar@{|->}[r]& (x_{g^{-1}})_{g}\mathrlap{.}}
  \end{align*}
  Since this morphism is invertible and $e^{*}$ conservative
  (by~\ref{der:conservative}), also $\xi$ is invertible, and it thus
  makes sense to consider the composition
  \begin{align}\label{eq:mult-G}
    \Id\to e_{*}e^{*}\xrightarrow{\xi^{-1}}e_{!}e^{*}\to \Id.
  \end{align}
  After applying $e^{*}$ it can be identified with
  \begin{align*}
    \xymatrix@R=1pt{e^{*}\ar[r]&\prod_{G}e^{*}\ar[r]&\coprod_{G}e^{*}\ar[r]& e^{*}\\
    x\ar@{|->}[r]& (g^{*}x)_{g}\ar@{|->}[r]& ((g^{-1})^{*}x)_{g}\ar@{|->}[r]&
    \sum_{g\in G}g^{*}(g^{-1})^{*}x=\#G\cdot x\mathrlap{.}}
  \end{align*}

  If $\#G$ is invertible in $R_{\D}$ then this morphism and (again,
  by~\ref{der:conservative}) also~(\ref{eq:mult-G}) is invertible, in
  particular there is, for every $B\in\D(G)_{0}$, a factorization of
  the identity morphism of $B$: $B\to e_{*}e^{*}B\to B$. For any
  $A\in\D(G)_{0}$, this factorization in turn induces the horizontal
  arrows in the following commutative diagram (${\cal
    C}=\mathbf{CAT}(G^{\op},\D(\pt))$, $d=\mathrm{dia}_{G}$):
  \begin{equation*}
    \xymatrix{\D(G)(A,B)\ar[r]\ar[d]_{d}&\D(G)(A,e_{*}e^{*}B)\ar[r]\ar[d]_{d}&\D(G)(A,B)\ar[d]^{d}\\
      {\cal C}(dA,dB)\ar[r]&{\cal C}(dA,de_{*}e^{*}B)\ar[r]&{\cal C}(dA,dB)\mathrlap{.}}
  \end{equation*}
  The first top horizontal arrow is injective hence if the middle
  vertical arrow is injective then so is the left vertical
  one. Similarly, the second bottom horizontal arrow is surjective
  hence if the middle vertical arrow is surjective then so is the
  right vertical one. Consequently, to prove fully faithfulness of
  $\mathrm{dia}_{G}$ it suffices to prove bijective the middle
  vertical arrow (for all $A$ and $B$). Now, the source of this map
  can be identified with $\D(\pt)(e^{*}A,e^{*}B)$ by adjunction, while
  the target is the set of $G^{\op}$-morphisms in $\D(\pt)$ from
  $e^{*}A$ to the left regular representation associated to
  $e^{*}B$ --- which is also $\D(\pt)(e^{*}A,e^{*}B)$.

  It remains to show essential surjectivity of
  $\mathrm{dia}_{G}$. Given an object $A\in\D(\pt)_{0}$ with a
  $G^{\op}$-action $\rho$, consider the two morphisms
  \begin{equation*}
    \xymatrix@R=1pt{A\ar[r]^-{\alpha}&\prod_{G}A&\text{and}&\prod_{G}A\ar[r]^-{\beta}&A\\
      x\ar@{|->}[r]&(\rho(g)x)_{g}&&(x_{g})_{g}\ar@{|->}[r]&\frac{1}{\#G}\sum_{g\in
      G}\rho(g^{-1})x_{g}\mathrlap{.}}
  \end{equation*}
  They give rise to a $G^{\op}$-equivariant decomposition of the
  identity on $A$:
  \begin{equation*}
    \catid_{A}:A\xrightarrow{\alpha}\mathrm{dia}_{G}(e_{*}A)\xrightarrow{\beta}A.
  \end{equation*}
  By fullness of $\mathrm{dia}_{G}$ proved above, there exists
  $p\in\D(G)(e_{*}A,e_{*}A)$ with
  $\mathrm{dia}_{G}(p)=\alpha\beta$. By faithfulness also proved
  above, the equality
  \begin{equation*}
    \mathrm{dia}_{G}(p^{2})=\mathrm{dia}_{G}(p)^{2}=(\alpha\beta)^{2}=\alpha\beta=\mathrm{dia}_{G}(p)
  \end{equation*}
  implies that $p$ is a projector, and therefore if $\D(G)$ is
  pseudo-abelian then there is a decomposition
  \begin{equation*}
    e_{*}A=\ker(p)\oplus \im(p).
  \end{equation*}
  Let $\alpha':\im(p)\to e_{*}A$ be the inclusion, and
  $\beta':e_{*}A\to\im(p)$ the projection. Then
  \begin{align*}
    (\mathrm{dia}_{G}(\beta')\alpha)(\beta\mathrm{dia}_{G}(\alpha'))&=\mathrm{dia}_{G}(\beta')\mathrm{dia}_{G}(p)\mathrm{dia}_{G}(\alpha')\\
    &=\mathrm{dia}_{G}(\beta'p\alpha')\\
    &=\mathrm{dia}_{G}(\catid_{\im(p)})\\
    &=\catid_{\mathrm{dia}_{G}(\im(p))},\intertext{and}
    (\beta\mathrm{dia}_{G}(\alpha'))(\mathrm{dia}_{G}(\beta')\alpha)&=\beta\mathrm{dia}_{G}(\alpha'\beta')\alpha\\
    &=\beta \mathrm{dia}_{G}(p)\alpha\\
    &=\beta\alpha\beta\alpha\\
    &=\catid_{A}.
  \end{align*}
  We conclude that $A\cong \mathrm{dia}_{G}(\im(p))$.
\end{proof}
\phantomsection
\addcontentsline{toc}{section}{References}
\printbibliography{}
\end{document}